\documentclass[10pt]{article}
\usepackage{amsfonts}
\usepackage{amsmath}
\usepackage{amssymb}
\usepackage{enumitem}
\allowdisplaybreaks
\usepackage[T1]{fontenc}
\DeclareMathAlphabet{\mathdutchcal}{U}{dutchcal}{m}{n}
\usepackage{cancel}
\usepackage[all]{xy}
\usepackage{mathalfa}
\usepackage[mathscr]{euscript}

\usepackage[colorlinks=true, pdfstartview=FitV, linkcolor=blue, citecolor=blue, urlcolor=blue]{hyperref}
\usepackage{xcolor}
\usepackage{setspace}
\usepackage[a4paper, left=1.2in, right=1.2in, top=1in, bottom=1in]{geometry}
\numberwithin{equation}{section}
\newtheorem{Thm}{Theorem}[section]
\newtheorem{Cor}{Corollary}[section]
\newtheorem{Def}{Definition}[section]
\newtheorem{Lemma}{Lemma}[section]
\newtheorem{Prop}{Proposition}[section]
\newtheorem{Remark}{Remark}[section]
\newtheorem{Example}{Example}[section]
\def\N{\mathbb{N}}
\def\Z{\mathbb{Z}}

\def\R{\mathbb{R}}
\def\C{\mathbb{C}}

\def\fd{\hfill$\square$}
\newcommand{\prs}[2]{\mathop{\langle#1,#2\rangle}}
\date{}
\begin{document}
\title{\emph{The Fractional Dunkl Laplacian: Extension Problem and Fundamental Solution}}
\author
{{\small\bf Chaabane REJEB}\footnote{Universit\'{e} de Sherbrooke, Sherbrooke, Qu\'{e}bec, CANADA.
Email: chaabane.rejeb@usherbrooke.ca, chaabane.rejeb@gmail.com}}
\maketitle

\begin{abstract}
Consider the Dunkl Laplacian $\Delta_k$ associated with a root system $\Phi$ in $\R^d$ and a nonnegative multiplicity function $k$ on $\Phi$. In this paper, we establish a Caffarelli-Silvestre characterization  for the fractional Dunkl Laplacian through an extension problem. We also express the corresponding fundamental solution in terms of the $\Delta_k$-Riesz kernel and prove a fractional Nash-type inequality.

\bigskip\noindent MSC (2020): 26A33, 47G20, 35R11, 35S05.

\bigskip\noindent Key words: \scriptsize{Dunkl Laplacian, Fractional Dunkl Laplacian,  Extension problem,  Riesz kernel and potential.}
\end{abstract}
\begin{center}
 \section{Introduction}
 \end{center}

The fractional Laplacian $(-\Delta)^{\alpha}$ on $\R^d$, $0<\alpha<1$,  is one of the most prominent representatives of the family of nonlocal operators. It arises in many  areas, including  diffusion phenomena, optimization, probability,  finance and others. In their seminal work \cite{Caffarelli},  Caffarelli and Silvestre gave a characterization of  the fractional Laplacian as the Dirichlet-to-Neumann map for an extension problem to the half-space $(0,+\infty)\times\R^d$. This point of view allows in somewhat to avoid the nonlocal property of the fractional Laplacian by reducing it to a local operator acting on functions defined on the cylinder $(0,+\infty)\times\R^d$.\\
In addition, the Caffarelli-Silvestre extension technique turns out to be very fruitful for recovering  an elliptic PDE approach in a nonlocal framework and  has since  been applied in numerous works (see, for instance, \cite{Caffarelli prop, Garofalo-FT, Silvestre, Stinga thesis} and  the references therein).

\medskip

The Dunkl Laplacian $\Delta_k$  is a deformation of the usual Laplace operator with a differential-difference additional terms that involve the action of a finite Coxeter group, i.e.,  a finite Euclidean reflection group, (see (\ref{laplacian}) below). Extensive  studies have been carried out on analysis associated with the Dunkl Laplacian (see for instance, \cite{dunklxu, GalRejNote, GalRej, GalRej2, GalRej3, GalRejSifi, GLR,  Rejeb25-MV, Rejeb21,  Rosler3, Rosler4,  Rosler2}). \\
It generates a  Markovian $\mathcal{C}_0$-semigroup $\left\{e^{t\Delta_k}: t\geq0\right\}$ which  can be regarded as a generalization of the usual multidimensional Gaussian semigroup (see \cite{Rosler3, Rosler-Voit, Rosler4} and Appendix A.3).
This fact together with the standard Bochner subordination principle lead to define the $\alpha$-fractional Dunkl heat semigroup, $0<\alpha<1$, whose infinitesimal generator is the operator $-(-\Delta_k)^{\alpha}$.

\medskip

The  main  purpose of this paper is to investigate  results for the fractional Dunkl Laplacian. More precisely, we provide a  heat semigroup and a pseudodifferential representation formula, derive a fundamental solution and a fractional heat kernel, establish a fractional Nash inequality and obtain a Caffarelli-Silvestre extension problem characterization.\\
We now proceed to outline the main elements of our setting. Consider a Coxeter root system $\Phi$ in $\R^d$, i.e.,  $\Phi$ is a finite subset of $\R^d\setminus\{0\}$ such that for every $\upsilon\in \Phi$, $\Phi\cap\R\upsilon=\{\pm\upsilon\}$ and $\sigma_{\upsilon}(\Phi)=\Phi$, where $\sigma_{\upsilon}$ is the Euclidean reflection with respect to the hyperplane $H_{\upsilon}$ orthogonal to $\upsilon$.  Let $W=W(\Phi)$ be the finite Coxeter group generated by the reflections $\sigma_{\upsilon}$, $\upsilon\in\Phi$.
For more detailed treatment on the topics of root systems and their reflection groups, we refer to \cite{humph,Kane}. \\
The root system $\Phi$ can be partitioned into disjoint orbits of the roots under the action of the group $W$, and  we may assign a parameter $k$, called a multiplicity, to each orbit. This defines a multiplicity function $k$ on $\Phi$.\\
Throughout this paper, the multiplicity function $k$ is assumed to be nonnegative. We also introduce the sum of multiplicities
\begin{equation}\label{gamma}
\textstyle2\gamma:=\sum_{\upsilon\in \Phi}k(\upsilon)
\end{equation}
and the weight function $\omega_k$
\begin{equation}\label{weight function}
\textstyle\omega_k(x):=\prod_{\upsilon\in \Phi}|\prs{\upsilon}{x}|^{k(\upsilon)}
\end{equation}
which is both $W$-invariant and homogeneous of degree $2\gamma$.\\
The  Dunkl Laplacian $\Delta_k$ associated with the pair $(\Phi,k)$ is defined by \cite{Dunkl1, dunklxu}\\
\begin{equation}\label{laplacian}
\Delta_kf(x)= \Delta f(x)+\sum_{\upsilon\in \Phi}k(\upsilon)\Big(\frac{\prs{\nabla f(x)}{\upsilon}}{\prs{\upsilon}{x}}-\frac{\|\upsilon\|^2}{2}.\frac{f(x)-f(\sigma_{\upsilon}(x))}{\prs{\upsilon}{x}^2}\Big),\quad f\in \mathcal{C}^2(\R^d),
\end{equation}
where $\Delta$ and $\nabla$ are the usual Laplacian and gradient, respectively.\\
According to \cite{Dunkl1}, the operator $\Delta_k$  can be expressed as the sum of the square of  Dunkl operators corresponding to the canonical basis $(e_j)_{1\leq j\leq d}$ of $\R^d$. Namely, $\Delta_k=\sum_{j=1}^dD_{e_j}^2$, where for $\xi\in\R^d$, $D_{\xi}$ is the $\xi$-directional Dunkl operator acting on $C^1$-functions by
\begin{equation}\label{Dunkl operator}
D_{\xi}f(x):=\partial_{\xi}f(x)+\sum_{\upsilon\in\Phi}k(\upsilon)\frac{\prs{\upsilon}{\xi}}{2}.\frac{f(x)-f(\sigma_\upsilon.x)}{\prs{\upsilon}{x}}.
\end{equation}
On the other hand, it is  known that  the Dunkl Laplacian is related to the usual one via the intertwining relation
\begin{equation}\label{Intertwining relation}
\Delta_kV_k=V_k\Delta,
\end{equation}
where $V_k$ is the so-called  Dunkl intertwining operator \cite{Dunkl2,dunklxu}. It is characterized as the unique linear isomorphism on the space $\mathdutchcal{P}(\R^d)$ of polynomial functions on $\R^d$ which preserves the degree of homogeneity and satisfies $V_k(1)=1$ and $D_{\xi}V_k=V_k\partial_{\xi}$, for all $\xi\in\R^d$.   In \cite{Trimeche1}, this operator $V_k$ was further extended to a topological isomorphism from the space $\mathcal{C}^{\infty}(\R^d)$ onto itself.\\
Moreover, one of the key feature of the Dunkl intertwining operator  is that it admits a Laplace integral representation of the form
\begin{equation}\label{Vk measure}
 V_k(f)(x)=\int_{\R^d}f(y)d\mu_x(y),\quad\quad \quad   f\in\mathcal{C}^{\infty}(\R^d),
\end{equation}
with a unique compactly supported probability measure $\mu_x=\mu_x^{k,W}$ on $\R^d$. This result has been established by R\"{o}sler in \cite{Rosler1}.  The support of the measure $\mu_x$ is contained in $C(x)$, the convex hull of the $W$-orbit of $x$ \cite{Dejeu} and always contains the point $x$ \cite{GalRej1}. In addition, it is shown in \cite{GalRej1} that if the multiplicity function $k$ is positive, then the support of $\mu_x$ is invariant under the  action of the Coxeter group $W$.\\
Note that, at present, there is no known general closed-form formula for the measures $\mu_x^{k,W}$,  apart from a  few special cases of Coxeter groups.\\

The paper is organized as follows. Section 2 is devoted to  examining several properties of the fractional Dunkl Laplacian via the semigroup approach. In particular,  a fundamental solution and a fractional Nash-type inequality are obtained. In Section 3,  we study the $\Delta_k$-extension problem by using a fractional Poisson kernel and  establish the Caffarelli-Silvestre harmonic characterization for the fractional Dunkl Laplacian.
To make the paper as self-contained as possible, we recall in the Appendix  some essential elements of Dunkl analysis, including the Dunkl integral transform and its kernel (called the Dunkl kernel), Dunkl translation and convolution, and the Dunkl heat kernel.

\begin{Remark}
Relationship to prior work: \emph{A preliminary version of part of the present results appeared in my HAL preprint \cite{Rejeb-21}.
Subsequently, \cite{FW} studied  some properties of the fractional Dunkl Laplacian; as noted by the authors in  the published article, my preprint predates their work and the approach taken there is different. The present  submitted version also includes also new results, such as a generalized fractional Nash inequality.}
\end{Remark}

\bigskip

 \textbf{Notations:} We summarize here some notations used frequently in the paper.
\begin{itemize}[labelsep=0.5em,left=0em]
\item For any expressions $X$ and $Y$, the notation $X\simeq Y$ (resp. $X\gtrsim Y$, $X\lesssim Y$, $X\asymp Y$)  indicates  $X=CY$ (resp. $X\geq CY$, $X\leq CY$, $C^{-1}X\leq Y\leq CY$) for some positive constant $C$ independent of significant quantities.
\item For $1\leq p\leq +\infty$, $L^p_k(\R^d)$ denotes  the Lebesgue spaces with respect to the measure $\omega_k(x)dx$. Its norm is denoted by $\|\cdot\|_{L^p_{k}(\R^d)}$.
The scalar product in the Hilbert space $L^2_k(\R^d)$ will be denoted by ${\prs{\cdot}{\cdot}}_{L^2_k(\R^d)}$ with
$$
{\prs{f}{g}}_{L^2_k(\R^d)}:=\int_{\R^d}f(x)\overline{g(x)}\omega_k(x)dx,\quad f,g\in L^2_k(\R^d).
$$
Let $L^1_{k,loc}(\R^d):=L^1_{loc}(\R^d,\omega_k(x)dx)$ be the space of locally integrable functions on $\R^d$ w.r.t. the measure $\omega_k(x)dx$.
As usual, the spaces $\mathcal{S}(\R^d)$ and $\mathcal{S}'(\R^d)$ are the Schwartz space and the space of tempered distributions, respectively.
Finally, for an open set $\Omega\subset \R^d$, $\mathcal{D}(\Omega)$ denotes $\mathcal{C}_c^{\infty}(\Omega)$ and $\mathcal{D}'(\Omega)$ its topological dual.
\end{itemize}
\section{Properties of the fractional Dunkl Laplacian}
\subsection{The fractional Dunkl Laplacian via semigroup theory}
 In this section, we focus on some properties of the fractional Dunkl Laplacian. The starting point is to define the operator $-(-\Delta_k)^{\alpha}$, $0<\alpha<1$, as the infinitesimal generator of a specific  semigroup obtained by subordination.\\
Bochner's subordination for bounded strongly continuous semigroups on Banach spaces provides a technique to construct  a new semigroup  from a given one  by using a convolution semigroup of sub-probability measures \cite{Bochner, Phillips, Schilling}. As a reference for the theory of strongly continuous semigroups on Banach spaces, we refer to
the monograph \cite{Davies}.\\
It is known that every family $\big\{\mu_t\big\}_{t\geq0}$ of vaguely continuous semigroup of sub-probability measures on $[0,+\infty)$ is naturally associated (via Laplace transform) to a unique Bernstein function and conversely \cite{Schilling} (Theorem 5.2).\\
The fractional powers $\lambda\longmapsto \lambda^{\alpha}$, $\lambda>0$, $\alpha\in (0,1)$, are among the most prominent (complete) Bernstein functions. They have the following L\'{e}vy-Kintchine representation:
\begin{equation}\label{lambda puissance}
\lambda^{\alpha}=\frac{1}{\Gamma(-\alpha)}\int_0^{\infty}\big(e^{-t\lambda}-1\big)\frac{dt}{t^{\alpha+1}},\quad\quad \quad   \forall\ \lambda>0,
\end{equation}
with $\Gamma(-\alpha):=-\Gamma(1-\alpha)/\alpha$.\\
For fixed $\alpha\in(0,1)$, let $\left\{\eta_{\alpha,t}(s)ds\right\}_{t\geq0}$ be  the unique vaguely continuous  semigroup of absolutely continuous probability measures  on $(0,+\infty)$, called the subordinator of index $\alpha$,  such that
\begin{equation}\label{subordinator laplace trans}
 e^{-t\lambda^{\alpha}}=\int_0^{\infty}e^{-s\lambda}\eta_{\alpha,t}(s)ds.
\end{equation}
According to  \cite{Rosler3, Rosler4}, we know that  the Dunkl heat semigroup $\big\{e^{t\Delta_k}:\ t\geq0\big\}$ is a strongly continuous  contraction semigroup on $X$, where $X$ is one of the Banach spaces $L^p_k(\R^d)$, with $1\leq p<+\infty$,  $\mathcal{C}_0(\R^d)$,  or $\mathcal{C}_b(\R^d)$.\\
Therefore, by the subordination principle,   formula (\ref{subordinator laplace trans}) leads to construct a new strongly continuous contraction semigroup $H_{k,\alpha}:=\big\{H_{k,\alpha}(t):\ t\geq0\big\}$ on the same Banach spaces $X$  as follows:
\begin{equation}\label{frac heat sgp}
H_{k,\alpha}(t) f:=\int_0^{\infty}e^{s\Delta_k}f\ \eta_{\alpha,t}(s)ds, \quad H_{k,\alpha}(0)f=f,\quad f\in X.
\end{equation}
We refer to $H_{k,\alpha}$ as the $\alpha$-fractional Dunkl heat semigroup. When $k\equiv 0$, this semigroup coincides with the usual fractional heat semigroup.\\
On the other hand, the kernel of the semigroup $\big\{H_{k,\alpha}(t):\ t\geq0\big\}$, called the fractional Dunkl heat kernel, is given  by
\begin{equation}\label{subrdinator kernel}
p_{k,\alpha}(t,x,y):=\int_0^{\infty}p_k(s,x,y)\eta_{\alpha,t}(s)ds, \quad\quad  x,y\in\R^d,\quad t>0.
\end{equation}
Here, $p_k(t,x,y)$ is the Dunkl heat kernel (see \cite{Rosler3, Rosler4} and Appendix A.3 for more details)
\begin{equation}\label{heat kernel 1 Intr}
p_k(t,x,y):=\frac{1}{(2t)^{d/2+\gamma}c_k}\int_{\R^d}e^{-\frac{1}{4t}\left(d(x,y,z)\right)^2}d\mu_y(z),
\end{equation}
where
\begin{equation}\label{A function}
 d(x,y,z):=\sqrt{\|x\|^2+\|y\|^2-2\prs{x}{z}},\quad \quad \|z\|\leq \|y\|,
\end{equation}
and $c_k$ is the Macdonald-Mehta constant given by
\begin{equation}\label{Mehta}
c_k:=\int_{\R^d}e^{-\frac{\|x\|^2}{2}}\omega_k(x)dx=2^{\frac{d}{2}+\gamma-1}\Gamma(\frac{d}{2}+\gamma)d_k,
\end{equation}
and  $d_k$ is the constant
\begin{equation}\label{dk}
d_k=\int_{S^{d-1}}\omega_k(\xi)d\sigma(\xi).
\end{equation}

Analogously to the classical case, the $\alpha$-fractional Dunkl heat kernel inherits some properties from the $\Delta_k$-heat kernel.
\begin{Prop} The kernel $p_{k,\alpha}(t,x,y)$ satisfies the following properties:
\begin{description}[labelsep=0.5em,left=0em]
 \item[1.] The kernel $p_{k,\alpha}$ is positive, symmetric in $x$ and $y$. Moreover, we have
$$
\|p_{k,\alpha}(t,x,\cdot)\|_{L^1_k(\R^d)}=1, \quad \quad \forall\ t>0,\  \forall\ x\in\R^d.
$$
\item[2.]For every $t>0$ and every $x\in \R^d$, the Dunkl transform of $p_{k,\alpha}(t,x,\cdot)$ is given by
\begin{equation}\label{Dunkl transf subrdinator}
\mathcal{F}_k\big(p_{k,\alpha}(t,x,\cdot)\big)(\xi)=e^{-t\|\xi\|^{2\alpha}}E_k(-ix,\xi),
\end{equation}
where $E_k$ is the Dunkl kernel (\ref{Dunkl kernel def}). In particular, $p_{k,\alpha}(t,0,\cdot)$ is a radial function.
\item[3.] The function $(t,x,y)\longmapsto p_{k,\alpha}(t,x,y)$ is of class $C^{\infty}$ on $(0,+\infty)\times \R^d\times\R^d$.
\item[4.] \emph{Scaling property:} for all $t>0$ and all $x,y\in\R^d$
\begin{equation}\label{subrdinator scaling}
p_{k,\alpha}(t,x,y)=t^{-\frac{d+2\gamma}{2\alpha}}p_{k,\alpha}(1,t^{-\frac{1}{2\alpha}}x,t^{-\frac{1}{2\alpha}}y).
\end{equation}
\item[5.] \emph{Chapman-Kolmogorov property:} for every $t,s>0$ and every $x,y\in \R^d$
\begin{equation}\label{subordinator kernel semigroup}
\int_{\R^d}p_{k,\alpha}(t,x,z)p_{k,\alpha}(s,z,y)\omega_k(z)dz=p_{k,\alpha}(t+s,x,y).
\end{equation}
\end{description}
\end{Prop}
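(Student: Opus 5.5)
The plan is to transfer each property of the Dunkl heat kernel $p_k(s,x,y)$ (recalled in Appendix A.3) to $p_{k,\alpha}$ through the subordination integral (\ref{subrdinator kernel}). Fubini--Tonelli is legitimate throughout because $\eta_{\alpha,t}\geq 0$ is a probability density and $p_k>0$. The heat kernel facts I will use are: $p_k(s,x,y)>0$; $p_k(s,x,y)=p_k(s,y,x)$; $\int p_k(s,x,y)\omega_k(y)dy=1$; $\mathcal{F}_k(p_k(s,x,\cdot))(\xi)=e^{-s\|\xi\|^2}E_k(-ix,\xi)$; the scaling $p_k(s,x,y)=s^{-d/2-\gamma}p_k(1,s^{-1/2}x,s^{-1/2}y)$; and the Chapman--Kolmogorov identity for $p_k$.

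\textbf{Properties 1 and 2.} Positivity and symmetry of $p_{k,\alpha}$ are immediate from the corresponding properties of $p_k$. Integrating (\ref{subrdinator kernel}) against $\omega_k(y)dy$ and applying Tonelli gives $\int_0^\infty\eta_{\alpha,t}(s)ds=1$, which is the $L^1_k$ norm identity. For the Dunkl transform, $|E_k(-ix,\xi)|\leq 1$ and the $L^1_k$ bound justify Fubini, so
$$\mathcal{F}_k\big(p_{k,\alpha}(t,x,\cdot)\big)(\xi)=E_k(-ix,\xi)\int_0^\infty e^{-s\|\xi\|^2}\eta_{\alpha,t}(s)ds=e^{-t\|\xi\|^{2\alpha}}E_k(-ix,\xi)$$
by (\ref{subordinator laplace trans}) with $\lambda=\|\xi\|^2$. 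Taking $x=0$ gives $E_k(0,\xi)=1$, so the transform is radial. Since $\mathcal{F}_k$ commutes with orthogonal maps when the function is invariant under $O(d)$, injectivity of $\mathcal{F}_k$ on $L^1_k$ then shows that $p_{k,\alpha}(t,0,\cdot)$ is radial. Alternatively, this follows directly from $p_k(s,0,y)\simeq s^{-d/2-\gamma}e^{-\|y\|^2/4s}$.

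\textbf{Properties 4 and 5.} For scaling, I use the scaling of the subordinator, $\eta_{\alpha,t}(s)=t^{-1/\alpha}\eta_{\alpha,1}(t^{-1/\alpha}s)$, which follows from (\ref{subordinator laplace trans}) and uniqueness of the Laplace transform. Substituting $s=t^{1/\alpha}u$ and applying the heat kernel scaling gives the factor $(t^{1/\alpha}u)^{-d/2-\gamma}$ and the arguments $(t^{1/\alpha}u)^{-1/2}x$. Regrouping with $u^{-d/2-\gamma}p_k(1,u^{-1/2}\cdot,u^{-1/2}\cdot)=p_k(u,\cdot,\cdot)$ yields (\ref{subrdinator scaling}). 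For Chapman--Kolmogorov, Tonelli and the heat kernel semigroup identity give
$$\int_0^\infty\!\!\int_0^\infty p_k(r+r',x,y)\eta_{\alpha,t}(r)\eta_{\alpha,s}(r')\,dr\,dr'.$$
This equals $\int p_k(u,x,y)(\eta_{\alpha,t}*\eta_{\alpha,s})(u)du$, and $\eta_{\alpha,t}*\eta_{\alpha,s}=\eta_{\alpha,t+s}$ because the product of the Laplace transforms is $e^{-(t+s)\lambda^\alpha}$.

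\textbf{Property 3.} This is the main obstacle, since smoothness requires differentiating under the integral in $s$ near $s=0$, where $p_k$ is singular only on the diagonal. I would instead use the Fourier side. By Dunkl inversion and property 2,
$$p_{k,\alpha}(t,x,y)=c\int_{\R^d}e^{-t\|\xi\|^{2\alpha}}E_k(ix,\xi)E_k(-iy,\xi)\omega_k(\xi)d\xi.$$
The derivative estimates $|\partial_x^\beta E_k(ix,\xi)|\leq\|\xi\|^{|\beta|}$ hold for real arguments; this follows from the Laplace representation (\ref{Vk measure}), since $E_k(ix,\xi)=V_k(e^{i\langle\cdot,\xi\rangle})(x)$ and $\mathrm{supp}\,\mu_x\subset C(x)$. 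A $t$-derivative introduces the factor $\|\xi\|^{2\alpha m}$. Since $e^{-t\|\xi\|^{2\alpha}}\|\xi\|^N$ is integrable against $\omega_k$, uniformly for $t\geq t_0>0$, dominated convergence yields joint $C^\infty$ regularity on $(0,\infty)\times\R^d\times\R^d$.
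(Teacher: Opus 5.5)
Your proposal is correct. For properties 1--3 it is the paper's argument. You use Tonelli/Fubini through the subordination integral for positivity, symmetry, the $L^1_k$ norm and the Dunkl transform. Then you use the inversion formula (\ref{subrdinator Def2}) together with the bound $|\partial^\beta_x E_k(ix,\xi)|\le\|\xi\|^{|\beta|}$ to differentiate under the integral.

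Where you differ is in properties 4 and 5, which you prove on the subordination side while the paper works on the Fourier side. For scaling, the paper rescales the frequency variable by $t^{-1/(2\alpha)}$ in (\ref{subrdinator Def2}), using only the homogeneity of $\omega_k$ and $E_k(\lambda x,y)=E_k(x,\lambda y)$. For Chapman--Kolmogorov, the paper uses Plancherel and mentions your convolution-semigroup argument $\eta_{\alpha,t}\ast\eta_{\alpha,s}=\eta_{\alpha,t+s}$ only as an alternative. Your treatment of 4 and 5 has the merit of involving only nonnegative integrands, so Tonelli suffices and no $L^2$ theory is needed. However, it relies on the heat-kernel scaling $p_k(s,x,y)=s^{-d/2-\gamma}p_k(1,s^{-1/2}x,s^{-1/2}y)$, which is not among the recalled facts. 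That scaling is itself most quickly proved by the paper's change of variables in (\ref{heat Fourier2}), or from (\ref{heat kernel 1 Intr}) together with the fact that $\mu_{ry}$ is the image of $\mu_y$ under $z\mapsto rz$. The Fourier route, by contrast, yields 3, 4 and 5 from the single formula (\ref{subrdinator Def2}).

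Two small corrections.
\begin{itemize}
\item The Dunkl transform commutes only with the action of $W$, not with all of $O(d)$, so your first radiality argument fails as written. Your alternative via $p_k(s,0,y)=c_k^{-1}(2s)^{-d/2-\gamma}e^{-\|y\|^2/4s}$ is fine. So is inverting and using that $\mathcal{F}_k$ maps radial functions to radial functions.
\item The derivative bound does not follow by differentiating $\int e^{i\langle z,\xi\rangle}d\mu_x(z)$ in $x$, since the measure $\mu_x$ itself depends on $x$. Instead write $E_k(ix,\xi)=E_k(\xi,ix)=\int e^{i\langle z,x\rangle}d\mu_\xi(z)$ with $\mathrm{supp}\,\mu_\xi\subset C(\xi)\subset\{\|z\|\le\|\xi\|\}$, or simply cite (\ref{Dunkl kernel inequality2}).
\end{itemize}
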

\emph{Proof:} The first  follows immediately from the properties of the $\Delta_k$-heat kernel (see Appendix A.3). Using respectively Fubini's theorem,  the equality
 $\mathcal{F}_k\big(p_{k}(t,x,\cdot)\big)(\xi)=e^{-t\|\xi\|^{2}}E_k(-ix,\xi)$ and (\ref{subordinator laplace trans}), we obtain relation (\ref{Dunkl transf subrdinator}).\\
From the inversion formula for the Dunkl transform and  (\ref{Dunkl transf subrdinator}), we can write
\begin{equation}\label{subrdinator Def2}
p_{k,\alpha}(t,x,y)=c_k^{-2}\int_{\R^d}e^{-t\|\xi\|^{2\alpha}}E_k(-ix,\xi)E_k(iy,\xi)\omega_k(\xi)d\xi.
\end{equation}
Due to estimate (\ref{Dunkl kernel inequality2}) for the Dunkl kernel,  differentiation  under the integral sign in (\ref{subrdinator Def2}) is justified, proving  the third assertion. The scaling property follows by using the change of variables $y=t^{\frac{1}{\alpha}}\xi$ in  relation (\ref{subrdinator Def2}). As in the classical case, the  Chapman-Kolmogorov property can be proven by using the properties of the measures $\big\{\eta_{\alpha,t}(s)ds: t\geq0\big\}$. Alternatively, it can be deduced from  Plancherel's  formula for the Dunkl transform and  relations (\ref{Dunkl transf subrdinator}) and (\ref{subrdinator Def2}):
\begin{align*}
\int_{\R^d}p_{k,\alpha}(t,x,z)p_{k,\alpha}(s,z,y)\omega_k(z)dz
&=c_k^{-2}\int_{\R^d}e^{-(t+s)\|\xi\|^{2\alpha}}E_k(-ix,\xi)E_k(iy,\xi)\omega_k(\xi)d\xi\\
&=p_{k,\alpha}(t+s,x,y).
\end{align*}
\fd

\begin{Prop} For all $f\in L^2_k(\R^d)$ and all $t>0$,
\begin{equation}\label{Dunkl Trans FHS}
H_{k,\alpha}(t)f(x)=p_{k,\alpha}(t,0,\cdot)\ast_kf(x)=c_k^{-2}\int_{\R^d}e^{-t\|\xi\|^{2\alpha}}\mathcal{F}_k(f)(\xi)E_k(ix,\xi)\omega_k(\xi)d\xi,
\end{equation}
where $\ast_k$ is the Dunkl convolution product (see Appendix A.2).
\end{Prop}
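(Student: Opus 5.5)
We will prove the second equality first, then identify it with the convolution, working with $f\in L^2_k(\R^d)$ and then removing any auxiliary density assumption.

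\emph{Step 1 (the heat semigroup on the Fourier side).} From the appendix we use the fact that, for $f\in L^2_k(\R^d)$ and $s>0$,
$$
e^{s\Delta_k}f(x)=\int_{\R^d}p_k(s,x,y)f(y)\omega_k(y)dy .
$$
By Plancherel's formula for $\mathcal{F}_k$ together with $\mathcal{F}_k(p_k(s,x,\cdot))(\xi)=e^{-s\|\xi\|^2}E_k(-ix,\xi)$, this can be rewritten as
$$
e^{s\Delta_k}f(x)=c_k^{-2}\int_{\R^d}e^{-s\|\xi\|^2}\mathcal{F}_k(f)(\xi)E_k(ix,\xi)\omega_k(\xi)d\xi .
$$
Equivalently, $\mathcal{F}_k(e^{s\Delta_k}f)=e^{-s\|\cdot\|^2}\mathcal{F}_k f$ in $L^2_k$. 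We will use this multiplier form.

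\emph{Step 2 (subordination, second equality).} Insert the representation of Step 1 into (\ref{frac heat sgp}). Since $|E_k(ix,\xi)|\le 1$, we have $\mathcal{F}_kf\in L^2_k$, and $\int e^{-s\|\xi\|^2}|\mathcal{F}_kf(\xi)|\,\omega_k(\xi)d\xi\lesssim s^{-(d+2\gamma)/4}\|f\|_{L^2_k}$ by Cauchy--Schwarz. This last bound is not integrable against $\eta_{\alpha,t}(s)ds$ near $s=0$ in general, so a direct Fubini argument is delicate; this is the main obstacle. I would avoid it by arguing on the Fourier side in $L^2_k$.

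The Bochner integral (\ref{frac heat sgp}) converges in $L^2_k(\R^d)$, and $\mathcal{F}_k$ is a unitary operator (up to normalization), so it commutes with the Bochner integral. Hence
$$
\mathcal{F}_k(H_{k,\alpha}(t)f)(\xi)=\int_0^\infty e^{-s\|\xi\|^2}\eta_{\alpha,t}(s)ds\,\mathcal{F}_kf(\xi)=e^{-t\|\xi\|^{2\alpha}}\mathcal{F}_kf(\xi),
$$
using (\ref{subordinator laplace trans}) pointwise in $\xi$. The pointwise Fubini step is justified because the integrand is nonnegative and bounded by $1$. Now $e^{-t\|\xi\|^{2\alpha}}\mathcal{F}_kf\in L^1_k\cap L^2_k$, since $e^{-t\|\xi\|^{2\alpha}}\in L^2_k$ and Cauchy--Schwarz applies. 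The inverse Dunkl transform therefore gives the integral formula in (\ref{Dunkl Trans FHS}) for almost every $x$, and in fact for every $x$ once we take the continuous representative.

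\emph{Step 3 (the convolution identity).} Recall from Appendix A.2 that for $g\in L^1_k\cap L^2_k$ (or $g\in L^2_k$ with $\mathcal{F}_kg\in L^1_k$) and $f\in L^2_k$ one has $\mathcal{F}_k(g\ast_kf)=\mathcal{F}_kg\cdot\mathcal{F}_kf$, with
$$
g\ast_kf(x)=c_k^{-2}\int\mathcal{F}_kg\,\mathcal{F}_kf\,E_k(ix,\cdot)\,\omega_k .
$$
Take $g=p_{k,\alpha}(t,0,\cdot)$. This function lies in $L^1_k$ by part 1 of the previous Proposition, it lies in $L^\infty$ by (\ref{subrdinator Def2}), hence in $L^2_k$, and $\mathcal{F}_kg=e^{-t\|\xi\|^{2\alpha}}$ by (\ref{Dunkl transf subrdinator}), since $E_k(0,\xi)=1$. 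Plugging this in yields the first equality.

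Equivalently, one can use the Dunkl translation $\tau_x g(-y)=p_{k,\alpha}(t,x,y)$, which follows by comparing Dunkl transforms through (\ref{subrdinator Def2}). Then $H_{k,\alpha}(t)f(x)=\int p_{k,\alpha}(t,x,y)f(y)\omega_k(y)dy$, and this is the Dunkl convolution by definition.
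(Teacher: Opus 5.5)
Your proof is correct, but it runs in the opposite order from the paper's.

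\textbf{The paper's route.} The paper works on the kernel side. Since $p_{k,\alpha}(t,0,\cdot)$ and its Dunkl transform $e^{-t\|\cdot\|^{2\alpha}}$ both lie in $L^1_k(\R^d)$, formula (\ref{translation Schwartz}) together with (\ref{Dunkl transf subrdinator}) gives $p_{k,\alpha}(t,x,\cdot)=\tau_{-x}p_{k,\alpha}(t,0,\cdot)$. The kernel representation $H_{k,\alpha}(t)f(x)=\int_{\R^d}p_{k,\alpha}(t,x,y)f(y)\omega_k(y)dy$ is then literally the Dunkl convolution, and (\ref{Dunkl trans convolution}) produces the Fourier integral.

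\textbf{Your route.} You compute $\mathcal{F}_k(H_{k,\alpha}(t)f)=e^{-t\|\cdot\|^{2\alpha}}\mathcal{F}_k(f)$ by pushing $\mathcal{F}_k$ through the subordination integral (\ref{frac heat sgp}), and then invert. Only at the end do you recognize the result as a convolution, via (\ref{Dunkl trans convolution}) and $\mathcal{F}_k(p_{k,\alpha}(t,0,\cdot))=e^{-t\|\cdot\|^{2\alpha}}$.

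\textbf{What each buys.} Your route never needs the Dunkl translation of the kernel. It also delivers directly the $L^2_k$-multiplier form of $H_{k,\alpha}(t)$, which is what is used afterwards, e.g.\ for identifying the domain $H^{2\alpha}_k(\R^d)$. The paper's route yields the structural identity $p_{k,\alpha}(t,x,\cdot)=\tau_{-x}p_{k,\alpha}(t,0,\cdot)$ and an identity valid for every $x$. The price is a tacit Fubini step to pass from (\ref{frac heat sgp}) to the kernel representation of $H_{k,\alpha}(t)f$.

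\textbf{A correction.} The ``main obstacle'' you identify in Step 2 does not exist. By Tonelli,
$\int_0^\infty\!\int_{\R^d}e^{-s\|\xi\|^2}|\mathcal{F}_k(f)(\xi)|\omega_k(\xi)d\xi\,\eta_{\alpha,t}(s)ds=\int_{\R^d}e^{-t\|\xi\|^{2\alpha}}|\mathcal{F}_k(f)(\xi)|\omega_k(\xi)d\xi$, and you yourself show the right-hand side is finite.

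Equivalently, the stable subordinator has all negative moments: $\int_0^\infty s^{-p}\eta_{\alpha,t}(s)ds=\frac{\Gamma(p/\alpha)}{\alpha\Gamma(p)}t^{-p/\alpha}<\infty$ for every $p>0$. So your bound $s^{-(d+2\gamma)/4}$ is in fact integrable against $\eta_{\alpha,t}(s)ds$.

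Hence the direct pointwise Fubini argument works for every $x$. The Bochner-integral detour and the passage to a continuous representative are therefore unnecessary, though harmless.

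A minor point: $\mathcal{F}_k(f)\in L^2_k(\R^d)$ follows from Plancherel, not from $|E_k(ix,\xi)|\leq 1$.
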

\emph{Proof:} Since $p_{k,\alpha}(t,0,\cdot)$ and  $\mathcal{F}_k\big(p_{k,\alpha}(t,0,\cdot)\big)$ are both  in $L^1_k(\R^d)$, it follows from relations  (\ref{translation Schwartz}) and  (\ref{Dunkl transf subrdinator}) that $p_{k,\alpha}(t,x,\cdot)=\tau_{-x}p_{k,\alpha}(t,0,\cdot)\in  L^2_k(\R^d)$. Therefore, the first equality in (\ref{Dunkl Trans FHS}) holds. By (\ref{Dunkl trans convolution}), the second equality follows.

\begin{Def} Let $\alpha\in (0,1)$. We define the $\alpha$-power of the  Dunkl  Laplacian, denoted by $(-\Delta_k)^{\alpha}$, as the  operator
 \begin{equation}\label{Def-GI}
(-\Delta_k)^{\alpha}f:=\lim_{t\to0}\frac{1}{t}\big(f-H_{k,\alpha}(t)f\big),\quad f\in X
\end{equation}
whenever the limit exists in the Banach space $X$ (where $X$ is as above). That is, $-(-\Delta_k)^{\alpha}$ is the infinitesimal generator of the semigroup $\big\{H_{k,\alpha}(t):\ t\geq0\big\}$ on $X$.
\end{Def}
We now state some properties of the fractional Dunkl Laplacian acting on the Schwartz space $\mathcal{S}(\R^d)$.
\begin{Prop} \label{frac Lap-trans} For all  $f\in\mathcal{S}(\R^d)$, we have
\begin{equation}\label{frac Lap-Schwartz functions}
(-\Delta_k)^{\alpha}f(x)=c_k^{-2}\int_{\R^d}\|\xi\|^{2\alpha}\mathcal{F}_k(f)(\xi)E_k(ix,\xi)\omega_k(\xi)d\xi,\quad \quad x\in \R^d.
\end{equation}
In particular,
\begin{description}[labelsep=0.5em,left=0em]
\item[i)] $(-\Delta_k)^{\alpha}f$ is a $C^{\infty}$-bounded function on $\R^d$.
\item[ii)] $\lim_{\alpha\to0} (-\Delta_k)^{\alpha}f=f$ and $\lim_{\alpha\to1} (-\Delta_k)^{\alpha}f=-\Delta_kf$.
\item[iii)] The operator $(-\Delta_k)^{\alpha}$ commutes with the $W$-action, i.e.
$$
\forall\ g\in W, \quad (-\Delta_k)^{\alpha}\big(f(g\cdot)\big)=\big((-\Delta_k)^{\alpha}f\big)(g\cdot).
$$
\item[iv)] \emph{Homogeneity property:} for every $\lambda\in\R\setminus\{0\}$,
\begin{equation}\label{frac lap-dilation}
(-\Delta_k)^{\alpha}[f(\lambda\cdot)]=\lambda^{2\alpha}[(-\Delta_k)^{\alpha}f](\lambda\cdot).
\end{equation}
\item[v)] For every $\xi\in\R^d$, we have $(-\Delta_k)^{\alpha}D_{\xi}=D_{\xi}(-\Delta_k)^{\alpha}$ in $\mathcal{S}(\R^d)$.
\item[vi)] If $f\in \mathcal{S}(\R^d)$ is radial, then $(-\Delta_k)^{\alpha}f$ is also radial.
\end{description}

\end{Prop}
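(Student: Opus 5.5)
The plan is to work on the Fourier side, using the representation (\ref{Dunkl Trans FHS}) of $H_{k,\alpha}(t)$. For $f\in\mathcal{S}(\R^d)$ the Dunkl transform $\mathcal{F}_k(f)$ again lies in $\mathcal{S}(\R^d)$, and $|E_k(ix,\xi)|\le 1$. So we may write
$$
\frac{f(x)-H_{k,\alpha}(t)f(x)}{t}=c_k^{-2}\int_{\R^d}\frac{1-e^{-t\|\xi\|^{2\alpha}}}{t}\,\mathcal{F}_k(f)(\xi)E_k(ix,\xi)\omega_k(\xi)d\xi .
$$
Here we use the inversion formula $f(x)=c_k^{-2}\int\mathcal{F}_k(f)E_k(ix,\cdot)\omega_k$. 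The elementary bound $0\le (1-e^{-ts})/t\le s$ shows that the integrand is dominated by $\|\xi\|^{2\alpha}|\mathcal{F}_k(f)(\xi)|$, which is in $L^1_k$. The integrand converges pointwise to $\|\xi\|^{2\alpha}\mathcal{F}_k(f)(\xi)E_k(ix,\xi)$.

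The main point, and the step I expect to be the chief obstacle, is that the limit in Definition (\ref{Def-GI}) is taken in the Banach space $X$, not pointwise. The domination above is uniform in $x$, so dominated convergence in fact gives uniform convergence on $\R^d$. This settles the cases $X=\mathcal{C}_0(\R^d)$ and $X=\mathcal{C}_b(\R^d)$. The limit function is continuous and, being the inverse Dunkl transform of an $L^1_k$ function, tends to $0$ at infinity by the Dunkl Riemann--Lebesgue lemma. For $X=L^p_k$, I would proceed as follows. The semigroup is strongly continuous on each $X$, so its generator limit, when it exists in $X$, coincides a.e. with any pointwise limit. It then suffices to show that $\mathcal{S}(\R^d)$ lies in the domain of the generator. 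This can be done on $L^2_k$ directly by Plancherel, using the domination $(1-e^{-ts})/t\le s$ with $\|\xi\|^{2\alpha}\mathcal{F}_k(f)\in L^2_k$. For general $p$ I would instead use the subordination formula $f-H_{k,\alpha}(t)f=\int_0^\infty (f-e^{s\Delta_k}f)\eta_{\alpha,t}(s)ds$ together with $\|f-e^{s\Delta_k}f\|_X\le \min(2\|f\|_X, s\|\Delta_kf\|_X)$. We have $\Delta_k f\in\mathcal{S}\subset X$, and the Lévy--Khintchine representation (\ref{lambda puissance}) shows that $\frac1t\int (f-e^{s\Delta_k}f)\eta_{\alpha,t}(s)ds \to \frac{1}{|\Gamma(-\alpha)|}\int_0^\infty (f-e^{s\Delta_k}f)s^{-1-\alpha}ds$ in $X$. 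Identifying the two limits pointwise then gives (\ref{frac Lap-Schwartz functions}).

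The listed consequences then follow from (\ref{frac Lap-Schwartz functions}).

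\textbf{i)} Boundedness follows from $|E_k(ix,\xi)|\le1$. Smoothness follows by differentiating under the integral sign, using the estimate (\ref{Dunkl kernel inequality2}) on $\partial_x^\beta E_k(ix,\xi)$, which grows polynomially in $\xi$ against the rapidly decreasing function $\mathcal{F}_k(f)$.

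\textbf{ii)} This is dominated convergence with $\|\xi\|^{2\alpha}\le 1+\|\xi\|^2$. The case $\alpha\to1$ uses $\mathcal{F}_k(-\Delta_kf)=\|\xi\|^2\mathcal{F}_k(f)$.

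\textbf{iii)} and \textbf{iv)} These follow from $E_k(ix,g\xi)=E_k(ig^{-1}x,\xi)$, $E_k(ix,\lambda\xi)=E_k(i\lambda x,\xi)$, the $W$-invariance and $2\gamma$-homogeneity of $\omega_k$, and the corresponding transformation rules for $\mathcal{F}_k$, followed by a change of variables. For $\lambda<0$, write $\lambda=-|\lambda|$ and use $-\mathrm{id}$ together with $\|\,{-}\xi\|=\|\xi\|$.

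\textbf{v)} Use $\mathcal{F}_k(D_\xi f)(\eta)=i\langle \xi,\eta\rangle\mathcal{F}_k(f)(\eta)$ and differentiate under the integral, using $D_\xi^{x}E_k(ix,\eta)=i\langle\xi,\eta\rangle E_k(ix,\eta)$. Both sides then equal the inverse transform of $i\langle\xi,\eta\rangle\|\eta\|^{2\alpha}\mathcal{F}_k(f)$.

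\textbf{vi)} A function is radial if and only if it is invariant under $O(d)$. I would use the fact that $\mathcal{F}_k$ of a radial function is radial and is given by a Hankel transform, and that the inverse transform of a radial function is radial. Multiplication by the radial factor $\|\xi\|^{2\alpha}$ preserves radiality, which gives the claim.
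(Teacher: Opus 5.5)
Your proposal is correct and follows the paper's proof: you write $f-H_{k,\alpha}(t)f$ on the Dunkl transform side and use $|E_k(ix,\xi)|\le 1$ together with $0\le t^{-1}\big(1-e^{-t\|\xi\|^{2\alpha}}\big)\le\|\xi\|^{2\alpha}$ to get uniform convergence, then read off items i)--vi) from the representation formula via standard Dunkl transform properties. The only addition is your treatment of $X=L^p_k(\R^d)$ (Plancherel for $p=2$, Phillips' subordination theorem for general $p$, then identifying the $L^p$ limit with the pointwise one); the paper omits this because it only proves uniform convergence, i.e.\ the $\mathcal{C}_0$/$\mathcal{C}_b$ case, and your supplement is sound.
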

\emph{Proof:}
Using (\ref{Dunkl Trans FHS}) and the inversion formula for the Dunkl transform, we see that for all $t>0$ and all $x\in\R^d$,
$$
f(x)-H_{k,\alpha}(t)f(x)=c_k^{-2}\int_{\R^d}\big(1-e^{-t\|\xi\|^{2\alpha}}\big)\mathcal{F}_k(f)(\xi)E_k(ix,\xi)\omega_k(\xi)d\xi.
$$
Now, using  the inequalities $|E_k(ix,\xi)|\leq1$ and
\begin{equation}\label{Inquality Sec1}
\forall\ t>0,\ \forall\ \xi\in\R^d, \quad 0\leq t^{-1}(1-e^{-t\|\xi\|^{2\alpha}})\leq \|\xi\|^{2\alpha},
\end{equation}
we conclude that the sequence $\big\{t^{-1}(f-H_{k,\alpha}(t)f): t>0\big\}\subset \mathcal{C}_0(\R^d)$ converges uniformly to the right-hand side of (\ref{frac Lap-Schwartz functions}).\\
Statements \textbf{i)}-\textbf{v)} follow from  formula (\ref{frac Lap-Schwartz functions})  and the properties of the Dunkl transform (listed in the Appendix A.1).

\fd
\begin{Remark}\label{Remark-S} For all $\alpha> 0$, we can obviously define the power $(-\Delta_k)^{\alpha}$ on $\mathcal{S}(\R^d)$ by setting
$$
(-\Delta_k)^{\alpha}:=(-\Delta_k)^{\lfloor\alpha\rfloor}\circ (-\Delta_k)^{\alpha-\lfloor\alpha\rfloor}=(-\Delta_k)^{\alpha-\lfloor\alpha\rfloor}\circ
 (-\Delta_k)^{\lfloor\alpha\rfloor}.
$$
In particular, formula (\ref{frac Lap-Schwartz functions}) holds for every $\alpha>0$.
\end{Remark}
The next result gives a Phillips-type semigroup formula \cite{Schilling} for the fractional Dunkl Laplacian.
\begin{Prop} Let $f\in \mathcal{S}(\R^d)$ and $0<\alpha<1$. Then, we have
\begin{equation}\label{Frac Laplace-heat}
(-\Delta_k)^{\alpha}f(x)=\frac{1}{\Gamma(-\alpha)}\int_0^{\infty}\big(e^{t\Delta_k}f(x)-f(x)\big)\frac{dt}{t^{\alpha+1}}, \quad \quad x\in \R^d.
\end{equation}

\end{Prop}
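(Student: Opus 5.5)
The plan is to work entirely on the Dunkl transform side. We start from formula (\ref{frac Lap-Schwartz functions}) of Proposition \ref{frac Lap-trans} and insert the L\'evy--Khintchine representation (\ref{lambda puissance}) with $\lambda=\|\xi\|^2$, so that $\|\xi\|^{2\alpha}$ is replaced by $\frac{1}{\Gamma(-\alpha)}\int_0^\infty(e^{-t\|\xi\|^2}-1)t^{-\alpha-1}dt$. This gives, for $x\in\R^d$,
$$
(-\Delta_k)^{\alpha}f(x)=\frac{c_k^{-2}}{\Gamma(-\alpha)}\int_{\R^d}\int_0^{\infty}\big(e^{-t\|\xi\|^2}-1\big)\frac{dt}{t^{\alpha+1}}\,\mathcal{F}_k(f)(\xi)E_k(ix,\xi)\omega_k(\xi)d\xi .
$$
Next we interchange the two integrals by Fubini's theorem. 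After the interchange, the inner $\xi$-integral is recognised through the inversion formula for the Dunkl transform together with the Dunkl-transform description of the heat semigroup. That description is the case $\alpha=1$ of (\ref{Dunkl Trans FHS}), i.e. $\mathcal{F}_k(p_k(t,x,\cdot))(\xi)=e^{-t\|\xi\|^2}E_k(-ix,\xi)$, as used in the proof of the first proposition. It yields
$$
c_k^{-2}\int_{\R^d}\big(e^{-t\|\xi\|^2}-1\big)\mathcal{F}_k(f)(\xi)E_k(ix,\xi)\omega_k(\xi)d\xi=e^{t\Delta_k}f(x)-f(x),
$$
which is exactly the integrand of (\ref{Frac Laplace-heat}).

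The only real point is justifying Fubini, i.e. showing that
$$
\int_{\R^d}\int_0^\infty \big|e^{-t\|\xi\|^2}-1\big|\,t^{-\alpha-1}\,|\mathcal{F}_k(f)(\xi)|\,\omega_k(\xi)\,dt\,d\xi<\infty .
$$
Here we use $|E_k(ix,\xi)|\le 1$. The $t$-integral is computed exactly: by (\ref{lambda puissance}) it equals $|\Gamma(-\alpha)|\,\|\xi\|^{2\alpha}$, since the integrand has constant sign. Alternatively, one can bound it by splitting at $t=\|\xi\|^{-2}$, using $1-e^{-s}\le \min(s,1)$, which gives integrability near $0$ because $\alpha<1$ and near $\infty$ because $\alpha>0$. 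We then need $\int\|\xi\|^{2\alpha}|\mathcal{F}_k(f)(\xi)|\omega_k(\xi)d\xi<\infty$, which holds because $\mathcal{F}_k$ maps $\mathcal{S}(\R^d)$ onto itself and $\omega_k$ has polynomial growth.

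A cross-check on the direct side is also available: $|e^{t\Delta_k}f(x)-f(x)|\lesssim \min(t,1)$ uniformly in $x$, from $\|\Delta_k f\|_\infty$ and the contractivity of the semigroup. This shows the right-hand side of (\ref{Frac Laplace-heat}) is absolutely convergent. It is not needed once Fubini is justified as above.

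I expect no serious obstacle. The main care point is the sign and constant: $\Gamma(-\alpha)<0$ and $e^{-t\lambda}-1<0$, so their ratio is consistent with (\ref{lambda puissance}). The other care point is confirming the heat-semigroup Dunkl-transform identity for Schwartz $f$, which follows from Proposition (\ref{Dunkl Trans FHS}) with the Gaussian multiplier in place of $e^{-t\|\xi\|^{2\alpha}}$.
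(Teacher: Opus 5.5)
Your proposal is correct and follows essentially the same route as the paper. You write $e^{t\Delta_k}f(x)-f(x)$ through the Dunkl inversion formula with multiplier $e^{-t\|\xi\|^2}-1$, use (\ref{lambda puissance}) together with $|E_k(ix,\xi)|\le 1$ to bound the absolute double integral by $|\Gamma(-\alpha)|\int_{\R^d}\|\xi\|^{2\alpha}|\mathcal{F}_k(f)(\xi)|\omega_k(\xi)d\xi<\infty$, and conclude by Fubini's theorem. The one correction is that the heat-semigroup representation should be cited from (\ref{heat semigroup def}), (\ref{Dunkl trans heat}) and (\ref{Dunkl trans convolution}), not as the ``case $\alpha=1$'' of (\ref{Dunkl Trans FHS}), which is only stated for $0<\alpha<1$.
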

\emph{Proof:} Since $e^{t\Delta_k}f\in \mathcal{S}(\R^d)$,  from  the inversion formula for the Dunkl transform we obtain
\begin{align*}
e^{t\Delta_k}f(x)-f(x)&=c_k^{-2}\int_{\R^d}\big(e^{-t\|\xi\|^2}-1\big)\mathcal{F}_k(f)(\xi)E_k(ix,\xi)\omega_k(\xi)d\xi.
\end{align*}
Combining this with the L\'{e}vy-Kintchine formula (\ref{lambda puissance}), we get
\begin{align*}
\int_0^{\infty}\big|f(x)-e^{t\Delta_k}f(x)\big|\frac{dt}{t^{\alpha+1}}
&\lesssim\int_{\R^d}\|\xi\|^{2\alpha}|\mathcal{F}_k(f)(\xi)|\omega_k(\xi)d\xi<+\infty.
\end{align*}
Thus, Fubini's theorem implies
\begin{align*}
\frac{1}{\Gamma(-\alpha)}\int_0^{\infty}\big(e^{t\Delta_k}f(x)-f(x)\big)\frac{dt}{t^{\alpha+1}}
=c_k^{-2}\int_{\R^d}\|\xi\|^{2\alpha}\mathcal{F}_k(f)(\xi)E_k(ix,\xi)\omega_k(\xi)d\xi=(-\Delta_k)^{\alpha}f(x).
\end{align*}
\fd

As an  application, we  prove the following intertwining relation.
\begin{Prop}  The following intertwining relation holds in $\mathcal{S}(\R^d)$
$$
(-\Delta_k)^{\alpha}V_k=V_k(-\Delta)^{\alpha},\quad \forall\ \alpha\geq0.
$$
\end{Prop}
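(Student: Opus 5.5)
The plan is to derive the relation from the Phillips-type formula (\ref{Frac Laplace-heat}), after first establishing the intertwining relation at the level of heat semigroups,
$$
e^{t\Delta_k}V_kf=V_ke^{t\Delta}f,\qquad f\in\mathcal{S}(\R^d),\ t>0 .
$$
A preliminary caveat is that $V_kf$ need not be a Schwartz function. The $\mu_x$ are probability measures supported in the ball of radius $\|x\|$, so $V_kf$ is a bounded $C^\infty$ function, but it need not decay. I would therefore read $(-\Delta_k)^{\alpha}V_kf$ through the semigroup on $\mathcal{C}_b(\R^d)$, equivalently through the right-hand side of (\ref{Frac Laplace-heat}), and check that this is consistent with the Schwartz-space definition whenever both make sense.

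\textbf{Step 1 (heat intertwining).} Write $f(y)=(2\pi)^{-d}\int_{\R^d}\widehat f(\xi)e^{i\prs{y}{\xi}}d\xi$. Since each $\mu_x$ is a probability measure and $\widehat f\in L^1$, Fubini together with (\ref{Vk measure}) and the identity $V_k(e^{i\prs{\cdot}{\xi}})=E_k(i\cdot,\xi)$ gives
$$
V_kf(x)=(2\pi)^{-d}\int_{\R^d}\widehat f(\xi)E_k(ix,\xi)d\xi .
$$
Next I use $\int p_k(t,x,y)E_k(iy,\xi)\omega_k(y)dy=e^{-t\|\xi\|^2}E_k(ix,\xi)$. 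This follows from the Dunkl transform of the heat kernel quoted in the proof of the first proposition and the symmetry of $p_k$. Since $|E_k|\le1$, Fubini applies again and yields
$$
e^{t\Delta_k}V_kf(x)=(2\pi)^{-d}\int_{\R^d}\widehat f(\xi)e^{-t\|\xi\|^2}E_k(ix,\xi)d\xi=V_k\big(e^{t\Delta}f\big)(x),
$$
because $e^{t\Delta}f$ has Fourier transform $e^{-t\|\xi\|^2}\widehat f$.

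\textbf{Step 2 ($0<\alpha<1$).} Insert Step 1 into (\ref{Frac Laplace-heat}):
$$
\frac{1}{\Gamma(-\alpha)}\int_0^\infty\big(e^{t\Delta_k}V_kf-V_kf\big)(x)\frac{dt}{t^{\alpha+1}}=\frac{1}{\Gamma(-\alpha)}\int_0^\infty\int_{\R^d}\big(e^{t\Delta}f-f\big)(z)\,d\mu_x(z)\frac{dt}{t^{\alpha+1}} .
$$
To exchange the integrals I need a bound uniform in $z$. The estimate in the proof of (\ref{Frac Laplace-heat}), in the case $k=0$, gives $\sup_z\int_0^\infty|e^{t\Delta}f(z)-f(z)|t^{-\alpha-1}dt\lesssim\int\|\xi\|^{2\alpha}|\widehat f(\xi)|d\xi<\infty$. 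Combined with $\mu_x(\R^d)=1$, this justifies Fubini. The right-hand side then equals $V_k\big((-\Delta)^\alpha f\big)(x)$ by (\ref{Frac Laplace-heat}) with $k=0$. The same computation shows that the Phillips integral for $V_kf$ converges absolutely, so $(-\Delta_k)^\alpha V_kf$ is well defined in this sense.

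\textbf{Step 3 (general $\alpha\ge0$).} The case $\alpha=0$ is trivial. For integer $\alpha$, iterate (\ref{Intertwining relation}), using that $V_k$ is an isomorphism of $\mathcal{C}^\infty(\R^d)$. For general $\alpha$, use the splitting in Remark \ref{Remark-S}: apply the integer case and then Step 2 to $(-\Delta)^{\lfloor\alpha\rfloor}f\in\mathcal{S}(\R^d)$.

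The main obstacle is that $V_kf\notin\mathcal{S}(\R^d)$ in general. I must make sure that $(-\Delta_k)^\alpha$ applied to $V_kf$ has a meaning compatible with the earlier definitions. I would handle this as indicated: use the generator of $H_{k,\alpha}$ on $\mathcal{C}_b(\R^d)$, whose action on such functions is given by the Phillips formula, with the integrability supplied by the uniform bound in Step 2. The Fubini justifications themselves are routine.
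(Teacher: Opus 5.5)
Your proof is correct. Its skeleton matches the paper's: first the heat-semigroup intertwining, then the Phillips formula (\ref{Frac Laplace-heat}) with Fubini, then reduction of general $\alpha$ to $(0,1)$ via Remark \ref{Remark-S}.

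The difference lies in how you get $e^{t\Delta_k}V_kf=V_k(e^{t\Delta}f)$. The paper writes $V_k(e^{t\Delta}f)$ as a Fourier integral against $E_k(ix,\xi)$, just as you do. It then uses this only to check that $V_k(e^{t\Delta}f)$ solves the $\Delta_k$-heat equation with bounded initial datum $V_kf$, and concludes by R\"osler's uniqueness theorem for bounded solutions of the Cauchy problem. You instead let the heat kernel act directly on the Dunkl kernel. Since $E_k(i\cdot,\xi)$ is an eigenfunction of $e^{t\Delta_k}$ with eigenvalue $e^{-t\|\xi\|^2}$ (a rewriting of (\ref{Dunkl trans heat})), a single Fubini argument gives the identity. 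That step is justified by $|E_k|\le 1$ and $\widehat f\in L^1$. Your route needs no PDE uniqueness result, no differentiation under the integral sign, and no separate check of the initial condition. The paper's route gains nothing here that yours lacks.

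You also flag a point the paper passes over. $V_kf$ is bounded and smooth but generally not in $\mathcal{S}(\R^d)$, so (\ref{Frac Laplace-heat}), proved only for Schwartz functions, cannot simply be quoted for it. Your claim that the generator of $H_{k,\alpha}$ acts on $V_kf$ through the Phillips integral is stated rather than proved. Your own Step 1 settles it directly. By (\ref{subordinator laplace trans}) and Fubini,
\[
H_{k,\alpha}(t)V_kf(x)=(2\pi)^{-d}\int_{\R^d}\widehat f(\xi)e^{-t\|\xi\|^{2\alpha}}E_k(ix,\xi)\,d\xi .
\]
Then (\ref{Inquality Sec1}) and dominated convergence give $t^{-1}\big(V_kf-H_{k,\alpha}(t)V_kf\big)\to V_k\big((-\Delta)^{\alpha}f\big)$ uniformly on $\R^d$. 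This bypasses the Phillips formula altogether. It also avoids relying on strong continuity of the semigroup on all of $\mathcal{C}_b(\R^d)$, which is not automatic.
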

\emph{Proof:} When $\alpha\in \N$, the result follows from (\ref{Intertwining relation}). For non-integer $\alpha$, it is enough to consider the case where $\alpha\in (0,1)$.\\ By the inversion formula for the usual Fourier transform $\mathcal{F}$ and Fubini's theorem, we can write
\begin{align*}
V_k(e^{t\Delta}f)(x)&=(2\pi)^{-d/2}\int_{\R^d}e^{-t\|\xi\|^2}\mathcal{F}(f)(\xi)V_k\big(e^{i\prs{\cdot}{\xi}}\big)(x)d\xi
=(2\pi)^{-d/2}\int_{\R^d}e^{-t\|\xi\|^2}\mathcal{F}(f)(\xi)E_k(ix,\xi)d\xi.
\end{align*}
Hence if we make use of this equality, the intertwining relation $\Delta_kV_k=V_k\Delta$ and the differentiation theorem under
the integral sign, then  we arrive at
$$
\Delta_kV_k(e^{t\Delta}f)=V_k\Delta(e^{t\Delta}f)=V_k(\partial_t e^{t\Delta}f)=\partial_tV_k(e^{t\Delta}f).
$$
Moreover, the dominated convergence theorem implies that  $\lim_{t\to0}V_k(e^{t\Delta}f)=V_k(f)$.
Accordingly, the function $V_k(e^{t\Delta}f)$ satisfies  the $\Delta_k$-Cauchy problem
$$
\begin{cases}
\Delta_ku(t,x)-\partial_tu(t,x)=0,\\
u(0,x)=V_k(f)\in \mathcal{C}_b(\R^d).
\end{cases}
$$
Since $V_k(f)$ is bounded, the solution of this problem is unique (see Corollary 4.4 in \cite{Rosler3}). Thus, we obtain
$$
e^{t\Delta_k}V_k(f)=V_k(e^{t\Delta}f).
$$
Combining this with (\ref{Frac Laplace-heat}) and applying  Fubini's theorem, we arrive at  the desired  intertwining relation.

\fd
\begin{Remark} By means of the Fourier and  Dunkl transforms,  we also obtain another intertwining relation:
$$
(\mathcal{F}^{-1}\circ \mathcal{F}_k)(-\Delta_k)^{\alpha}=(-\Delta)^{\alpha}(\mathcal{F}^{-1}\circ \mathcal{F}_k)\quad \text{in}\quad \mathcal{S}(\R^d).
$$
\end{Remark}

\begin{Prop} The domain of  $(-\Delta_k)^{\alpha}$ in $L^2_k(\R^d)$ is the generalized Sobolev space
$$
H_k^{2\alpha}(\R^d):=\Big\{f\in L^2_k(\R^d): \quad \|\cdot\|^{2\alpha}\mathcal{F}_k(f)\in L^2_k(\R^d)\Big\}.
$$
In addition, for every $f\in H_k^{2\alpha}(\R^d)$ we have
\begin{equation}\label{generateur infinitesimal L2}
\mathcal{F}_k\big((-\Delta_k)^{\alpha}f\big)=\|\cdot\|^{2\alpha}\mathcal{F}_k(f)\quad \quad  \text{in}\quad L^2_k(\R^d)
\end{equation}
and
\begin{equation}\label{norm Frac Lap}
\big\|(-\Delta_k)^{\alpha}f\big\|_{L^2_k(\R^d)}=c_k^{-1}\left\|\|\cdot\|^{2\alpha}\mathcal{F}_k(f)\right\|_{L^2_k(\R^d)}.
\end{equation}
\end{Prop}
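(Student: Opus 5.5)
The plan is to transfer the whole problem to the Fourier side through the Dunkl transform. By Plancherel's theorem, $\mathcal{F}_k$ (normalized as in the Appendix) is, up to the factor $c_k^{-1}$, an isometric isomorphism of $L^2_k(\R^d)$ onto itself. By Proposition (\ref{Dunkl Trans FHS}), extended from the integral formula to all of $L^2_k$ by density, the semigroup $H_{k,\alpha}(t)$ becomes a multiplication operator:
$$\mathcal{F}_k\big(H_{k,\alpha}(t)f\big)=e^{-t\|\xi\|^{2\alpha}}\mathcal{F}_k(f).$$
So $H_{k,\alpha}$ is unitarily equivalent, up to a constant, to the multiplication semigroup $M_t\varphi=e^{-t\|\cdot\|^{2\alpha}}\varphi$ on $L^2_k(\R^d)$. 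Its generator is multiplication by $-\|\xi\|^{2\alpha}$, with maximal domain $\{\varphi:\|\cdot\|^{2\alpha}\varphi\in L^2_k\}$. I will prove this directly rather than quote it, in two inclusions.

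\textbf{First inclusion.} Let $f\in H_k^{2\alpha}(\R^d)$ and set $g=\mathcal{F}_k^{-1}\big(\|\cdot\|^{2\alpha}\mathcal{F}_k(f)\big)\in L^2_k$. By Plancherel,
$$\Big\|\tfrac1t(f-H_{k,\alpha}(t)f)-g\Big\|_{L^2_k}=c_k^{-1}\Big\|\Big(\tfrac{1-e^{-t\|\xi\|^{2\alpha}}}{t}-\|\xi\|^{2\alpha}\Big)\mathcal{F}_k(f)\Big\|_{L^2_k}.$$
The integrand tends to $0$ pointwise as $t\to0$. By (\ref{Inquality Sec1}) it is dominated by $4\|\xi\|^{4\alpha}|\mathcal{F}_k(f)|^2\in L^1_k$. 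Dominated convergence then shows that $f$ is in the domain, and also gives (\ref{generateur infinitesimal L2}).

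\textbf{Second inclusion.} Let $f$ be in the domain, with $h=(-\Delta_k)^{\alpha}f=\lim_{t\to 0} t^{-1}(f-H_{k,\alpha}(t)f)$ in $L^2_k$. Applying $\mathcal{F}_k$, which is continuous, gives
$$t^{-1}(1-e^{-t\|\xi\|^{2\alpha}})\mathcal{F}_k(f)\to\mathcal{F}_k(h)\quad\text{in } L^2_k.$$
Along a sequence $t_n\to0$ this convergence also holds almost everywhere. The left side converges pointwise to $\|\xi\|^{2\alpha}\mathcal{F}_k(f)(\xi)$, so $\mathcal{F}_k(h)=\|\cdot\|^{2\alpha}\mathcal{F}_k(f)$ a.e. Hence $\|\cdot\|^{2\alpha}\mathcal{F}_k(f)\in L^2_k$, that is $f\in H_k^{2\alpha}$, and (\ref{generateur infinitesimal L2}) holds again. (Fatou's lemma would give membership just as well.)

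\textbf{Norm identity and the main obstacle.} (\ref{norm Frac Lap}) follows from (\ref{generateur infinitesimal L2}) by Plancherel's identity $\|u\|_{L^2_k}=c_k^{-1}\|\mathcal{F}_k u\|_{L^2_k}$, matching the normalization implicit in (\ref{Dunkl Trans FHS}). The only delicate point is justifying the multiplier identity for $H_{k,\alpha}(t)$ on all of $L^2_k$. Formula (\ref{Dunkl Trans FHS}) is stated as a pointwise integral, which makes sense literally only when $\mathcal{F}_k(f)$ is integrable. I would handle this with the convolution theorem (\ref{Dunkl trans convolution}): it gives $\mathcal{F}_k(p_{k,\alpha}(t,0,\cdot)\ast_k f)=e^{-t\|\cdot\|^{2\alpha}}\mathcal{F}_k(f)$ for $f\in L^2_k$. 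Alternatively, I would prove the identity on $\mathcal{S}(\R^d)$ and extend it, using that both sides are $L^2_k$-contractions.
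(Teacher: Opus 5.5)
Your proof is correct and follows essentially the same route as the paper. Both arguments pass to the Dunkl transform side and prove $H_k^{2\alpha}(\R^d)\subset$ domain via the bound (\ref{Inquality Sec1}) and dominated convergence; for the reverse inclusion they identify the a.e.\ limit of $t^{-1}(1-e^{-t\|\xi\|^{2\alpha}})\mathcal{F}_k(f)$, and then obtain (\ref{norm Frac Lap}) from Plancherel, with your a.e.-convergent subsequence $t_n\to0$ making explicit a step the paper states only informally.
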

\emph{Proof:} Let $\mathfrak{D}_{\alpha,2}$ be the domain of $(-\Delta_k)^{\alpha}$ in $L^2_k(\R^d)$ and $f\in\mathfrak{D}_{\alpha,2}$.  Using Plancherel's theorem and (\ref{Dunkl Trans FHS}), we obtain  for a.e. $\xi\in \R^d$
\begin{align*}
\mathcal{F}_k\big((-\Delta_k)^{\alpha}f\big)(\xi)&=\lim_{t\to0}t^{-1}\Big(\mathcal{F}_k(f)(\xi)-\mathcal{F}_k\big(H_{k,\alpha}(t)f\big)(\xi)\Big)\\
&=\lim_{t\to0}t^{-1}\big(1-e^{-t\|\xi\|^{2\alpha}}\big)\mathcal{F}_k(f)(\xi)\\
&=\|\xi\|^{2\alpha}\mathcal{F}_k(f)(\xi).
\end{align*}
Thus, $\|\cdot\|^{2\alpha}\mathcal{F}_k(f)\in L^2_k(\R^d)$ and $\mathfrak{D}_{\alpha,2}\subset H^{2\alpha}_k(\R^d)$.
Conversely, if $f\in H_k^{2\alpha}(\R^d)$, then inequality (\ref{Inquality Sec1}) implies that the dominated convergence theorem applies, yielding
$$
\lim_{t\rightarrow0}t^{-1}\Big(\mathcal{F}_k(f)-\mathcal{F}_k\big(H_{k,\alpha}(t)f\big)\Big)=\|\cdot\|^{2\alpha}\mathcal{F}_k(f)\quad \text{in}\quad L^2_k(\R^d).
$$
Hence, by Plancherel's theorem, we deduce that $H_k^{2\alpha}(\R^d)\subset \mathfrak{D}_{\alpha,2}$. On the other hand, using (\ref{generateur infinitesimal L2}) and applying Plancherel's formula for the Dunkl transform $\mathcal{F}_k$, we obtain  equality (\ref{norm Frac Lap}).

\fd

\medskip

By virtue of (\ref{Dunkl transf subrdinator}), we see that for all $t>0$ and all $x\in\R^d$, the function $p_{k,\alpha}(t,x,\cdot)$ belongs to the generalized Sobolev space
$H_{k}^{2\alpha}(\R^d)$.  The next theorem shows that the kernel $p_{k,\alpha}(t,\cdot,\cdot)$ is indeed the fundamental solution of the fractional Dunkl heat operator $\partial_t+(-\Delta_k)^{\alpha}$.
\begin{Thm} The kernel  $p_{k,\alpha}(t,\cdot,\cdot)$ is the fractional heat kernel associated with $(-\Delta_k)^{\alpha}$. That is,  for every $x\in\R^d$
\begin{equation*}\label{Frac heat kernel}
\begin{cases}
\left(\partial_t+(-\Delta_k)^{\alpha}\right)p_{k,\alpha}(t,x,\cdot)=0, & \hbox{in}\quad  \R^d \\
\lim_{t\to0}p_{k,\alpha}(t,x,\cdot)\omega_k=\delta_x, & \hbox{in}\quad  \mathcal{S}'(\R^d).
\end{cases}
\end{equation*}
Moreover,  if $f\in H^{2\alpha}_k(\R^d)$, then the function $e^{-t(-\Delta_k)^{\alpha}}f:=H_{k,\alpha}(t)f$ solves the  fractional heat equation
\begin{equation*}\label{Frac heat kernel}
\big(\partial_t+(-\Delta_k)^{\alpha}\big)u(t,\cdot)=0\quad \text{and} \quad \lim_{t\to0}u(t,\cdot)=f.
\end{equation*}
\end{Thm}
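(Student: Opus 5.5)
The plan is to work entirely on the Dunkl transform side, using (\ref{Dunkl transf subrdinator}), (\ref{subrdinator Def2}) and the characterization of the $L^2_k$-domain given in the preceding proposition.

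Fix $x\in\R^d$. By (\ref{Dunkl transf subrdinator}), $\mathcal{F}_k\big(p_{k,\alpha}(t,x,\cdot)\big)(\xi)=e^{-t\|\xi\|^{2\alpha}}E_k(-ix,\xi)$. Since $|E_k(-ix,\xi)|\leq 1$, the function $\|\xi\|^{2\alpha}e^{-t\|\xi\|^{2\alpha}}E_k(-ix,\xi)$ lies in $L^2_k(\R^d)$, so $p_{k,\alpha}(t,x,\cdot)\in H^{2\alpha}_k(\R^d)$. Relation (\ref{generateur infinitesimal L2}) then gives
$$
\mathcal{F}_k\big((-\Delta_k)^{\alpha}p_{k,\alpha}(t,x,\cdot)\big)(\xi)=\|\xi\|^{2\alpha}e^{-t\|\xi\|^{2\alpha}}E_k(-ix,\xi).
$$

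On the other hand, we differentiate (\ref{subrdinator Def2}) in $t$ under the integral sign. For $t\geq t_0>0$ the integrand is dominated by $\|\xi\|^{2\alpha}e^{-t_0\|\xi\|^{2\alpha}}$, which is integrable with respect to $\omega_k(\xi)d\xi$. This gives
$$
\partial_tp_{k,\alpha}(t,x,y)=-c_k^{-2}\int_{\R^d}\|\xi\|^{2\alpha}e^{-t\|\xi\|^{2\alpha}}E_k(-ix,\xi)E_k(iy,\xi)\omega_k(\xi)d\xi.
$$
By Dunkl inversion this is exactly $-(-\Delta_k)^{\alpha}p_{k,\alpha}(t,x,\cdot)(y)$, pointwise and in $L^2_k$. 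Hence $\big(\partial_t+(-\Delta_k)^{\alpha}\big)p_{k,\alpha}(t,x,\cdot)=0$.

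For the initial condition, take $\varphi\in\mathcal{S}(\R^d)$. Using Plancherel/Parseval for $\mathcal{F}_k$, or equivalently Fubini in (\ref{subrdinator Def2}), we get
$$
\int_{\R^d}p_{k,\alpha}(t,x,y)\varphi(y)\omega_k(y)dy=c_k^{-2}\int_{\R^d}e^{-t\|\xi\|^{2\alpha}}\mathcal{F}_k(\varphi)(\xi)E_k(ix,\xi)\omega_k(\xi)d\xi,
$$
up to the conjugation conventions of the Appendix, which I would check against (\ref{Dunkl Trans FHS}). Since $\mathcal{F}_k(\varphi)\in L^1_k$ and $|E_k|\leq1$, dominated convergence as $t\to0$ yields $c_k^{-2}\int\mathcal{F}_k(\varphi)(\xi)E_k(ix,\xi)\omega_k(\xi)d\xi=\varphi(x)$ by the inversion formula. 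So $p_{k,\alpha}(t,x,\cdot)\omega_k\to\delta_x$ in $\mathcal{S}'(\R^d)$.

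For the second part, take $f\in H^{2\alpha}_k$ and $u(t)=H_{k,\alpha}(t)f$. By (\ref{Dunkl Trans FHS}), $\mathcal{F}_k(u(t))=e^{-t\|\cdot\|^{2\alpha}}\mathcal{F}_k(f)$. Then $u(t)\in H^{2\alpha}_k$ and $\mathcal{F}_k\big((-\Delta_k)^{\alpha}u(t)\big)=\|\cdot\|^{2\alpha}e^{-t\|\cdot\|^{2\alpha}}\mathcal{F}_k(f)$. The difference quotient $h^{-1}\big(e^{-(t+h)\|\xi\|^{2\alpha}}-e^{-t\|\xi\|^{2\alpha}}\big)\mathcal{F}_k(f)$ is dominated by $\|\xi\|^{2\alpha}|\mathcal{F}_k(f)|\in L^2_k$, using the mean value theorem and (\ref{Inquality Sec1}). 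So it converges in $L^2_k$ to $-\|\xi\|^{2\alpha}e^{-t\|\xi\|^{2\alpha}}\mathcal{F}_k(f)$, and Plancherel gives $\partial_tu=-(-\Delta_k)^{\alpha}u$ in $L^2_k$. Alternatively this is just the standard fact that a $C_0$-semigroup solves $u'=Au$ on the domain of its generator. The limit $u(t)\to f$ in $L^2_k$ follows from $|1-e^{-t\|\xi\|^{2\alpha}}|\,|\mathcal{F}_k f|\to0$ dominated by $2|\mathcal{F}_k f|$.

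The only delicate point is justifying the interchange of derivatives and integrals. This is handled by the uniform domination for $t\geq t_0$ together with $|E_k|\leq1$. I also need to keep the conjugation conventions of $E_k$ consistent when identifying $(-\Delta_k)^{\alpha}$ applied to $p_{k,\alpha}(t,x,\cdot)$.
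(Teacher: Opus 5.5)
Your proposal is correct and takes essentially the same approach as the paper. Like the paper, you work on the Dunkl-transform side via (\ref{Dunkl transf subrdinator}), (\ref{subrdinator Def2}), (\ref{generateur infinitesimal L2}) and (\ref{Dunkl Trans FHS}). The differences are minor. You identify $\partial_t p_{k,\alpha}$ by differentiating under the integral with an explicit domination, where the paper invokes injectivity of $\mathcal{F}_k$. You test $p_{k,\alpha}(t,x,\cdot)\omega_k$ directly against $\varphi\in\mathcal{S}(\R^d)$, where the paper conjugates by the isomorphism $\mathcal{F}_k$ on $\mathcal{S}'(\R^d)$. Both changes fill in justifications the paper leaves implicit.
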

\emph{Proof:} Using respectively the relations (\ref{generateur infinitesimal L2}), (\ref{Dunkl transf subrdinator})
and (\ref{subrdinator Def2}), we deduce that
\begin{align*}
\mathcal{F}_k\big((-\Delta_k)^{\alpha}p_{k,\alpha}(t,x,\cdot)\big)&=\|\cdot\|^{\alpha}E_k(ix,\cdot)e^{-t\|\cdot\|^{2\alpha}}
=-\partial_t\mathcal{F}_k\big(p_{k,\alpha}(t,x,\cdot)\big)=-\mathcal{F}_k\big(\partial_tp_{k,\alpha}(t,x,\cdot)\big).
\end{align*}
Thus, by the injectivity of the Dunkl transform implies that
$$
(-\Delta_k)^{\alpha}p_{k,\alpha}(t,x,\cdot)+\partial_tp_{k,\alpha}(t,x,\cdot)=0.
$$
On the other hand, from (\ref{Dunkl Trans FHS}) we get
\begin{align*}
 \lim_{t\to0}\prs{\mathcal{F}_k\big(p_{k,\alpha}(t,x,\cdot)\omega_k\big)}{\varphi}
=\lim_{t\to0}H_{k,\alpha}(t)\mathcal{F}_k(\phi)(x)=\mathcal{F}_k(\varphi)(x)=\prs{\mathcal{F}_k(\delta_x)}{\varphi}, \quad \forall\ \varphi\in\mathcal{S}(\R^d).
\end{align*}
Since the Dunkl transform $\mathcal{F}_k:\mathcal{S}'(\R^d)\longrightarrow\mathcal{S}'(\R^d)$ is an isomorphism, the first claim follows.\\
The second part is a direct consequence of (\ref{Dunkl Trans FHS}) and (\ref{generateur infinitesimal L2}).

\fd

\medskip

We collect additional properties of the fractional Dunkl Laplacian. They can be obtained directly from (\ref{generateur infinitesimal L2}) and the properties of the Dunkl transform, so we omit the details.
\begin{Prop}Let $f,g\in H_k^{2\alpha}(\R^d)$.
\begin{description}
  \item[i)] Translation invariance: for all $x\in \R^d$,
\begin{equation}\label{frac lap-translation}
\tau_x(-\Delta_k)^{\alpha}f=(-\Delta_k)^{\alpha}\tau_xf, \quad \text{in}\quad  L^2_k(\R^d).
\end{equation}
 \item[ii)] Convolution invariance:
\begin{equation}\label{frac lap-convolution}
(-\Delta_k)^{\alpha}(f\ast_kg)=\big((-\Delta_k)^{\alpha}f\big)\ast_kg=f\ast_k\big((-\Delta_k)^{\alpha}g\big),\quad \text{in}\quad  L^2_k(\R^d).
\end{equation}
\item[iii)] Symmetry:
\begin{equation}\label{symmetry}
{\prs{(-\Delta_k)^{\alpha}f}{g}}_{L^2_k(\R^d)}={\prs{(-\Delta_k)^{\alpha/2}f}{(-\Delta_k)^{\alpha/2}g}}_{L^2_k(\R^d)}
={\prs{f}{(-\Delta_k)^{\alpha}g}}_{L^2_k(\R^d)}.
\end{equation}
\end{description}

\end{Prop}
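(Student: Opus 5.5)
All three identities will be obtained on the Dunkl transform side, combining (\ref{generateur infinitesimal L2}) with Plancherel's theorem and the injectivity of $\mathcal{F}_k$ on $L^2_k(\R^d)$. Since $f,g\in H^{2\alpha}_k(\R^d)$, the function $(-\Delta_k)^{\alpha}f$ lies in $L^2_k(\R^d)$ and has transform $\|\cdot\|^{2\alpha}\mathcal{F}_k(f)$. The ingredients I will take from Appendix A are the following:
\begin{itemize}
\item the translation rule $\mathcal{F}_k(\tau_x h)(\xi)=E_k(ix,\xi)\mathcal{F}_k(h)(\xi)$ for $h\in L^2_k(\R^d)$;
\item the bound $|E_k(ix,\xi)|\leq 1$;
\item the convolution rule $\mathcal{F}_k(h\ast_k g)=\mathcal{F}_k(h)\mathcal{F}_k(g)$, as in (\ref{Dunkl trans convolution}).
\end{itemize}

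For \textbf{i)}, I first check that $\tau_xf\in H^{2\alpha}_k(\R^d)$. This holds because $\|\xi\|^{2\alpha}|\mathcal{F}_k(\tau_xf)(\xi)|\leq \|\xi\|^{2\alpha}|\mathcal{F}_k(f)(\xi)|$. Then both sides of (\ref{frac lap-translation}) have the same transform $E_k(ix,\cdot)\|\cdot\|^{2\alpha}\mathcal{F}_k(f)$, and the identity follows by injectivity.

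For \textbf{ii)}, I need $f\ast_kg$ to make sense in $L^2_k(\R^d)$ with transform $\mathcal{F}_k(f)\mathcal{F}_k(g)$. For $f,g\in L^2_k(\R^d)$ I would define it by
$$
f\ast_k g(x)=c_k^{-2}\int_{\R^d}\mathcal{F}_k(f)(\xi)\mathcal{F}_k(g)(\xi)E_k(ix,\xi)\,\omega_k(\xi)d\xi .
$$
This is possible only when the product $\mathcal{F}_k(f)\mathcal{F}_k(g)$ is in $L^2_k(\R^d)$. In the situation at hand this holds: $\mathcal{F}_k(f)\in L^2_k(\R^d)$, and $\mathcal{F}_k(g)\in L^\infty$ whenever $g\in L^1_k(\R^d)$, which is the setting of (\ref{Dunkl trans convolution}). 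I will take the convolution in exactly the sense of Appendix A.2, so that its transform is the product. All three expressions in (\ref{frac lap-convolution}) then have transform $\|\cdot\|^{2\alpha}\mathcal{F}_k(f)\mathcal{F}_k(g)$, so they coincide by injectivity. The main obstacle is this functional-analytic point: making sure each convolution lies in $L^2_k(\R^d)$ and in the domain $H^{2\alpha}_k(\R^d)$. Once the convolution is interpreted through the appendix so that its transform is the product, the rest is algebra.

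For \textbf{iii)}, the first step is that $H^{2\alpha}_k(\R^d)\subset H^{\alpha}_k(\R^d)$. This follows from $\|\xi\|^{\alpha}\leq 1+\|\xi\|^{2\alpha}$, so $(-\Delta_k)^{\alpha/2}f$ and $(-\Delta_k)^{\alpha/2}g$ are defined, with transforms $\|\cdot\|^{\alpha}\mathcal{F}_k(f)$ and $\|\cdot\|^{\alpha}\mathcal{F}_k(g)$. Parseval's formula for $\mathcal{F}_k$ gives
$$
{\prs{(-\Delta_k)^{\alpha}f}{g}}_{L^2_k(\R^d)}=c_k^{-2}\int_{\R^d}\|\xi\|^{2\alpha}\mathcal{F}_k(f)(\xi)\overline{\mathcal{F}_k(g)(\xi)}\,\omega_k(\xi)d\xi .
$$
The factor $\|\xi\|^{2\alpha}$ is real and nonnegative, so I split it as $\|\xi\|^{\alpha}\cdot\|\xi\|^{\alpha}$ to obtain the middle expression of (\ref{symmetry}). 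Moving the whole factor onto $\overline{\mathcal{F}_k(g)}$ instead gives the last expression. The normalization constant $c_k^{-2}$ is the same as in (\ref{norm Frac Lap}).
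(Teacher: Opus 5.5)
Your route is the one the paper intends. The paper omits the proof, remarking only that everything follows from (\ref{generateur infinitesimal L2}) and standard properties of $\mathcal{F}_k$. Your arguments for \textbf{i)} and \textbf{iii)} are correct. Part \textbf{ii)}, however, has a genuine gap at exactly the point you call the main obstacle.

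You claim that $\mathcal{F}_k(f)\mathcal{F}_k(g)\in L^2_k(\R^d)$ \emph{in the situation at hand}, because $\mathcal{F}_k(g)$ is bounded when $g\in L^1_k(\R^d)$. But $g\in L^1_k(\R^d)$ is not a hypothesis. Also, Appendix A.2 states (\ref{Dunkl trans convolution}) for two $L^2_k$-functions, not for $g\in L^1_k$. For $f,g\in H^{2\alpha}_k(\R^d)$ the product $\mathcal{F}_k(f)\mathcal{F}_k(g)$ is only in $L^1_k(\R^d)$. So $f\ast_kg$ is a $\mathcal{C}_0$-function, and in general it does not belong to $L^2_k(\R^d)$.

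For instance, take $f=g$ with $\mathcal{F}_k(f)(\xi)=\|\xi\|^{-a}\mathbf{1}_{\{\|\xi\|\leq 1\}}$ and $\frac{d+2\gamma}{4}\leq a<\frac{d+2\gamma}{2}$. Then $f\in H^{2\alpha}_k(\R^d)$, but $\mathcal{F}_k(f)^2\notin L^2_k(\R^d)$. If $f\ast_kf$ were in $L^2_k(\R^d)$, pairing with Schwartz functions and Parseval would force $\mathcal{F}_k(f\ast_kf)=\mathcal{F}_k(f)^2\in L^2_k(\R^d)$. So the left-hand side of \textbf{ii)} is not even defined via the $L^2_k$-generator, and no injectivity argument in $L^2_k(\R^d)$ can close. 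Interpreting the convolution \emph{through the appendix so that its transform is the product} does not get around this.

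There are two honest repairs.

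\textbf{First repair.} Add the hypothesis $g\in L^1_k(\R^d)$. Then $|\mathcal{F}_k(g)|\leq\|g\|_{L^1_k(\R^d)}$ gives $f\ast_kg\in H^{2\alpha}_k(\R^d)$, and your argument goes through verbatim.

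\textbf{Second repair.} Keep $f,g\in H^{2\alpha}_k(\R^d)$ and prove \textbf{ii)} in $\mathcal{C}_0(\R^d)$ instead of $L^2_k(\R^d)$.
\begin{itemize}
\item Since $\|\cdot\|^{2\alpha}\mathcal{F}_k(f)\mathcal{F}_k(g)\in L^1_k(\R^d)$ (a product of two $L^2_k$-functions), (\ref{Dunkl trans convolution}) shows that both right-hand sides equal $c_k^{-2}\int_{\R^d}\|\xi\|^{2\alpha}\mathcal{F}_k(f)(\xi)\mathcal{F}_k(g)(\xi)E_k(ix,\xi)\omega_k(\xi)d\xi$.
\item For the left-hand side, Fubini (justified by $\|p_{k,\alpha}(t,x,\cdot)\|_{L^1_k(\R^d)}=1$ and $|E_k(iy,\xi)|\leq 1$) together with (\ref{Dunkl transf subrdinator}) gives $H_{k,\alpha}(t)(f\ast_kg)(x)=c_k^{-2}\int_{\R^d}e^{-t\|\xi\|^{2\alpha}}\mathcal{F}_k(f)(\xi)\mathcal{F}_k(g)(\xi)E_k(ix,\xi)\omega_k(\xi)d\xi$.
\item Then (\ref{Inquality Sec1}) and dominated convergence give uniform convergence of the difference quotients to the same integral. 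So the identity holds for the generator on $X=\mathcal{C}_0(\R^d)$.
\end{itemize}

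The paper's omitted proof is silent on this point as well. The phrase \emph{in} $L^2_k(\R^d)$ in \textbf{ii)} therefore needs one of these two readings.
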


In the next result, we calculate  the image of the  Dunkl kernel $E_k(i\xi,\cdot)\in \mathcal{C}_b(\R^d)$ (see Appendix A.1)  under  $(-\Delta_k)^{\alpha}$.
\begin{Prop} For any fixed $\xi\in \R^d$, we have
\begin{equation}\label{Dunkl kerenl-Lap}
(-\Delta_k)^{\alpha}E_k(i\xi,\cdot)=\|\xi\|^{2\alpha}E_k(i\xi,\cdot).
\end{equation}
Let $J_k$ be the generalized Bessel function defined by \cite{Opdam}:
$$
J_k(x,y):=\frac{1}{|W|}\sum_{g\in W}E_k(gx,y),\qquad g\in W.
$$
Then for every $\xi\in \R^d$
\begin{equation}\label{Opdam-Lap}
(-\Delta_k)^{\alpha}J_k(i\xi,\cdot)=\|\xi\|^{2\alpha}J_k(i\xi,\cdot).
\end{equation}
\end{Prop}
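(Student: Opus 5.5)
Since $E_k(i\xi,\cdot)$ is bounded but neither in $\mathcal{S}(\R^d)$ nor in $L^2_k(\R^d)$, we work with the generator on $X=\mathcal{C}_b(\R^d)$. Definition (\ref{Def-GI}) then asks for the limit of $t^{-1}\big(f-H_{k,\alpha}(t)f\big)$ with $f=E_k(i\xi,\cdot)$. The key ingredient is the action of the Dunkl heat semigroup on the Dunkl kernel:
\begin{equation*}
e^{s\Delta_k}E_k(i\xi,\cdot)(x)=\int_{\R^d}p_k(s,x,y)E_k(i\xi,y)\omega_k(y)dy=e^{-s\|\xi\|^2}E_k(i\xi,x).
\end{equation*}
This follows from the Dunkl transform of the heat kernel, $\mathcal{F}_k\big(p_k(s,x,\cdot)\big)(\eta)=e^{-s\|\eta\|^2}E_k(-ix,\eta)$, recalled in the proof of the first proposition. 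We evaluate it at $\eta=-\xi$ (or at $\xi$, depending on the sign convention of $\mathcal{F}_k$ in the Appendix). We then use the symmetry $E_k(-ix,-\xi)=E_k(ix,\xi)$ together with $E_k(ix,\xi)=E_k(i\xi,x)$. Keeping these conventions straight is the only delicate bookkeeping in the argument.

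Next we integrate against the subordinator. By (\ref{frac heat sgp}), Fubini's theorem (justified since $|E_k|\le1$ and $\eta_{\alpha,t}$ is a probability density) and (\ref{subordinator laplace trans}) with $\lambda=\|\xi\|^2$, we get
\begin{equation*}
H_{k,\alpha}(t)E_k(i\xi,\cdot)=\Big(\int_0^\infty e^{-s\|\xi\|^2}\eta_{\alpha,t}(s)ds\Big)E_k(i\xi,\cdot)=e^{-t\|\xi\|^{2\alpha}}E_k(i\xi,\cdot).
\end{equation*}
The same identity can also be read off directly from (\ref{subrdinator kernel}) and (\ref{Dunkl transf subrdinator}). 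It follows that
\begin{equation*}
t^{-1}\big(E_k(i\xi,\cdot)-H_{k,\alpha}(t)E_k(i\xi,\cdot)\big)=t^{-1}\big(1-e^{-t\|\xi\|^{2\alpha}}\big)E_k(i\xi,\cdot).
\end{equation*}
The right-hand side converges uniformly on $\R^d$ to $\|\xi\|^{2\alpha}E_k(i\xi,\cdot)$, again because $|E_k(i\xi,x)|\le1$. Hence $E_k(i\xi,\cdot)$ lies in the domain of the generator on $\mathcal{C}_b(\R^d)$, and (\ref{Dunkl kerenl-Lap}) holds.

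For (\ref{Opdam-Lap}) we use linearity of the limit. We have $J_k(i\xi,x)=|W|^{-1}\sum_{g\in W}E_k(ig\xi,x)$, using $E_k(gx,y)=E_k(x,g^{-1}y)$ and the symmetry of $E_k$ so that the group acts on the $\xi$ variable. We then apply (\ref{Dunkl kerenl-Lap}) to each term with $g\xi$ in place of $\xi$. Since $\|g\xi\|=\|\xi\|$ for every $g\in W$, all terms share the eigenvalue $\|\xi\|^{2\alpha}$, which gives (\ref{Opdam-Lap}).

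The main obstacle is making sure the setting is legitimate. Strong continuity of $e^{t\Delta_k}$ on $\mathcal{C}_b(\R^d)$ is asserted in the paper, but the argument above never uses it: the semigroup identity is an explicit scalar multiple, so the limit is computed by hand.
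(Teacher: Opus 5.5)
Your proof is correct and follows the paper's argument. Both compute $H_{k,\alpha}(t)E_k(i\xi,\cdot)=e^{-t\|\xi\|^{2\alpha}}E_k(i\xi,\cdot)$ from the Dunkl transform of the (subordinated) heat kernel and pass to the uniform limit in $\mathcal{C}_b(\R^d)$, and the $J_k$ case follows by linearity since $\|g\xi\|=\|\xi\|$. With the Appendix convention $\mathcal{F}_k(f)(\xi)=\int_{\R^d} f(x)E_k(-ix,\xi)\omega_k(x)dx$, evaluating at $-\xi$ is indeed the consistent choice. Also, $J_k(i\xi,\cdot)=|W|^{-1}\sum_{g\in W}E_k(ig\xi,\cdot)$ holds directly because $g$ is linear, so you do not need $E_k(gx,y)=E_k(x,g^{-1}y)$.
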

\emph{Proof:} Let $\xi\in \R^d$. By virtue of relation (\ref{Dunkl transf subrdinator}), we see that
$$
\forall\ t>0, \quad H_{k,\alpha}(t)\big(E_k(i\xi,\cdot)\big)(x)=\mathcal{F}_k\big(p_{k,\alpha}(t,x,\cdot)\big)(\xi)=e^{-t\|\xi\|^{2\alpha}}E_k(i\xi,x).
$$
Accordingly, the sequence of continuous bounded functions
$$
\Big\{t^{-1}\big(E_k(i\xi,\cdot)-H_{k,\alpha}(t)\big(E_k(i\xi,\cdot)\big)\Big\}_{t>0}
$$
converges uniformly to $\|\xi\|^{2\alpha}E_k(i\xi,\cdot)$ as $t\to0$. This gives (\ref{Dunkl kerenl-Lap}).

\fd

\begin{Example}
\end{Example}
In the rank one the root system is $\Phi=\{\pm1\}$, the reflection group is $W=\Z_2$ and the multiplicity function is a parameter $k> 0$.
By using (\ref{laplacian}), we deduce that the action of the $\Z_2$-Dunkl Laplacian is given by
$$
\Delta_k^{\Z_2}f(x)=f''(x)+2k\frac{f'(x)}{x}-k\frac{f(x)-f(-x)}{x^2}.
$$
Moreover, the $\Z_2$-Dunkl kernel can be written as \cite{Rosler4}
$$
E_k^{\Z_2}(i\xi,x)=j_{k-1/2}(x\xi)+\frac{ix\xi}{2k+1}j_{k+1/2}(x\xi),
$$

where $j_{\nu}$, $\nu>-1/2$, is the normalized Bessel function \cite{Watson}
\begin{equation*}
j_{\nu}(\lambda):=\Gamma(\nu+1)\sum_{n=0}^{\infty}\frac{(-1)^n\big(\lambda/2\big)^{2n}}{n!\Gamma(n+\nu+1)}.
\end{equation*}
In particular, the generalized $\Z_2$-Bessel function (\ref{Opdam-Lap}) is given by
$$
J_k^{\Z_2}(i\xi,x)=j_{k-1/2}(x\xi)
$$
and (\ref{Opdam-Lap}) implies that
$$
(-\Delta_k^{\Z_2})^{\alpha}j_{k-1/2}(i\xi,\cdot)=|\xi|^{2\alpha}j_{k-1/2}(i\xi,\cdot).
$$
\subsection{Fundamental solution}
In  this section, we are concerned with  the fundamental solution  of the fractional Dunkl Laplacian $(-\Delta_k)^{\alpha/2}$, for all $\alpha\in (0,d+2\gamma)$. The case where $\frac{\alpha}{2}$ is a positive integer (i.e., the polyharmonic Dunkl Lapalcian) was examined  in \cite{GalRejSifi}.\\
From \cite{GalRejSifi},  we recall that  the $\Delta_k$-Riesz kernel of index $\theta\in (0,d+2\gamma)$ is  defined by
\begin{equation}\label{Riesz kernel def}
R_{k,\theta}(x,y):=\frac{1}{\Gamma(\frac{\theta}{2})}\int_{0}^{\infty}t^{\frac{\theta}{2}-1}p_k(t,x,y)dt,
\end{equation}
with $p_k(t,x,y)$ being the $\Delta_k$-heat kernel (\ref{heat kernel 1 Intr}).
According to \cite{GalRejSifi},   relation (\ref{Riesz kernel def}) can be rewritten as
\begin{equation}\label{Riesz kernel def1}
R_{k,\theta}(x,y)=A_{k,d,\theta}\int_{\R^d}\big(d(x,y,z)\big)^{\theta-d-2\gamma}d\mu_y(z),
\end{equation}
where $\mu_y$ is the measure (\ref{Vk measure}) and $A_{k,d,\theta}$ is the  positive constant, expressed in   terms of the constants $c_k$ from (\ref{Mehta}) and $d_k$ from (\ref{dk}), given by
\begin{equation}\label{kappa}
A_{k,d,\theta}=\frac{2^{\frac{d}{2}+\gamma-\theta}\Gamma(\frac{d+2\gamma-\theta}{2})}{c_k\Gamma(\theta/2)}
=\frac{2^{1-\theta}\Gamma(\frac{d+2\gamma-\theta}{2})}{\Gamma(d/2+\gamma)\Gamma(\theta/2)d_k}.
\end{equation}
Observe that if $x=0$, then (\ref{Riesz kernel def1}) reduces to $R_{k,\theta}(0,\cdot)=A_{k,d,\theta}\|\cdot\|^{\theta-d-2\gamma}$.  \\
Moreover, it was shown in \cite{GalRejSifi} that for each $x\in \R^d$, the function  $R_{k,\theta}(x,\cdot)\omega_k$ is locally integrable w.r.t. the Lebesgue measure on $\R^d$ and defines a tempered distribution.\\
The $\Delta_k$-Riesz potential of a nonnegative Radon measure $\mu$ satisfying the following decay:
\begin{equation}\label{condition Riesz}
\int_{\R^d}\frac{d\mu(y)}{\big(1+\|y\|\big)^{d+2\gamma-\theta}}\asymp \int_{\R^d}\frac{d\mu(y)}{1+\|y\|^{d+2\gamma-\theta}}<+\infty
\end{equation}
is defined by
\begin{equation}\label{Riesz pot def}
\mathcal{I}_{k,\theta}[\mu](x):=\int_{\R^d}R_{k,\theta}(x,y)d\mu(y).
\end{equation}
Note that the above integrability condition on the measure $\mu$ implies that $\mathcal{I}_{k,\theta}[\mu]\in L^1_{k,loc}(\R^d)$. When $\mu=f\omega_k$,  we simplify notation by writing $\mathcal{I}_{k,\theta}[f]$ instead of $\mathcal{I}_{k,\theta}[f\omega_k]$. For further details on the potential theory of the $\Delta_k$-Riesz kernel, we refer the reader to \cite{GalRejSifi}.\\
Let $u\in L^1_{k,loc}(\R^d)=L^1_{loc}(\R^d,\omega_k(x)dx)$. In particular,  the function $u\omega_k$ defines a distribution. If the linear functional
$$
\mathcal{D}(\R^d)\ni f\longmapsto \prs{u\omega_k}{(-\Delta_k)^{\alpha}f}:=\int_{\R^d}u(x)(-\Delta_k)^{\alpha}f(x)\omega_k(x)dx
$$
defines a distribution, then the symmetry property (\ref{symmetry}) of the nonlocal operator $(-\Delta_k)^{\alpha}$ enables us to define the weak fractional Dunkl Laplacian of the distribution $u\omega_k$  by assuming that
\begin{equation}\label{weak frac Lap}
\prs{(-\Delta_k)^{\alpha}(u\omega_k)}{f}:=\prs{u\omega_k}{(-\Delta_k)^{\alpha}f}.
\end{equation}

\begin{Thm}\label{fund solution thm} Let $\alpha\geq0$ and $0<\theta<d+2\gamma$.
\begin{description}[labelsep=0.5em,left=0em]
\item[i)] For every $f\in \mathcal{S}(\R^d)$, we have
\begin{equation}\label{frac lap-Riesz poten}
\mathcal{I}_{k,\theta}[(-\Delta_k)^{\frac{\alpha}{2}}f]=
\begin{cases}
\mathcal{I}_{k,\theta-\alpha}[f], & \hbox{if}\quad \theta>\alpha, \\
(-\Delta_k)^{\frac{\alpha-\theta}{2}}f, & \hbox{if}\quad  \theta\leq\alpha.
\end{cases}
\end{equation}
\item[ii)] For each $x\in \R^d$, the linear functional
$$
f\longmapsto \prs{(-\Delta_k)^{\frac{\alpha}{2}}\big(R_{k,\theta}(x,\cdot)\omega_k\big)}{f}:=\mathcal{I}_{k,\theta}[(-\Delta_k)^{\frac{\alpha}{2}}f](x)
$$
defines a tempered distribution. Moreover, in $\mathcal{S}'(\R^d)$, we have
\begin{equation}\label{fundamental solution}
(-\Delta_k)^{\frac{\alpha}{2}}\big(R_{k,\theta}(x,\cdot)\omega_k\big)=
\begin{cases}
R_{k,\theta-\alpha}(x,\cdot)\omega_k, & \hbox{if}\quad \theta>\alpha, \\
(-\Delta_k)^{\frac{\alpha-\theta}{2}}\delta_x, & \hbox{if}\quad  \theta\leq\alpha.
\end{cases}
\end{equation}
In particular, for each $\alpha\in (0,d+2\gamma)$, the locally integrable function  $R_{k,\alpha}(x,\cdot)\omega_k$ is the fundamental solution of the fractional Dunkl Laplacian of power $\frac{\alpha}{2}$.
\item[iii)]Let $f\in L^1_k(\R^d)\cap L^{\infty}_k(\R^d)$. Then the function $u=\mathcal{I}_{k,\alpha}[f]$ satisfies the $(-\Delta_k)^{\frac{\alpha}{2}}$-Poisson equation
$$
(-\Delta_k)^{\frac{\alpha}{2}}(u\omega_k)=f\omega_k\quad \text{in}\quad \mathcal{S}'(\R^d).
$$
\end{description}

\end{Thm}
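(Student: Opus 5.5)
The plan is to reduce everything to the Dunkl transform, using the fact that the Riesz kernel at the base point $0$ has Dunkl transform $\|\xi\|^{-\theta}$ and that $R_{k,\theta}(x,\cdot)$ is its Dunkl translate. First, from (\ref{Riesz kernel def}) and Fubini/Tonelli (all integrands are nonnegative), for $g\in\mathcal{S}(\R^d)$ we have
$$
\mathcal{I}_{k,\theta}[g](x)=\frac{1}{\Gamma(\theta/2)}\int_0^{\infty}t^{\frac{\theta}{2}-1}e^{t\Delta_k}g(x)\,dt .
$$
Inserting $e^{t\Delta_k}g(x)=c_k^{-2}\int e^{-t\|\xi\|^2}\mathcal{F}_k(g)(\xi)E_k(ix,\xi)\omega_k(\xi)d\xi$ and using $\int_0^\infty t^{\theta/2-1}e^{-t\|\xi\|^2}dt=\Gamma(\theta/2)\|\xi\|^{-\theta}$ gives
$$
\mathcal{I}_{k,\theta}[g](x)=c_k^{-2}\int_{\R^d}\|\xi\|^{-\theta}\mathcal{F}_k(g)(\xi)E_k(ix,\xi)\omega_k(\xi)d\xi .
$$
The Fubini step is justified because $\|\xi\|^{-\theta}$ is $\omega_k$-integrable near $0$, precisely since $\theta<d+2\gamma$ (polar coordinates give $\int_0^1 r^{d+2\gamma-1-\theta}dr$), and $|E_k|\le 1$. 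For general $g$ one only needs $\mathcal{F}_k(g)$ bounded near $0$ and $g$ decaying suitably; in particular it suffices that $\mathcal{F}_k(g)$ be bounded and rapidly decaying at infinity.

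For i), take $g=(-\Delta_k)^{\alpha/2}f$ with $f\in\mathcal{S}$. By Proposition \ref{frac Lap-trans} and Remark \ref{Remark-S}, $\mathcal{F}_k(g)=\|\xi\|^{\alpha}\mathcal{F}_k(f)$. This $g$ is not Schwartz, so the main obstacle is justifying the formula above for it. To handle this I would go back to the heat representation: $e^{t\Delta_k}g(x)=c_k^{-2}\int e^{-t\|\xi\|^2}\|\xi\|^{\alpha}\mathcal{F}_k(f)E_k\,\omega_k\,d\xi$ holds, since it is the convolution of $p_k(t,x,\cdot)$ with $g$ and $g\in L^2_k$ (or even bounded and $\mathcal{F}_k(g)\in L^1_k$). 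Then the double integral $\int_0^\infty\int t^{\theta/2-1}e^{-t\|\xi\|^2}\|\xi\|^{\alpha}|\mathcal{F}_k f|\omega_k$ is finite, equal to $\Gamma(\theta/2)\int\|\xi\|^{\alpha-\theta}|\mathcal{F}_kf|\omega_k$, which converges near $0$ because $\alpha-\theta>-(d+2\gamma)$. This also shows that the integral defining $\mathcal{I}_{k,\theta}[g]$ converges absolutely, since $|e^{t\Delta_k}g|$ is controlled by the same bound and Tonelli applies. We therefore get $\mathcal{I}_{k,\theta}[g](x)=c_k^{-2}\int\|\xi\|^{\alpha-\theta}\mathcal{F}_k(f)E_k(ix,\xi)\omega_k d\xi$. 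If $\theta>\alpha$, this is $\mathcal{I}_{k,\theta-\alpha}[f](x)$ by the first paragraph with $\theta-\alpha\in(0,d+2\gamma)$. If $\theta\le\alpha$, it is $(-\Delta_k)^{(\alpha-\theta)/2}f(x)$ by (\ref{frac Lap-Schwartz functions}) and Remark \ref{Remark-S}, including the trivial exponent $0$.

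For ii), the functional $f\mapsto\mathcal{I}_{k,\theta}[(-\Delta_k)^{\alpha/2}f](x)$ equals, by i), $c_k^{-2}\int\|\xi\|^{\alpha-\theta}\mathcal{F}_k(f)E_k(ix,\cdot)\omega_k$. This is bounded by a Schwartz seminorm of $\mathcal{F}_k f$, hence by one of $f$, because $\mathcal{F}_k$ is continuous on $\mathcal{S}$ and the weight $\|\xi\|^{\alpha-\theta}\omega_k$ is locally integrable with polynomial growth. It is therefore tempered. To identify it: when $\theta>\alpha$, i) gives $\mathcal{I}_{k,\theta-\alpha}[f](x)=\int R_{k,\theta-\alpha}(x,y)f(y)\omega_k(y)dy=\langle R_{k,\theta-\alpha}(x,\cdot)\omega_k,f\rangle$. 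When $\theta\le\alpha$, it gives $(-\Delta_k)^{(\alpha-\theta)/2}f(x)=\langle\delta_x,(-\Delta_k)^{(\alpha-\theta)/2}f\rangle$, which is the weak definition (\ref{weak frac Lap}) applied to $\delta_x$. Taking $\theta=\alpha$ gives $(-\Delta_k)^{\alpha/2}(R_{k,\alpha}(x,\cdot)\omega_k)=\delta_x$. Consistency with (\ref{weak frac Lap}) follows from the symmetry $R_{k,\theta}(x,y)=R_{k,\theta}(y,x)$, inherited from $p_k$, together with Fubini: $\langle R_{k,\theta}(x,\cdot)\omega_k,(-\Delta_k)^{\alpha/2}f\rangle=\mathcal{I}_{k,\theta}[(-\Delta_k)^{\alpha/2}f](x)$.

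For iii), let $f\in L^1_k\cap L^\infty_k$ and $\varphi\in\mathcal{S}$. Then $u=\mathcal{I}_{k,\alpha}[f]$ lies in $L^1_{k,loc}$, because the decay condition (\ref{condition Riesz}) holds for $f\omega_k$. We compute
$$
\langle u\omega_k,(-\Delta_k)^{\alpha/2}\varphi\rangle=\int f(y)\int R_{k,\alpha}(x,y)(-\Delta_k)^{\alpha/2}\varphi(x)\omega_k(x)dx\,\omega_k(y)dy=\int f(y)\varphi(y)\omega_k(y)dy,
$$
using Fubini, the symmetry of $R_{k,\alpha}$, and i) with $\theta=\alpha$. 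The delicate point is the Fubini justification, i.e.\ showing $\int\!\!\int R_{k,\alpha}(x,y)|f(y)||(-\Delta_k)^{\alpha/2}\varphi(x)|\omega_k\omega_k<\infty$. For this I would use the decay $|(-\Delta_k)^{\alpha/2}\varphi(x)|\lesssim(1+\|x\|)^{-d-2\gamma-\alpha}$, obtained from the Phillips formula (\ref{Frac Laplace-heat}) or from heat-kernel estimates. Combined with the bound $\int R_{k,\alpha}(x,y)(1+\|x\|)^{-d-2\gamma-\alpha}\omega_k(x)dx\lesssim 1$ uniformly in $y$, this makes everything integrable thanks to $f\in L^1_k$. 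If that decay estimate proves awkward, I would instead split $f=f\mathbf{1}_{B}+f\mathbf{1}_{B^c}$ and use $f\in L^\infty_k$ on the local part.
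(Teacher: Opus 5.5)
\textbf{The gap is in part i), at the step you yourself flag as the main obstacle.}

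For $g=(-\Delta_k)^{\alpha/2}f$ you need $\int_0^\infty\int_{\R^d}t^{\frac{\theta}{2}-1}p_k(t,x,y)\,|g(y)|\,\omega_k(y)\,dy\,dt<\infty$. This is required for two reasons: so that $\mathcal{I}_{k,\theta}[g](x)=\int_{\R^d}R_{k,\theta}(x,y)g(y)\omega_k(y)\,dy$ is an absolutely convergent integral, and so that you may interchange the $t$- and $y$-integrations.

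By Tonelli this quantity equals $\int_0^\infty t^{\frac{\theta}{2}-1}e^{t\Delta_k}|g|(x)\,dt$. Your Dunkl-transform computation, however, bounds only $|e^{t\Delta_k}g(x)|$. Since $|e^{t\Delta_k}g(x)|\le e^{t\Delta_k}|g|(x)$, a bound on the left-hand side says nothing about the right-hand side. So what you have actually shown is only this: $\frac{1}{\Gamma(\theta/2)}\int_0^\infty t^{\frac{\theta}{2}-1}e^{t\Delta_k}g(x)\,dt$ converges absolutely and equals $c_k^{-2}\int_{\R^d}\|\xi\|^{\alpha-\theta}\mathcal{F}_k(f)(\xi)E_k(ix,\xi)\omega_k(\xi)\,d\xi$. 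Its identification with $\mathcal{I}_{k,\theta}[g](x)$ is not justified.

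Fourier-side information cannot close this gap. Boundedness and fast decay of $\mathcal{F}_k(g)$ give only $g\in L^2_k(\R^d)\cap\mathcal{C}_0(\R^d)$. Meanwhile $R_{k,\theta}(x,y)\asymp\|y\|^{\theta-d-2\gamma}$ as $\|y\|\to\infty$, so for $\theta$ close to $d+2\gamma$ you need genuine spatial decay of $g$. For the same reason, your remark that bounded, rapidly decaying $\mathcal{F}_k(g)$ suffices is false. Parts ii) and iii) rest on i), and the functional in ii) is \emph{defined} through $\mathcal{I}_{k,\theta}[(-\Delta_k)^{\alpha/2}f](x)$, so the gap propagates to them.

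\textbf{The missing ingredient is a physical-space bound on $(-\Delta_k)^{\alpha/2}f$ for $f\in\mathcal{S}(\R^d)$.} The paper proves (\ref{norm frac Lapl Lp}) for $p=1$ and $p=\infty$ as follows. Split the Phillips formula (\ref{Frac Laplace-heat}) at $t=1$. On $(0,1)$, write $e^{t\Delta_k}h-h=\int_0^t e^{s\Delta_k}\Delta_k h\,ds$. Then use that $e^{t\Delta_k}$ is a contraction on $L^1_k(\R^d)$ and on $L^\infty$. This gives $g\in L^1_k(\R^d)\cap L^\infty_k(\R^d)$.

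Next, split the $t$-integral as $\int_0^1+\int_1^\infty$ and use $p_k(t,x,y)\le c_k^{-1}(2t)^{-\frac{d}{2}-\gamma}$; this is where $\theta<d+2\gamma$ enters. The result is $\sup_x\mathcal{I}_{k,\theta}[|g|](x)\lesssim\|g\|_\infty+\|g\|_{L^1_k(\R^d)}$, which is exactly what legitimizes your interchange.

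The pointwise decay $|(-\Delta_k)^{\alpha/2}\varphi(x)|\lesssim(1+\|x\|)^{-d-2\gamma-\alpha}$ that you invoke in iii) would also repair i), since it implies $g\in L^1_k\cap L^\infty_k$. But you leave it unproved and do not use it in i). The $L^1_k$ bound is all that is needed, and it is easier to obtain in the Dunkl setting than off-diagonal decay.

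\textbf{Once i) is repaired, the rest of your argument works and differs only mildly from the paper.}
\begin{itemize}
\item In ii), you obtain temperedness from the Fourier representation and the continuity of $\mathcal{F}_k$ on $\mathcal{S}(\R^d)$. The paper instead uses the sup bound on $\mathcal{I}_{k,\theta}$ together with (\ref{norm frac Lapl Lp}).
\item In iii), you dominate the double integral by $\|f\|_{L^1_k(\R^d)}\sup_y\mathcal{I}_{k,\alpha}[h](y)$, with $h$ the decay majorant. The paper uses $\|\mathcal{I}_{k,\alpha}[|f|]\|_\infty\,\|(-\Delta_k)^{\alpha/2}\psi\|_{L^1_k(\R^d)}$. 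This needs $f\in L^\infty_k$, but only the $L^1_k$ bound on $(-\Delta_k)^{\alpha/2}\psi$ rather than pointwise decay.
\end{itemize}
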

We need the following two lemmas.
\begin{Lemma}
If $f$ and its Dunkl transform $\mathcal{F}_k(f)$ are both in $L^1_k(\R^d)$, then for all $x\in \R^d$,
\begin{equation}\label{potential-heat sem2}
\mathcal{I}_{k,\theta}[f](x)=\frac{1}{\Gamma(\theta/2)}\int_0^{\infty}e^{t\Delta_k}f(x)t^{\frac{\theta}{2}-1}dt
=c_k^{-2}\int_{\R^d}\|\xi\|^{-\theta}\mathcal{F}_k(f)(\xi)E_k(ix,\xi)\omega_k(\xi)d\xi.
\end{equation}
Moreover,  we have
\begin{equation}\label{inequality fund solution}
\|\mathcal{I}_{k,\theta}[f]\|_{\infty}\leq \|\mathcal{I}_{k,\theta}[|f|]\|_{\infty}\lesssim \|f\|_{\infty}+\|f\|_{L^1_k(\R^d)}.
\end{equation}
\end{Lemma}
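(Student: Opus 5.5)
The first equality comes from the definitions (\ref{Riesz kernel def}) and (\ref{Riesz pot def}) via Tonelli/Fubini. We have
$$
\mathcal{I}_{k,\theta}[f](x)=\frac{1}{\Gamma(\theta/2)}\int_{\R^d}\int_0^\infty t^{\frac{\theta}{2}-1}p_k(t,x,y)\,dt\, f(y)\omega_k(y)dy,
$$
and $e^{t\Delta_k}f(x)=\int_{\R^d}p_k(t,x,y)f(y)\omega_k(y)dy$. Fubini is justified once we know that $\int_0^\infty t^{\theta/2-1}\int_{\R^d}p_k(t,x,y)|f(y)|\omega_k(y)dy\,dt<\infty$, which is exactly $\mathcal{I}_{k,\theta}[|f|](x)<\infty$. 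So we first prove the bound (\ref{inequality fund solution}) for $|f|$, and that bound also gives the exchange of integrals.

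For the bound, split the $t$-integral at $t=1$. On $(0,1)$, use $\|p_k(t,x,\cdot)\|_{L^1_k}=1$ to get $\int_{\R^d}p_k(t,x,y)|f(y)|\omega_k(y)dy\le \|f\|_\infty$. This contributes at most $\|f\|_\infty\int_0^1t^{\theta/2-1}dt=\frac{2}{\theta}\|f\|_\infty$. On $(1,\infty)$, use the sup bound $p_k(t,x,y)\le (2t)^{-d/2-\gamma}c_k^{-1}$, which holds since $\mu_y$ is a probability measure and the exponential is at most $1$. This contributes at most $C\|f\|_{L^1_k}\int_1^\infty t^{\theta/2-1-d/2-\gamma}dt$, and the integral is finite because $\theta<d+2\gamma$. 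The estimate is uniform in $x$, which proves (\ref{inequality fund solution}), including the trivial first inequality $|\mathcal{I}_{k,\theta}[f]|\le \mathcal{I}_{k,\theta}[|f|]$ from positivity of the kernel.

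For the second equality, write $e^{t\Delta_k}f(x)=c_k^{-2}\int_{\R^d}e^{-t\|\xi\|^2}\mathcal{F}_k(f)(\xi)E_k(ix,\xi)\omega_k(\xi)d\xi$. This is the heat analogue of (\ref{Dunkl Trans FHS}), valid since $f,\mathcal{F}_k(f)\in L^1_k$, by the inversion formula together with $\mathcal{F}_k(p_k(t,x,\cdot))(\xi)=e^{-t\|\xi\|^2}E_k(-ix,\xi)$. Then exchange the $t$ and $\xi$ integrals using $\frac{1}{\Gamma(\theta/2)}\int_0^\infty t^{\theta/2-1}e^{-t\|\xi\|^2}dt=\|\xi\|^{-\theta}$.

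The main obstacle is justifying this last Fubini step. It requires $\int_{\R^d}\|\xi\|^{-\theta}|\mathcal{F}_k(f)(\xi)|\omega_k(\xi)d\xi<\infty$. Near infinity this follows from $\mathcal{F}_k(f)\in L^1_k$. Near the origin it follows from $|\mathcal{F}_k(f)|\le\|f\|_{L^1_k}$ together with local integrability of $\|\xi\|^{-\theta}\omega_k(\xi)$, which holds by polar coordinates and homogeneity of $\omega_k$ of degree $2\gamma$ because $\theta<d+2\gamma$. With absolute integrability in hand, Fubini applies and gives the formula for every $x$.
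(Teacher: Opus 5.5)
Your proposal is correct and follows essentially the same route as the paper: split the $t$-integral at $t=1$, use $\|p_k(t,x,\cdot)\|_{L^1_k(\R^d)}=1$ on $(0,1)$ and $p_k(t,x,y)\le c_k^{-1}(2t)^{-d/2-\gamma}$ on $(1,\infty)$, then apply Fubini twice; you also supply the integrability check for $\|\xi\|^{-\theta}|\mathcal{F}_k(f)(\xi)|\omega_k(\xi)$, which the paper leaves implicit. The one point you should state explicitly is why $\|f\|_\infty<\infty$ (the paper's ``noting that $f$ is bounded''), since without it your bound does not give the finiteness Fubini needs: it follows from the inversion formula, which gives $\|f\|_\infty\le c_k^{-2}\|\mathcal{F}_k(f)\|_{L^1_k(\R^d)}$.
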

\emph{Proof:} From (\ref{Riesz kernel def}), we can  write
$$
\mathcal{I}_{k,\theta}[|f|](x)=\frac{1}{\Gamma(\theta/2)}\int_0^{\infty}e^{t\Delta_k}|f|(x)t^{\frac{\theta}{2}-1}dt
=\frac{1}{\Gamma(\theta/2)}\int_0^{1}\quad+\quad\frac{1}{\Gamma(\theta/2)} \int_1^{\infty}
$$
Noting  that $f$ is bounded  and using the inequality  $0\leq p_k(t,x,y)\leq c_k^{-1}(2t)^{-\frac{d}{2}-\gamma}$ (see relation (\ref{heat kernel 1 Intr}))  we deduce that
$$
\forall\ x\in\R^d, \quad \mathcal{I}_{k,\theta}[|f|](x)\lesssim \|f\|_{\infty}+\|f\|_{L^1_k(\R^d)}.
$$
Therefore Fubini's theorem is applied to get the first equality in (\ref{potential-heat sem2}). Moreover, we obtain the second equality by using the relation
$$
e^{t\Delta_k}f(x)= c_k^{-2}\int_{\R^d}e^{-t\|\xi\|^2}\mathcal{F}_k(f)(\xi)E_k(ix,\xi)\omega_k(\xi)d\xi
$$
together with Fubini's theorem.

\fd

\begin{Lemma}  For every $\theta>0$ and every $1\leq p\leq +\infty$, there exists  a positive constant $C=C(p,\theta)$ such that
\begin{equation}\label{norm frac Lapl Lp}
\big\|(-\Delta_k)^{\theta}f\big\|_{L^p_k(\R^d)}\leq C \left(\big\|\Delta_k^{\lfloor\theta\rfloor}f\big\|_{L^p_k(\R^d)}+\big\|\Delta_k^{\lfloor\theta\rfloor+1}f\big\|_{L^p_k(\R^d)}\right),\quad \forall\
f\in \mathcal{S}(\R^d).
\end{equation}

\end{Lemma}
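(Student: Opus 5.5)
Write $\theta=n+\beta$ with $n=\lfloor\theta\rfloor$ and $\beta=\theta-n\in[0,1)$. If $\beta=0$, then $(-\Delta_k)^{\theta}f=(-1)^n\Delta_k^nf$ and there is nothing to prove. For $\beta\in(0,1)$, Remark \ref{Remark-S} gives
$$
(-\Delta_k)^{\theta}f=(-\Delta_k)^{\beta}g,\qquad g:=(-\Delta_k)^{n}f=(-1)^n\Delta_k^nf\in\mathcal{S}(\R^d).
$$
It is therefore enough to show that for every $\beta\in(0,1)$ and every $g\in\mathcal{S}(\R^d)$,
$$
\|(-\Delta_k)^{\beta}g\|_{L^p_k(\R^d)}\leq C\big(\|g\|_{L^p_k(\R^d)}+\|\Delta_kg\|_{L^p_k(\R^d)}\big).
$$
Since $\Delta_kg=(-1)^n\Delta_k^{n+1}f$, this is exactly (\ref{norm frac Lapl Lp}).

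To prove the reduced estimate we use the Phillips-type formula (\ref{Frac Laplace-heat}):
$$
(-\Delta_k)^{\beta}g=\frac{1}{\Gamma(-\beta)}\int_0^{\infty}\big(e^{t\Delta_k}g-g\big)\frac{dt}{t^{\beta+1}}.
$$
We split the $t$-integral at $t=1$ and bound each piece in $L^p_k$ with Minkowski's integral inequality. For $p<\infty$, Minkowski applies to this integral because the integrand is jointly measurable. For $p=\infty$, the same bound holds pointwise: in the expression above, simply take sup norms inside the integral.

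\textbf{Large times} $t\geq1$. The Dunkl heat semigroup is a contraction on $L^p_k(\R^d)$ for every $1\le p\le\infty$, since $p_k\geq0$ and $\int p_k(t,x,y)\omega_k(y)dy=1$. Hence $\|e^{t\Delta_k}g-g\|_p\le 2\|g\|_p$, and this piece contributes at most $2\|g\|_p\int_1^\infty t^{-\beta-1}dt=\frac{2}{\beta}\|g\|_p$.

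\textbf{Small times} $t\leq1$. We use the identity
$$
e^{t\Delta_k}g-g=\int_0^te^{s\Delta_k}\Delta_kg\,ds .
$$
To justify it pointwise for $g\in\mathcal{S}(\R^d)$, write both sides through the Dunkl transform: $\partial_s e^{s\Delta_k}g=e^{s\Delta_k}\Delta_kg$ because $\mathcal{F}_k(\Delta_kg)=-\|\xi\|^2\mathcal{F}_k(g)$. Contractivity then gives $\|e^{t\Delta_k}g-g\|_p\le t\|\Delta_kg\|_p$. The small-time piece is therefore bounded by $\|\Delta_kg\|_p\int_0^1t^{-\beta}dt=\frac{1}{1-\beta}\|\Delta_kg\|_p$.

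Adding the two pieces yields the estimate with $C=|\Gamma(-\beta)|^{-1}\max\big(\frac{2}{\beta},\frac{1}{1-\beta}\big)$, where $\beta=\theta-\lfloor\theta\rfloor$. The main technical point is the justification of $L^p$-contractivity and of the identity $e^{t\Delta_k}g-g=\int_0^te^{s\Delta_k}\Delta_kg\,ds$ for all $p$, including $p=1$ and $p=\infty$. Both are handled by the positivity and normalization of $p_k$ together with the Dunkl-transform representation of the semigroup on Schwartz functions, which is valid since $g$ and $\Delta_kg$ lie in $\mathcal{S}(\R^d)$.
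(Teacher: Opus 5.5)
Your proof is correct and follows essentially the same route as the paper. Both reduce to the fractional part in $(0,1)$ by applying the estimate to $\Delta_k^{\lfloor\theta\rfloor}f$, split the Phillips-type semigroup formula at $t=1$, control the large-time part by $L^p_k$-contractivity of $e^{t\Delta_k}$, and control the small-time part via $e^{t\Delta_k}f-f=\int_0^te^{s\Delta_k}\Delta_kf\,ds$. The only difference is that you use Minkowski's integral inequality uniformly for all $p$ where the paper runs a separate Jensen argument for $1<p<\infty$; your version is slightly cleaner and gives a constant independent of $p$.
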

\emph{Proof:} The result holds true when $\theta$ is a positive integer. Suppose  $\theta\notin\N$. By replacing $f$ with $\Delta_k^{\lfloor\theta\rfloor}f\in \mathcal{S}(\R^d)$, it suffices to consider  $\theta\in (0,1)$. From the semigroup formula (\ref{Frac Laplace-heat}), let us write
\begin{align*}
\Gamma(-\theta)(-\Delta_k)^{\theta}f(x)&=\int_0^{1}\Big(e^{t\Delta_k}f(x)-f(x)\Big)\frac{dt}{t^{\theta+1}}
+\int_1^{\infty}\Big(e^{t\Delta_k}f(x)-f(x)\Big)\frac{dt}{t^{\theta+1}}\\
&=A[f](x)+B[f](x).
\end{align*}
When  $p=1$ or $p=\infty$ we have
$$
\|B[f]\|_{L_k^p(\R^d)}\leq \int_1^{\infty}\Big(\|e^{t\Delta_k}f\|_{L^p_k(\R^d)}+\|f\|_{L^p_k(\R^d)}\Big)\frac{dt}{t^{\theta+1}}\leq \frac{2}{\theta}\|f\|_{L^p_k(\R^d)}.
$$
When $1<p<+\infty$, then the Jensen inequality combined with the $L^p_k(\R^d)$-boundedness property  of the semigroup $\big\{e^{t\Delta_k}\big\}_t$ yields  that
$$
\|B[f]\|^p_{L_k^p(\R^d)}\leq \theta^{p-1}\int_1^{\infty}\Big(\|e^{t\Delta_k}f\|^p_{L^p_k(\R^d)}+\|f\|^p_{L^p_k(\R^d)}\Big)\frac{dt}{t^{\theta+1}}\leq 2\theta^{p-2}\|f\|_{L^p_k(\R^d)}.
$$
Furthermore, since $\partial_se^{s\Delta_k}=e^{s\Delta_k}\Delta_k$, it follows that
\begin{align*}
\big|A[f](x)\big|&=\bigg|\int_0^{1}\bigg(\int_0^te^{s\Delta_k}\Delta_kf(x)ds\bigg)\frac{dt}{t^{\theta+1}}\bigg|
\leq\frac{1}{\theta}\int_0^{1}e^{s\Delta_k}|\Delta_kf|(x) (s^{-\theta}-1)ds\\
&\leq \frac{1}{\theta}\int_0^{1}e^{s\Delta_k}|\Delta_kf|(x) s^{-\theta}ds
\end{align*}
Therefore, as above, obtain
$$
\begin{cases}
\|A[f]\|_{L^p_k(\R^d)}\leq \theta^{-1}(1-\theta)^{-1}\|\Delta_kf\|_{L^p_k(\R^d)}, \hspace{0.5cm}& \hbox{if}\quad p=1\ \text{or}\ p=\infty;\\
\|A[f]\|^p_{L^p_k(\R^d)}\leq(1-\theta)^{p-2}\theta^{-p}\|\Delta_kf\|^p_{L^p_k(\R^d)}, \hspace{0.5cm}& \hbox{if}\quad 1<p<+\infty.
\end{cases}
$$
\fd

\medskip

\noindent \emph{Proof of Theorem \ref{fund solution thm}:}
\textbf{i)} Take $f\in \mathcal{S}(\R^d)$. Using  inequality (\ref{norm frac Lapl Lp}) and  the fact that $\mathcal{F}_k\big((-\Delta_k)^{\frac{\alpha}{2}}f\big)=\|\cdot\|^{\alpha}\mathcal{F}_k(f)\in L^1_k(\R^d)$,  we can replace $f$ by $(-\Delta_k)^{\frac{\alpha}{2}}f$ in (\ref{potential-heat sem2}) to write
$$
\mathcal{I}_{k,\theta}[(-\Delta_k)^{\frac{\alpha}{2}}f](x)=c_k^{-2}\int_{\R^d}\|\xi\|^{\alpha-\theta}\mathcal{F}_k(f)(\xi)E_k(ix,\xi)\omega_k(\xi)d\xi.
$$
Therefore, applying (\ref{potential-heat sem2})  when $\theta>\alpha$, the inversion formula for the Dunkl transform when $\theta=\alpha$, and Remark \ref{Remark-S} when $\theta<\alpha$, we get the desired result.

\medskip

\noindent\textbf{ii)} Because of (\ref{frac lap-Riesz poten}), it is enough to check  that for
each $x\in \R^d$, the linear functional
$$
f\longmapsto \prs{R_{k,\theta}(x,\cdot)\omega_k}{(-\Delta_k)^{\frac{\alpha}{2}}f}=\mathcal{I}_{k,\theta}[(-\Delta_k)^{\frac{\alpha}{2}}f](x)
$$
defines a tempered distribution.\\
From inequalities (\ref{inequality fund solution}) and (\ref{norm frac Lapl Lp}), we can find a positive constant
$C=C(k,d,\alpha,\theta)$ such that
\begin{align*}
\big|\mathcal{I}_{k,\theta}[(-\Delta_k)^{\frac{\alpha}{2}}f](x)\big|&\leq C \big(\|\Delta_k^{m}f\|_{\infty}+\|\Delta_k^{m+1}f\|_{\infty}
+\|\Delta_k^{m}f\|_{L^1_k(\R^d)}+\|\Delta_k^{m+1}f\|_{L^1_k(\R^d)}\big),
\end{align*}
with $m=\lfloor\frac{\alpha}{2}\rfloor$.\\
Accordingly,  this inequality together with the continuity of the polyharmonic Dunkl Laplacian  $\Delta_k^m=c_k^{-2}\mathcal{F}_k^{-1}\big(\|.\|^{2m}\mathcal{F}_k\big)$ on the Schwartz space,  implies  that $(-\Delta_k)^{\frac{\alpha}{2}}\big(R_{k,\theta}(x,\cdot)\omega_k\big)\in \mathcal{S}'(\R^d)$.\\
Finally, note that   inequality (\ref{norm frac Lapl Lp}) (for $p=\infty$) shows  that for any $\alpha>0$ and any $x\in \R^d$, the linear functional
$$
\prs{(-\Delta_k)^{\alpha}\delta_x}{f}:=(-\Delta_k)^{\alpha}f(x),\quad \quad \quad \forall\ f\in \mathcal{S}(\R^d),
$$
is also a tempered distribution.

\medskip

\noindent \textbf{iii)} For $\psi\in \mathcal{S}(\R^d)$, consider the linear functional
$$
\prs{T}{\psi}:=\prs{(-\Delta_k)^{\frac{\alpha}{2}}\big(\mathcal{I}_{k,\alpha}[f]\omega_k\big)}{\psi}
:=\int_{\R^d}\mathcal{I}_{k,\alpha}[f](x)(-\Delta_k)^{\frac{\alpha}{2}}\psi(x)\omega_k(x)dx.
$$
By virtue of  inequalities (\ref{inequality fund solution}) and (\ref{norm frac Lapl Lp}), we see that $T$ satisfies
\begin{align*}
|\prs{T}{\psi}|&\leq \|\mathcal{I}_{k,\alpha}[f]\|_{\infty} \|(-\Delta_k)^{\frac{\alpha}{2}}\psi\|_{L^1_k(\R^d)}\\
&\leq C(\alpha)\|\mathcal{I}_{k,\alpha}[f]\|_{\infty} \big(\|\Delta_k^{m}\psi\|_{L^1_k(\R^d)}+\|\Delta_k^{m+1}\psi\|_{L^1_k(\R^d)}\big).
\end{align*}
Hence $T$ is a tempered distribution. In addition, by Fubini's theorem and (\ref{frac lap-Riesz poten}) we have
$$
\prs{T}{\psi}=\int_{\R^d}\mathcal{I}_{k,\alpha}[(-\Delta_k)^{\frac{\alpha}{2}}\psi](y)f(y)\omega_k(y)dy=\prs{f\omega_k}{\psi}.
$$

\fd

In the next result, we  express  the Riesz kernel (resp. potential) in terms of the fractional heat kernel (resp. semigroup).
\begin{Prop} Let $0<\alpha<1$ and $0<\theta<d+2\gamma$. Then
\begin{equation}\label{Riesz-frac heat}
R_{k,\theta}(x,y)=\frac{1}{\Gamma(\frac{\theta}{2\alpha})}\int_0^{\infty}t^{\frac{\theta}{2\alpha}-1}p_{k,\alpha}(t,x,y)dt,\quad x,y\in \R^d.
\end{equation}
If $f\in L^1_{k,-\frac{\theta}{2}}(\R^d)$, i.e. $f\in L^{1}_{k,loc}(\R^d)$ satisfies  (\ref{condition Riesz}), and $f$ is nonnegative, then
$$
\mathcal{I}_{k,\theta}[f]=\frac{1}{\Gamma(\frac{\theta}{2\alpha})}\int_0^{\infty}t^{\frac{\theta}{2\alpha}-1}e^{-t(-\Delta_k)^{\alpha}}fdt.
$$
\end{Prop}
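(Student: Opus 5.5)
The plan is to insert the subordination formula (\ref{subrdinator kernel}) into the right-hand side of (\ref{Riesz-frac heat}) and exchange the order of integration; everything is nonnegative, so Tonelli's theorem applies with no integrability check. Fixing $x,y$, this gives
\begin{equation*}
\frac{1}{\Gamma(\frac{\theta}{2\alpha})}\int_0^{\infty}t^{\frac{\theta}{2\alpha}-1}p_{k,\alpha}(t,x,y)dt
=\int_0^{\infty}p_k(s,x,y)\Big(\frac{1}{\Gamma(\frac{\theta}{2\alpha})}\int_0^{\infty}t^{\frac{\theta}{2\alpha}-1}\eta_{\alpha,t}(s)dt\Big)ds .
\end{equation*}
It then suffices to show that the inner integral equals $s^{\frac{\theta}{2}-1}/\Gamma(\frac{\theta}{2})$ for a.e. $s>0$. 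Substituting this into the outer integral yields exactly the definition (\ref{Riesz kernel def}) of $R_{k,\theta}(x,y)$.

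To identify the inner integral $m(s):=\frac{1}{\Gamma(\theta/2\alpha)}\int_0^\infty t^{\frac{\theta}{2\alpha}-1}\eta_{\alpha,t}(s)\,dt$, I compute its Laplace transform. For $\lambda>0$, Tonelli's theorem and (\ref{subordinator laplace trans}) give
\begin{equation*}
\int_0^\infty e^{-s\lambda}m(s)ds=\frac{1}{\Gamma(\frac{\theta}{2\alpha})}\int_0^\infty t^{\frac{\theta}{2\alpha}-1}e^{-t\lambda^{\alpha}}dt=\lambda^{-\theta/2}.
\end{equation*}
On the other hand, $\frac{1}{\Gamma(\theta/2)}\int_0^\infty e^{-s\lambda}s^{\frac{\theta}{2}-1}ds=\lambda^{-\theta/2}$ as well. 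Both functions are nonnegative measurable functions on $(0,\infty)$ with the same finite Laplace transform for every $\lambda>0$. Uniqueness of the Laplace transform for positive measures then gives $m(s)=s^{\frac{\theta}{2}-1}/\Gamma(\frac{\theta}{2})$ for a.e. $s$, which completes the proof of (\ref{Riesz-frac heat}). This identification is the only delicate point. It relies on the joint measurability of $(t,s)\mapsto\eta_{\alpha,t}(s)$, which I take as part of the standard construction of the subordinator, and on the Laplace-transform uniqueness argument. Note that $\theta<d+2\gamma$ is not needed here; it only guarantees that both sides are finite off the diagonal.

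For the potential statement, let $f\ge0$ satisfy (\ref{condition Riesz}). By (\ref{frac heat sgp}) and (\ref{subrdinator kernel}),
\begin{equation*}
e^{-t(-\Delta_k)^{\alpha}}f(x)=H_{k,\alpha}(t)f(x)=\int_{\R^d}p_{k,\alpha}(t,x,y)f(y)\omega_k(y)dy .
\end{equation*}
This holds pointwise as an integral of nonnegative functions, which is how the semigroup is understood for such $f$, possibly with value $+\infty$. I multiply by $t^{\frac{\theta}{2\alpha}-1}/\Gamma(\frac{\theta}{2\alpha})$ and integrate in $t$. Tonelli's theorem lets me exchange the $t$ and $y$ integrals, and (\ref{Riesz-frac heat}) then gives $\int_{\R^d}R_{k,\theta}(x,y)f(y)\omega_k(y)dy=\mathcal{I}_{k,\theta}[f](x)$. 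Condition (\ref{condition Riesz}) ensures that this common value is finite almost everywhere and locally integrable, but the identity itself holds in $[0,+\infty]$ by positivity alone.
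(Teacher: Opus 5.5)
Your proof is correct and follows the paper's argument: insert the subordination formula for $p_{k,\alpha}$, exchange the integrals by Tonelli, and use the moment identity $\int_0^{\infty}t^{\frac{\theta}{2\alpha}-1}\eta_{\alpha,t}(s)\,dt=\frac{\Gamma(\frac{\theta}{2\alpha})}{\Gamma(\frac{\theta}{2})}s^{\frac{\theta}{2}-1}$; the potential identity then follows from one more application of Tonelli. The only difference is that the paper quotes this moment identity from formula (23) of Bogdan et al., while you derive it yourself by Laplace-transform uniqueness, which is a valid self-contained justification.
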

\emph{Proof:}  It suffices  to justify formula (\ref{Riesz-frac heat}). According to formula (23) in \cite{Bogdan}, the subordinator $\{\eta_{\alpha.t}(s)ds\}_t$
(\ref{subordinator laplace trans}) satisfies
$$
\forall\ \theta>0, \quad \int_0^{\infty}t^{\frac{\theta}{2\alpha}-1}\eta_{\alpha,t}(s)dt=\frac{\Gamma(\frac{\theta}{2\alpha})}{\Gamma(\theta/2)}s^{\frac{\theta}{2}-1}.
$$
Thus, using this identity   together with (\ref{Riesz kernel def}), we see that (\ref{Riesz-frac heat}) holds.

\fd

\subsection{Generalized fractional Nash inequality}
The next result states a fractional Nash-type inequality involving the fractional Dunkl Laplacian. Note that a Nash inequality involving Dunkl operators was recently worked out in  \cite{Velicu}.
\begin{Prop} Let $\alpha>0$. There exists a positive constant $C=C(d,k,\alpha)$ such that for every $f\in H_k^{\alpha}(\R^d)\cap L^1_k(\R^d)$,
\begin{equation}\label{Nash inequality}
\|f\|_{L^2_k(\R^d)}^{1+\frac{2\alpha}{d+2\gamma}}\leq C\|(-\Delta_k)^{\frac{\alpha}{2}}f\|_{L^2_k(\R^d)}\|f\|_{L^1_k(\R^d)}^{\frac{2\alpha}{d+2\gamma}}.
\end{equation}
\end{Prop}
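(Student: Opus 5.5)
The plan is to follow the classical Fourier-splitting proof of Nash's inequality, with the Dunkl transform in place of the Fourier transform and the homogeneous dimension $d+2\gamma$ in place of $d$. Write $N:=d+2\gamma$. By Plancherel's formula for $\mathcal{F}_k$ and (\ref{norm Frac Lap}) with exponent $\alpha/2$, we have
$$
\|f\|_{L^2_k(\R^d)}=c_k^{-1}\|\mathcal{F}_k(f)\|_{L^2_k(\R^d)},\qquad \|(-\Delta_k)^{\frac{\alpha}{2}}f\|_{L^2_k(\R^d)}=c_k^{-1}\big\|\|\cdot\|^{\alpha}\mathcal{F}_k(f)\big\|_{L^2_k(\R^d)}.
$$
For $\alpha\geq 2$ the second identity is understood via Remark \ref{Remark-S} on the Fourier side, which is how $H^{\alpha}_k$ is defined anyway. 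Since $f\in L^1_k(\R^d)$ and $|E_k(-ix,\xi)|\leq 1$, we also have the pointwise bound $|\mathcal{F}_k(f)(\xi)|\leq \|f\|_{L^1_k(\R^d)}$ for all $\xi$.

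Fix $R>0$ and split the Plancherel integral at the ball $\{\|\xi\|\le R\}$. On the ball, the $L^\infty$ bound and homogeneity of $\omega_k$ (degree $2\gamma$) give
$$
\int_{\|\xi\|\le R}|\mathcal{F}_k(f)|^2\omega_k\,d\xi\leq \|f\|_{L^1_k}^2\int_{\|\xi\|\le R}\omega_k(\xi)d\xi=\frac{d_k}{N}R^{N}\|f\|_{L^1_k}^2 ,
$$
using polar coordinates and (\ref{dk}). Off the ball, $1\le R^{-2\alpha}\|\xi\|^{2\alpha}$, so the tail is at most $R^{-2\alpha}c_k^2\|(-\Delta_k)^{\alpha/2}f\|_{L^2_k}^2$. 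Combining,
$$
\|f\|_{L^2_k}^2\lesssim R^{N}\|f\|_{L^1_k}^2+R^{-2\alpha}\|(-\Delta_k)^{\frac{\alpha}{2}}f\|_{L^2_k}^2 .
$$

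Next optimize in $R$: take $R^{N+2\alpha}=\|(-\Delta_k)^{\alpha/2}f\|_{L^2_k}^2/\|f\|_{L^1_k}^2$. The trivial cases $f=0$ or $(-\Delta_k)^{\alpha/2}f=0$ are handled separately; the latter forces $\mathcal{F}_k(f)=0$ a.e., hence $f=0$. With this choice,
$$
\|f\|_{L^2_k}^2\lesssim \|(-\Delta_k)^{\frac{\alpha}{2}}f\|_{L^2_k}^{\frac{2N}{N+2\alpha}}\|f\|_{L^1_k}^{\frac{4\alpha}{N+2\alpha}} .
$$
Raising both sides to the power $(N+2\alpha)/(2N)$ gives exactly (\ref{Nash inequality}).

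There is no real obstacle. The only points needing care are the pointwise bound on $\mathcal{F}_k(f)$ for an $L^1_k\cap L^2_k$ function, where the $L^2$ Dunkl transform agrees with the integral formula (standard, Appendix A.1), and the meaning of $\|(-\Delta_k)^{\alpha/2}f\|$ for $f\in H^{\alpha}_k$ and general $\alpha>0$, which is taken as the Fourier-side norm.
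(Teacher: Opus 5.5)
Your proposal is correct and is essentially the paper's proof: split $\|\mathcal{F}_k(f)\|_{L^2_k(\R^d)}^2$ at $\|\xi\|=R$, bound the low frequencies by $\|f\|_{L^1_k(\R^d)}^2\, d_kR^{d+2\gamma}/(d+2\gamma)$ via $\|\mathcal{F}_k(f)\|_\infty\leq\|f\|_{L^1_k(\R^d)}$, bound the high frequencies by $R^{-2\alpha}c_k^2\|(-\Delta_k)^{\alpha/2}f\|_{L^2_k(\R^d)}^2$, then optimize in $R$ and apply Plancherel. Your choice of $R$ balances the two terms instead of taking the paper's exact minimizer, which only changes the constant, and your treatment of the degenerate cases is a small point the paper leaves implicit.
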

\noindent\emph{Proof:} The proof is an adaptation of the approach used in \cite{Nash, Velicu}.
Let $R>0$. We have
$$
\int_{\|\xi\|>R}|\mathcal{F}_k(f)(\xi)|^2\omega_k(\xi)d\xi\leq \frac{1}{R^{2\alpha}}\int_{\|\xi\|>R}\|\xi\|^{2\alpha}|\mathcal{F}_k(f)(\xi)|^2\omega_k(\xi)d\xi\leq \frac{c_k^2}{R^{2\alpha}}\|(-\Delta_k)^{\frac{\alpha}{2}}f\|^2_{L^2_k(\R^d)},
$$
where $c_k$ is the constant defined in (\ref{Mehta}).
On the other hand, since $f$ belongs to  $L^1_k(\R^d)$, we have  $\|\mathcal{F}_k(f)\|_{\infty}\leq \|f\|_{L^1_k(\R^d)}$ and, as a consequence,
$$
\int_{\|\xi\|\leq R}|\mathcal{F}_k(f)(\xi)|^2\omega_k(\xi)d\xi\leq
\left(\int_{\|\xi\|\leq R}\omega_k(\xi)d\xi\right)\|f\|_{L^1_k(\R^d)}^2=\frac{d_kR^{d+2\gamma}}{d+2\gamma}\|f\|_{L^1_k(\R^d)}^2.
$$
with $d_k$ being the normalization constant in (\ref{dk}).
Therefore, for every $R>0$, we obtain
\begin{align*}
\forall\ R>0,\quad \|\mathcal{F}_k(f)\|_{L^2_k(\R^d)}^2&\leq \frac{d_kR^{d+2\gamma}}{d+2\gamma}\|f\|_{L^1_k(\R^d)}^2+\frac{c_k^2}{R^{2\alpha}}\|(-\Delta_k)^{\frac{\alpha}{2}}f\|^2_{L^2_k(\R^d)}.
\end{align*}
Next, observing that the infimum of the right-hand side is attained at
$$
R=\left(\frac{2\alpha c_k^2 \|(-\Delta_k)^{\frac{\alpha}{2}}f\|^2_{L^2_k(\R^d)}}{d_k\|f\|_{L^1_k(\R^d)}^2}\right)^{\frac{1}{d+2\gamma+2\alpha}},
$$
we conclude that
$$
\|\mathcal{F}_k(f)\|_{L^2_k(\R^d)}\lesssim \|f\|_{L^1_k(\R^d)}^{\frac{2\alpha}{d+2\gamma+2\alpha}}\|(-\Delta_k)^{\frac{\alpha}{2}}f\|^{\frac{d+2\gamma}{d+2\gamma+2\alpha}}_{L^2_k(\R^d)}.
$$
Finally, using Plancherel's formula for the Dunkl transform, the desired inequality follows.

\fd

\medskip

The following result presents further $L^p_k(\R^d)$-inequalities involving the fractional Dunkl Laplacian.

\begin{Prop} Let $0<\theta<d+2\gamma$ and $\alpha\geq\theta$.
\begin{description}
\item[i)] There exists  a positive constant $C=C(k,d,\theta)$ such that
$$
\left\|(-\Delta_k)^{\frac{\alpha-\theta}{2}}f\right\|_{\infty}\leq C\left(\left\|(-\Delta_k)^{\frac{\alpha}{2}}f\right\|_{\infty}+ \left\|(-\Delta_k)^{\frac{\alpha}{2}}f\right\|_{L^1_k(\R^d)}\right),\quad \forall\ f\in \mathcal{S}(\R^d).
$$
\item[ii)] Let  $1<p<\frac{d+2\gamma}{\theta}$. Then there exists a positive constant $C=C(k,d,\theta,p)$ such that
\begin{equation}\label{Sobolev inequality Frac Lap}
\left\|(-\Delta_k)^{\frac{\alpha-\theta}{2}}f\right\|_{L^{\frac{p(d+2\gamma)}{d+2\gamma-\theta p}}_k(\R^d)}\leq C \left\|(-\Delta_k)^{\frac{\alpha}{2}}f\right\|_{L^p_k(\R^d)},\quad \forall\ f\in \mathcal{S}(\R^d).
\end{equation}
\end{description}
\end{Prop}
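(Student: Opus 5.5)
The plan is to reduce both inequalities to properties of the $\Delta_k$-Riesz potential, using Theorem \ref{fund solution thm} i). Take $f\in\mathcal{S}(\R^d)$ and set $g:=(-\Delta_k)^{\frac{\alpha}{2}}f$. Since $\theta\leq\alpha$, formula (\ref{frac lap-Riesz poten}) gives
$$
(-\Delta_k)^{\frac{\alpha-\theta}{2}}f=\mathcal{I}_{k,\theta}[g]\quad\text{pointwise on }\R^d .
$$
Both parts then become estimates for $\mathcal{I}_{k,\theta}[g]$ in terms of norms of $g$, with no further reference to $f$.

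For \textbf{i)}, I would first check that $g$ satisfies the hypotheses of the first Lemma. It lies in $L^1_k(\R^d)\cap L^\infty_k(\R^d)$ by inequality (\ref{norm frac Lapl Lp}) for $p=1,\infty$, and $\mathcal{F}_k(g)=\|\cdot\|^{\alpha}\mathcal{F}_k(f)\in L^1_k(\R^d)$. Inequality (\ref{inequality fund solution}) then gives $\|\mathcal{I}_{k,\theta}[g]\|_\infty\lesssim\|g\|_\infty+\|g\|_{L^1_k(\R^d)}$, which is exactly i). The constant depends only on $k,d,\theta$, since it comes from splitting the time integral at $t=1$ and using $p_k(t,x,y)\leq c_k^{-1}(2t)^{-d/2-\gamma}$.

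For \textbf{ii)}, I need a Hardy--Littlewood--Sobolev inequality for $\mathcal{I}_{k,\theta}$ on the space of homogeneous type $(\R^d,\omega_k(x)dx)$, which has homogeneous dimension $d+2\gamma$. I would prove it by Hedberg's method through the heat semigroup. By the first Lemma (or directly, for $|g|$),
$$
|\mathcal{I}_{k,\theta}[g](x)|\leq\frac{1}{\Gamma(\theta/2)}\Big(\int_0^T+\int_T^\infty\Big)e^{t\Delta_k}|g|(x)\,t^{\frac{\theta}{2}-1}dt .
$$
On $(0,T)$ I would bound $e^{t\Delta_k}|g|(x)$ by $C\,M_kg(x)$, where $M_k$ is a maximal function adapted to the Dunkl heat semigroup. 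The natural candidate is $M_kg:=\sup_{t>0}e^{t\Delta_k}|g|$, which is bounded on $L^p_k(\R^d)$ for $1<p\leq\infty$ by Stein's maximal theorem for symmetric Markovian semigroups. On $(T,\infty)$ I would use $\|e^{t\Delta_k}|g|\|_\infty\lesssim t^{-\frac{d+2\gamma}{2p}}\|g\|_{L^p_k(\R^d)}$, obtained from Hölder's inequality, the bound $p_k(t,x,y)\lesssim t^{-d/2-\gamma}$ and $\|p_k(t,x,\cdot)\|_{L^1_k(\R^d)}=1$, which together give $\|p_k(t,x,\cdot)\|_{L^{p'}_k(\R^d)}\lesssim t^{-\frac{d+2\gamma}{2p}}$. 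The tail integral converges precisely because $\theta p<d+2\gamma$. This yields
$$
|\mathcal{I}_{k,\theta}[g](x)|\lesssim T^{\theta/2}M_kg(x)+T^{\frac{\theta}{2}-\frac{d+2\gamma}{2p}}\|g\|_{L^p_k(\R^d)} .
$$
Choosing $T$ to balance the two terms gives the pointwise bound
$$
|\mathcal{I}_{k,\theta}[g](x)|\lesssim (M_kg(x))^{1-\frac{\theta p}{d+2\gamma}}\,\|g\|_{L^p_k(\R^d)}^{\frac{\theta p}{d+2\gamma}} .
$$
Raising this to the power $q=\frac{p(d+2\gamma)}{d+2\gamma-\theta p}$ and integrating, the maximal inequality gives (\ref{Sobolev inequality Frac Lap}).

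The step I expect to be the main obstacle is the maximal estimate on $(0,T)$. The Dunkl heat kernel is not a function of $\|x-y\|$, so I cannot dominate it by Euclidean averages. Stein's theorem for the semigroup maximal operator sidesteps this: I only need $e^{t\Delta_k}$ to be a symmetric Markov semigroup on $L^p_k(\R^d)$, which follows from the properties recalled in Appendix A.3. If that route fails, the fallback is to use the Gaussian-type upper bound for $p_k$ in terms of $d(x,y)=\min_{g\in W}\|x-gy\|$ and the volume $\omega_k(B(x,\sqrt t))$, which dominates $e^{t\Delta_k}|g|$ by a sum over $g\in W$ of Hardy--Littlewood maximal functions for the doubling measure $\omega_k(x)dx$.
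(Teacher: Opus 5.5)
Your proposal is correct. Part i) is the paper's proof: write $(-\Delta_k)^{\frac{\alpha-\theta}{2}}f=\mathcal{I}_{k,\theta}[(-\Delta_k)^{\frac{\alpha}{2}}f]$ via (\ref{frac lap-Riesz poten}) and apply (\ref{inequality fund solution}). You also check that $g$ and $\mathcal{F}_k(g)$ lie in $L^1_k(\R^d)$, which the paper leaves implicit.

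Part ii) takes a different route. The paper does not prove a Hardy--Littlewood--Sobolev inequality. It notes via (\ref{norm frac Lapl Lp}) that the right-hand side is finite, then quotes the $L^p_k\to L^{\frac{p(d+2\gamma)}{d+2\gamma-\theta p}}_k$ boundedness of $\mathcal{I}_{k,\theta}$ from the literature. You instead reprove that boundedness by a Varopoulos--Hedberg semigroup argument, and each step checks out:
\begin{itemize}
\item the interpolation $\|p_k(t,x,\cdot)\|_{L^{p'}_k(\R^d)}\le\|p_k(t,x,\cdot)\|_\infty^{1/p}\lesssim t^{-\frac{d+2\gamma}{2p}}$ holds;
\item the tail integral converges exactly when $\theta p<d+2\gamma$;
\item the choice of $T$ and the exponent bookkeeping $\big(1-\frac{\theta p}{d+2\gamma}\big)q=p$ are right;
\item Stein's maximal theorem applies, since the Dunkl heat semigroup is positive, symmetric, conservative and contractive on every $L^p_k(\R^d)$.
\end{itemize}

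What your route buys is that it uses only the on-diagonal bound $p_k(t,x,y)\le c_k^{-1}(2t)^{-\frac d2-\gamma}$ and the Markov and symmetry properties already recorded in Appendix A.3. So the obstacle you anticipated (the kernel not being a function of $\|x-y\|$) never arises, and your Gaussian-bound fallback is unnecessary. It also makes plain that the exponent $\frac{p(d+2\gamma)}{d+2\gamma-\theta p}$ comes purely from the ultracontractivity rate $(d+2\gamma)/2$. The paper's route is much shorter but puts all the real work into the cited references.
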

\emph{Proof:} \textbf{i)} Applying (\ref{inequality fund solution}) to the function $(-\Delta_k)^{\frac{\alpha}{2}}f$ and using (\ref{frac lap-Riesz poten}),  we deduce the desired estimate.\\
\textbf{ii)}  First, note that from (\ref{norm frac Lapl Lp}), the right hand side of (\ref{Sobolev inequality Frac Lap}) is finite. The stated inequality then follows directly from   (\ref{frac lap-Riesz poten}) and the boundedness of the $\Delta_k$-Riesz potential
$$
\mathcal{I}_{k,\theta}:L^p_k(\R^d) \longrightarrow L^{\frac{p(d+2\gamma)}{d+2\gamma-\theta p}}(\R^d)
$$
as established in  \cite{GalRejSifi, Hassani}.

\fd

\section{Extension problem for the fractional Dunkl Laplacian}
 The goal of this section is to study the $\Delta_k$-extension problem by using a fractional Poisson kernel and to show  the Caffarelli-Silvestre type  harmonic
characterization of the fractional Dunkl Laplacian.
\subsection{$\Delta_k$-Fractional Poisson kernel}
Following \cite{Caffarelli, Stinga thesis}, we  introduce the fractional Poisson kernel of index $\alpha>0$  in terms the $\Delta_k$-heat kernel (\ref{heat kernel 1 Intr}) by
\begin{equation}\label{frac Poisson kernel}
\mathcal{P}_{k,\alpha}(t,x,y):=\frac{t^{2\alpha}}{4^{\alpha}\Gamma(\alpha)}\int_0^{\infty}p_k(s,x,y)
e^{-\frac{t^2}{4s}}\frac{ds}{s^{\alpha+1}},\quad \quad\quad t>0,\quad  x,y\in\R^d.
\end{equation}
The corresponding $\alpha$-Poisson integral transform is
\begin{equation}\label{frac Poisson integral}
\mathcal{P}_{k,\alpha}[f](t,x):=\int_{\R^d}\mathcal{P}_{k,\alpha}(t,x,y)f(y)\omega_k(y)dy.
 \end{equation}
The kernel  $ \mathcal{P}_{k,\alpha}(t,\cdot,\cdot)$, $t>0$, inherits some properties from the Dunkl heat kernel:
\begin{itemize}[labelsep=0.5em,left=0em]
\item It is positive and symmetric.
\item By virtue of the equality
\begin{equation}\label{probability measure}
\frac{t^{2\alpha}}{4^{\alpha}\Gamma(\alpha)}\int_0^{\infty}e^{-\frac{t^2}{4s}}\frac{ds}{s^{\alpha+1}}=1,\quad \quad \quad \forall\ t>0,
\end{equation}
it follows that  $\|\mathcal{P}_{k,\alpha}(t,x,\cdot)\|_{L^1_k(\R^d)}=1$, for all $x\in\R^d$. Furthermore, from the property of the Dunkl transform of the $\Delta_k$-heat kernel (\ref{Dunkl trans heat}), the Dunkl transform of the function $\mathcal{P}_{k,\alpha}(t,x,\cdot)$ is given by
\begin{equation}\label{EP-PoissonK-DT}
\begin{split}
\forall\ \xi\in \R^d,\quad \quad \mathcal{F}_k\big(\mathcal{P}_{k,\alpha}(t,x,\cdot)\big)(\xi)&=\mathcal{P}_{k,\alpha}\big[E_k(-i\xi,\cdot)\big](t,x)\\
&=E_k(-ix,\xi)\frac{t^{2\alpha}}{4^{\alpha}\Gamma(\alpha)}\int_0^{\infty}e^{-s\|\xi\|^2}e^{-\frac{t^2}{4s}}\frac{ds}{s^{\alpha+1}}.
\end{split}
\end{equation}
\item For each $x$, the function $\mathcal{P}_{k,\alpha}(t,x,\cdot)$ lies in  $L^2_k(\R^d)$. Hence, using the action of the Dunkl translation on $L^2_k(\R^d)$-functions
    (\ref{translation L2}), along with (\ref{EP-PoissonK-DT}), we can  write
\begin{equation}\label{frac Poisson kernel}
\mathcal{P}_{k,\alpha}(t,x,y)=\tau_{-x}\mathcal{P}_{k,\alpha}(t,0,\cdot)(y).
\end{equation}
In particular, the $\alpha$-fractional Poisson integral of $f\in L^2_k(\R^d)$ can be expressed as a Dunkl convolution product:
\begin{equation}\label{Poisson integral convolution}
\mathcal{P}_{k,\alpha}[f](t,\cdot)=\mathcal{P}_{k,\alpha}(t,0,\cdot)\ast_kf.
\end{equation}
\end{itemize}
\medskip

The following result about the fractional Poisson kernel  is of particular importance on itself.
\begin{Prop} For $0<\theta<d+2\gamma$ and $t>0$, define  the $t$-Dunkl-Riesz  kernel of index $\theta$ by
\begin{equation}\label{t-Riesz kernel}
R_{k,\theta}(t,x,y)=\frac{1}{\Gamma(\theta/2)}\int_0^{\infty}p_k(s,x,y)e^{-\frac{t^2}{4s}}s^{\frac{\theta}{2}-1}ds, \quad x,y\in\R^d.
\end{equation}
If $\theta<\frac{d+2\gamma}{2}$, then for each $x\in \R^d$, the function $R_{k,\theta}(t,x,\cdot)$ belongs to the Sobolev space $H_k^{\theta}(\R^d)$, and
\begin{equation}\label{poisson kernel-Riesz kernel}
\mathcal{P}_{k,\frac{\theta}{2}}(t,x,\cdot)=(-\Delta_k)^{\theta/2}R_{k,\theta}(t,x,\cdot), \quad \text{in}\quad L^2_k(\R^d).
\end{equation}
\end{Prop}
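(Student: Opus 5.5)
The plan is to work entirely on the Dunkl transform side, using the characterization of $H_k^{\theta}(\R^d)$ and formula (\ref{generateur infinitesimal L2}) (applied with $2\alpha$ replaced by $\theta$; for $\theta\geq 2$ one uses Remark \ref{Remark-S}, or directly takes the Fourier-side definition). First, I would compute $\mathcal{F}_k\big(R_{k,\theta}(t,x,\cdot)\big)$. Since $p_k(s,x,\cdot)\geq0$ has $L^1_k$-norm $1$, Fubini applies in (\ref{t-Riesz kernel}) as long as $\int_0^\infty e^{-t^2/4s}s^{\theta/2-1}ds<\infty$. This holds for $t>0$ when $\theta<d+2\gamma$ is replaced by the decay coming from the heat kernel. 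More precisely, integrability in $y$ requires only $\int_0^\infty e^{-t^2/4s}s^{\theta/2-1}\|p_k(s,x,\cdot)\|_{L^1_k}ds$, and this diverges at $\infty$. So I will instead work in $L^2_k$: by $0\leq p_k\leq c_k^{-1}(2s)^{-d/2-\gamma}$ and Minkowski's integral inequality,
\[
\|p_k(s,x,\cdot)\|_{L^2_k}\leq\|p_k(s,x,\cdot)\|_{\infty}^{1/2}\lesssim s^{-\frac{d+2\gamma}{4}}.
\]
Hence $\int_0^\infty e^{-t^2/4s}s^{\theta/2-1}\|p_k(s,x,\cdot)\|_{L^2_k}ds<\infty$ exactly when $\theta/2<\frac{d+2\gamma}{4}$, i.e. $\theta<\frac{d+2\gamma}{2}$. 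This is where the hypothesis enters, and it shows $R_{k,\theta}(t,x,\cdot)\in L^2_k(\R^d)$ as an $L^2_k$-valued Bochner integral.

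Next, since $\mathcal{F}_k$ is an $L^2_k$-isometry up to $c_k$, it commutes with the Bochner integral. Using (\ref{Dunkl trans heat}), $\mathcal{F}_k(p_k(s,x,\cdot))(\xi)=e^{-s\|\xi\|^2}E_k(-ix,\xi)$, I get
\[
\mathcal{F}_k\big(R_{k,\theta}(t,x,\cdot)\big)(\xi)=\frac{E_k(-ix,\xi)}{\Gamma(\theta/2)}\int_0^\infty e^{-s\|\xi\|^2}e^{-\frac{t^2}{4s}}s^{\frac{\theta}{2}-1}ds .
\]
Comparing with (\ref{EP-PoissonK-DT}) at index $\alpha=\theta/2$, I want the scalar identity
\[
\|\xi\|^{\theta}\frac{1}{\Gamma(\theta/2)}\int_0^\infty e^{-s\|\xi\|^2-\frac{t^2}{4s}}s^{\frac{\theta}{2}-1}ds=\frac{t^{\theta}}{2^{\theta}\Gamma(\theta/2)}\int_0^\infty e^{-s\|\xi\|^2-\frac{t^2}{4s}}s^{-\frac{\theta}{2}-1}ds .
\]
This follows from the substitution $s=\frac{t^2}{4\|\xi\|^2 u}$ (for $\xi\neq0$). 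Both sides equal $2(t\|\xi\|/2)^{\theta/2}K_{\theta/2}(t\|\xi\|)/\Gamma(\theta/2)$, using $K_{\nu}=K_{-\nu}$. Consequently $\|\xi\|^{\theta}\mathcal{F}_k(R_{k,\theta}(t,x,\cdot))(\xi)=\mathcal{F}_k(\mathcal{P}_{k,\theta/2}(t,x,\cdot))(\xi)$. The right-hand side is in $L^2_k$, since $\mathcal{P}_{k,\theta/2}(t,x,\cdot)\in L^2_k$ as noted above, and this is also bounded. Hence $\|\cdot\|^{\theta}\mathcal{F}_k(R_{k,\theta}(t,x,\cdot))\in L^2_k$, i.e. $R_{k,\theta}(t,x,\cdot)\in H_k^{\theta}(\R^d)$. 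Finally, (\ref{generateur infinitesimal L2}) and injectivity of $\mathcal{F}_k$ on $L^2_k$ give (\ref{poisson kernel-Riesz kernel}).

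The main obstacle is the first step: justifying that $R_{k,\theta}(t,x,\cdot)$ lies in $L^2_k$ and that the Dunkl transform can be passed inside the $s$-integral. Note that $R_{k,\theta}(t,x,\cdot)$ need not be in $L^1_k$. I would handle this via the $L^2_k$ Bochner-integral argument above, using $\|p_k(s,x,\cdot)\|_{L^2_k}^2=p_k(2s,x,x)\lesssim s^{-(d+2\gamma)/2}$ from the Chapman–Kolmogorov property and the pointwise bound. If needed, an alternative is to truncate the $s$-integral to $[\epsilon,N]$, where Fubini is trivial, and then pass to the limit in $L^2_k$ by monotone convergence on the Fourier side.
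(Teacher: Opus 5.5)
Your argument is correct and follows the same route as the paper. You compute $\mathcal{F}_k\big(R_{k,\theta}(t,x,\cdot)\big)$ by passing the Dunkl transform through the $s$-integral, match it with (\ref{EP-PoissonK-DT}) at $\alpha=\theta/2$ via the substitution $s=t^2/(4\|\xi\|^2u)$, and conclude with (\ref{generateur infinitesimal L2}) and injectivity of $\mathcal{F}_k$ on $L^2_k(\R^d)$.

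The difference lies only in the technical justification. The paper gets $L^2_k$-membership from the closed form $R_{k,\theta}(t,0,y)=A_{k,d,\theta}(t^2+\|y\|^2)^{\frac{\theta-d-2\gamma}{2}}$ together with Dunkl translation, and then computes the transform weakly by pairing with $f\in L^2_k(\R^d)$ and applying Plancherel twice. Your single estimate $\|p_k(s,x,\cdot)\|_{L^2_k(\R^d)}\lesssim s^{-\frac{d+2\gamma}{4}}$, fed into a Bochner/Minkowski argument, delivers both the membership and the interchange at once. It also uses the hypothesis $\theta<\frac{d+2\gamma}{2}$ with the correct exponent: the paper's displayed bound has $s^{\frac{\theta-d-2\gamma}{2}-1}$, whereas the correct power is $s^{\frac{\theta}{2}-\frac{d+2\gamma}{4}-1}$.
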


Note that the kernel $R_{k,\theta}(t,x,y)$   is positive, symmetric in $x$ and $y$,  and it  approaches the $\Delta_k$-Riesz kernel (\ref{Riesz kernel def})-(\ref{Riesz kernel def1}) as $t\to0$:
$$
\forall\ x,y\in \R^d, \quad \lim_{t\to0}R_{k,\theta}(t,x,y)=R_{k,\theta}(x,y).
$$
\emph{Proof:} From expressions (\ref{t-Riesz kernel}) and (\ref{heat kernel 1 Intr}) for the kernels $R_{k,\theta}(t,x,y)$ and $p_{k}(s,x,y)$, we can see that
$$
R_{k,\theta}(t,0,y)=A_{k,d,\theta}\big(t^2+\|y\|^2\big)^{\frac{\theta-d-2\gamma}{2}},
$$
where $A_{k,d,\theta}$ is the constant (\ref{kappa}).
In particular, using spherical coordinates we see that the function $R_{k,\theta}(t,0,\cdots)$ belongs to $L^2_k(\R^d)$  whenever $d+2\gamma>2\theta$. Moreover, since the function $R_{k,\theta}(t,0,\cdot)$ is radial and of class $C^{\infty}$ on $\R^d$, the above formula together with (\ref{P6}) and Fubini's theorem  imply that $R_{k,\theta}(t,x,\cdot)=\tau_{-x}R_{k,\theta}(t,0,\cdot)$. Therefore, under the assumption $d+2\gamma>2\theta$, we conclude that $R_{k,\theta}(t, x,\cdot)\in L^2_k(\R^d)$ for all $x\in\R^d$. \\
We now claim that
\begin{equation}\label{t-Riesz kernel Fourier}
\mathcal{F}_k\big(R_{k,\theta}(t,x,\cdot))(\xi)=\frac{E_k(-ix,\xi)}{\Gamma(\theta/2)}\int_0^{\infty}e^{-s\|\xi\|^2}e^{-\frac{t^2}{4s}}s^{\frac{\theta}{2}-1}ds\quad\quad \text{in}\quad L^2_k(\R^d).
\end{equation}
Indeed, applying  the Cauchy-Schwarz inequality and Plancherel's formula for the Dunkl transform, we deduce for all $t>0$ and all $f\in L^2_k(\R^d)$ that
\begin{align*}
\frac{1}{\Gamma(\theta/2)}\int_0^{\infty}e^{s\Delta_k}|f|(x)e^{-\frac{t^2}{4s}}s^{\frac{\theta}{2}-1}ds
&\leq\frac{\|f\|_{L^2_k(\R^d)}}{\Gamma(\theta/2)}\int_0^{\infty}\|e^{-s\|.\|^2}\|_{L^2_k(\R^d)}e^{-\frac{t^2}{4s}}s^{\frac{\theta}{2}-1}ds\\
&\simeq \|f\|_{L^2_k(\R^d)}\int_0^{\infty}e^{-\frac{t^2}{4s}}s^{\frac{\theta-d-2\gamma}{2}-1}ds<+\infty.
\end{align*}
Thus, Fubini's theorem applies and  implies that for $\theta\in (0,d+2\gamma)$ and $f\in L^2_k(\R^d)$:
\begin{equation}\label{t-potential heat sgp}
 {\prs{R_{k,\theta}(t,x,\cdot)}{f}}_{L^2_k(\R^d)}=\frac{1}{\Gamma(\theta/2)}\int_0^{\infty}e^{s\Delta_k}f(x)e^{-\frac{t^2}{4s}}s^{\frac{\theta}{2}-1}ds.
\end{equation}
Hence, applying  Plancherel's formula twice and using (\ref{t-potential heat sgp}), we obtain
\begin{align*}
{\prs{\mathcal{F}_k\big(R_{k,\theta}(t,x,\cdot)\big)}{\mathcal{F}_k(f)}}_{L^2_k(\R^d)}&=c_k^{-2}{\prs{R_{k,\theta}(t,x,\cdot)}{f}}_{L^2_k(\R^d)}\\
&=\frac{1}{\Gamma(\theta/2)}\int_0^{\infty}{\prs{\mathcal{F}_k\big(p_k(s,x,\cdot)\big)}{\mathcal{F}_k(f)}}_{L^2_k(\R^d)}e^{-\frac{t^2}{4s}}s^{\frac{\theta}{2}-1}ds\\
&=\int_{\R^d}\left(\frac{E_k(-ix,\xi)}{\Gamma(\theta/2)}\int_0^{\infty}e^{-s\|\xi\|^2}e^{-\frac{t^2}{4s}}s^{\frac{\theta}{2}-1}ds\right)\mathcal{F}_k(f)(\xi)\omega_k(\xi)d\xi.
\end{align*}
This shows the claimed formula (\ref{t-Riesz kernel Fourier}). \\
Now, by substituting (\ref{t-Riesz kernel Fourier}) and  making the change of variables $s'=\frac{t^2}{4s\|\xi\|^2}$ into relation   (\ref{EP-PoissonK-DT}), we conclude that
$$
\mathcal{F}_k\big(\mathcal{P}_{k,\frac{\theta}{2}}(t,x,\cdot)\big)=\|\cdot\|^{\theta}\mathcal{F}_k\big(R_{k,\theta}(t,x,\cdot)\big).
$$
Thus, $R_{k,\theta}(t,x,\cdot)\in H_k^{\theta}(\R^d)$ and the stated formula (\ref{poisson kernel-Riesz kernel}) follows.

\fd

\begin{Remark} Taking $x=0$ in (\ref{poisson kernel-Riesz kernel}) and  substituting the explicit form of the constant $A_{k,d,\alpha}$ (\ref{kappa}), we deduce that identity (\ref{poisson kernel-Riesz kernel}) can be rewritten as
$$
(-\Delta_k)^{\frac{\theta}{2}}\left(t^2+\|\cdot\|^2\right)^{-\frac{d+2\gamma-\theta}{2}}=2^{\theta}\frac{\Gamma(\frac{d+2\gamma+\theta}{2})}{\Gamma(\frac{d+2\gamma-\theta}{2})}
t^{\theta}\left(t^2+\|\cdot\|^2\right)^{-\frac{d+2\gamma+\theta}{2}}\quad \quad \text{in}\quad L^2_k(\R^d).
$$
\end{Remark}

\begin{Cor} Let $0<\theta<\frac{d+2\gamma}{2}$. Then the $\frac{\theta}{2}$-fractional Poisson integral of any $f\in H_k^{\theta}(\R^d)$  takes the following forms
\begin{align}
\mathcal{P}_{k,\frac{\theta}{2}}[f](t,x)&\label{Poissin integral-t-pot}=\mathcal{I}_{k,\theta}[(-\Delta_k)^{\theta/2}f](t,x)\\
\label{Extension problem solution2}&=
\frac{1}{\Gamma(\frac{\theta}{2})}\int_0^{\infty}e^{s\Delta_k}(-\Delta_k)^{\theta/2}f(x)e^{-\frac{t^2}{4s}}s^{\frac{\theta}{2}-1}ds.
\end{align}
Here $\mathcal{I}_{k,\theta}[f](t,\cdot)$ is the $(t,\theta)$-Dunkl-Riesz potential of $f$ defined by
\begin{equation}\label{t-potential}
\mathcal{I}_{k,\theta}[f](t,x):=\int_{\R^d}R_{k,\theta}(t,x,y)f(y)\omega_k(y)dy.
\end{equation}
\end{Cor}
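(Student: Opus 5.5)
The plan is to derive the corollary directly from the preceding Proposition, that is, from identity (\ref{poisson kernel-Riesz kernel}), combined with the symmetry of the fractional Dunkl Laplacian in $L^2_k(\R^d)$.

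Fix $t>0$, $x\in\R^d$ and $f\in H_k^{\theta}(\R^d)$. The kernel $\mathcal{P}_{k,\theta/2}(t,x,\cdot)$ is real-valued and lies in $L^2_k(\R^d)$. Hence we may write
$$
\mathcal{P}_{k,\frac{\theta}{2}}[f](t,x)={\prs{f}{\mathcal{P}_{k,\frac{\theta}{2}}(t,x,\cdot)}}_{L^2_k(\R^d)}.
$$
Since $\theta<\frac{d+2\gamma}{2}$, the preceding Proposition applies. It gives $R_{k,\theta}(t,x,\cdot)\in H^{\theta}_k(\R^d)$ and $\mathcal{P}_{k,\theta/2}(t,x,\cdot)=(-\Delta_k)^{\theta/2}R_{k,\theta}(t,x,\cdot)$ in $L^2_k(\R^d)$.

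We then apply the symmetry property (\ref{symmetry}) with exponent $\theta/2$. Both $f$ and $R_{k,\theta}(t,x,\cdot)$ lie in the domain $H^{\theta}_k(\R^d)$ of $(-\Delta_k)^{\theta/2}$. The operator $(-\Delta_k)^{\theta/2}$ maps real functions to real functions, because the symbol $\|\xi\|^\theta$ is real and even. Alternatively, one can check symmetry directly through Plancherel: both sides equal $c_k^{-2}\int\|\xi\|^\theta\mathcal{F}_k(f)\overline{\mathcal{F}_k(R)}\omega_k$. Symmetry transfers the operator onto $f$, which yields
$$
\mathcal{P}_{k,\frac{\theta}{2}}[f](t,x)={\prs{(-\Delta_k)^{\theta/2}f}{R_{k,\theta}(t,x,\cdot)}}_{L^2_k(\R^d)}=\mathcal{I}_{k,\theta}[(-\Delta_k)^{\theta/2}f](t,x).
$$
This is (\ref{Poissin integral-t-pot}). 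One point needs care: when $\theta\geq 1$, the exponent $\theta/2$ may leave the range $(0,1)$ in which (\ref{symmetry}) was stated. The Plancherel computation avoids this issue, since it only uses (\ref{generateur infinitesimal L2}), or its obvious extension to $\alpha\ge1$ through Remark \ref{Remark-S}.

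For (\ref{Extension problem solution2}), set $g:=(-\Delta_k)^{\theta/2}f$, which belongs to $L^2_k(\R^d)$. We then invoke formula (\ref{t-potential heat sgp}) from the proof of the preceding Proposition, which holds for every $g\in L^2_k(\R^d)$ and every $\theta\in(0,d+2\gamma)$:
$$
{\prs{R_{k,\theta}(t,x,\cdot)}{\bar g}}_{L^2_k(\R^d)}=\frac{1}{\Gamma(\theta/2)}\int_0^\infty e^{s\Delta_k}g(x)e^{-\frac{t^2}{4s}}s^{\frac{\theta}{2}-1}ds.
$$
It should be applied to $\bar g$ so that no conjugation remains, or the real and imaginary parts can be split. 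The left side is exactly $\mathcal{I}_{k,\theta}[g](t,x)$. The absolute convergence needed for Fubini was already established there via Cauchy--Schwarz and the bound $\|p_k(s,x,\cdot)\|_{L^2_k}\lesssim s^{-(d+2\gamma)/4}$, and the resulting $s$-integral converges because of the factor $e^{-t^2/4s}$ at $0$ and because $\theta<\frac{d+2\gamma}{2}$ at infinity.

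The main obstacle is bookkeeping rather than substance: justifying the symmetry step for general $\theta/2$ (including $\theta/2\ge 1$) via Plancherel, and handling complex conjugation in the inner products correctly. Everything else is a direct substitution of results already proved.
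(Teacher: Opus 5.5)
Your proposal is correct and follows the paper's own proof: the first identity comes from (\ref{poisson kernel-Riesz kernel}) combined with the symmetry of $(-\Delta_k)^{\theta/2}$ on $H^{\theta}_k(\R^d)$, and the second from the Fubini identity (\ref{t-potential heat sgp}) applied to $(-\Delta_k)^{\theta/2}f$. Your extra care on the points the paper passes over (conjugation, and $\theta/2\geq1$ handled via Plancherel) is sound. One correction: the Cauchy--Schwarz bound $\|p_k(s,x,\cdot)\|_{L^2_k}\lesssim s^{-(d+2\gamma)/4}$ justifies (\ref{t-potential heat sgp}) for all $g\in L^2_k(\R^d)$ only when $\theta<\frac{d+2\gamma}{2}$, not for every $\theta\in(0,d+2\gamma)$ as you first assert following the paper. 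Your own convergence check uses exactly this restriction, and it is precisely the corollary's hypothesis.
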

\noindent \emph{Proof:} Fix $f\in H_k^{\theta}(\R^d)$. From (\ref{t-potential heat sgp}), we get
$$
\mathcal{I}_{k,\theta}[(-\Delta_k)^{\theta/2}f](t,x)
=\frac{1}{\Gamma(\theta/2)}\int_0^{\infty}e^{s\Delta_k}(-\Delta_k)^{\theta/2}f(x)e^{-\frac{t^2}{4s}}s^{\frac{\theta}{2}-1}ds.
$$
Hence, relation (\ref{Poissin integral-t-pot}) is a direct consequence of (\ref{poisson kernel-Riesz kernel}) and the symmetry property of the fractional Dunkl Laplacian. Moreover, combining  (\ref{t-potential heat sgp}) and (\ref{Poissin integral-t-pot}), we obtain  (\ref{Extension problem solution2}).

\fd
\begin{Remark}
\emph{The usual counterpart of formula (\ref{Extension problem solution2}) was obtained by Stinga \cite{Stinga user} using a different approach.}
\end{Remark}

\subsection{$\Delta_k$-extension problem}
The following  result  states a Caffarelli-Silvestre type relation for the fractional Dunkl Laplacian:
\begin{Thm}\label{Thm ExtP} Let $0<\alpha<1$ and $f\in L^2_k(\R^d)$ be fixed.
\begin{description}[labelsep=0.5em,left=0em]
\item[1)] The function $(t,x)\mapsto \mathcal{P}_{k,\alpha}[f](t,x)$ from  (\ref{frac Poisson integral}) satisfies  the $\Delta_k$-extension problem:
\begin{equation}\label{extension problem}
\begin{cases}
\partial_t^2u(t,x)+\frac{1-2\alpha}{t}\partial_tu(t,x)+\Delta_{k}u(t,x)=0, & \hbox{in}\ (0,+\infty)\times\R^d \\
\lim_{t\to0}u(t,\cdot)=f, & \hbox{in}\quad  L^2_k(\R^d).
\end{cases}
\end{equation}
\item[2)] If $f\in H_k^{2\alpha}(\R^d)$, then the following equalities hold in $L^2_k(\R^d)$:
\begin{align}
\begin{split}\label{Frac Lap-EP 2}
(-\Delta_k)^{\alpha}f(x)&=\lim_{t\to0}\frac{4^{\alpha}\Gamma(\alpha)}{\Gamma(-\alpha)}
t^{-2\alpha}\big(\mathcal{P}_{k,\alpha}[f](t,x)-f(x)\big)
\end{split}\\
\begin{split}\label{Frac Lap-Extension problem 1}
&=\lim_{t\to0}\frac{4^{\alpha}\Gamma(\alpha)}{2\alpha \Gamma(-\alpha)}t^{1-2\alpha}
\partial_t\mathcal{P}_{k,\alpha}[f](t,x)
\end{split}\\
\begin{split}\label{CS new form}
&=\lim_{t\to0}\frac{4^{\alpha}\Gamma(\alpha)}{2\alpha \Gamma(-\alpha)}t^{1-2\alpha}
\partial_tI_{k,2\alpha}[(-\Delta_k)^{\alpha}f](t,x),
\end{split}
\end{align}
where $\mathcal{I}_{k,\theta}[f](t,x)$ is the $(t,\theta)$-Dunkl-Riesz integral transform of $f$ defined in (\ref{t-potential}).
\end{description}
\end{Thm}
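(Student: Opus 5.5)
The whole argument will run on the Dunkl transform side, where everything reduces to a scalar ODE in $t$ for each fixed frequency. By (\ref{EP-PoissonK-DT}), (\ref{Poisson integral convolution}) and (\ref{Dunkl trans convolution}), for $f\in L^2_k(\R^d)$ we have
$$
\mathcal{F}_k\big(\mathcal{P}_{k,\alpha}[f](t,\cdot)\big)(\xi)=\phi_\alpha(t\|\xi\|)\,\mathcal{F}_k(f)(\xi),\qquad
\phi_\alpha(r):=\frac{r^{2\alpha}}{4^{\alpha}\Gamma(\alpha)}\int_0^{\infty}e^{-s}e^{-\frac{r^2}{4s}}\frac{ds}{s^{\alpha+1}},
$$
after the change of variables $s\mapsto s/\|\xi\|^2$. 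The first step is to record the properties of $\phi_\alpha$. Writing $\phi_\alpha(r)=\frac{2^{1-\alpha}}{\Gamma(\alpha)}r^{\alpha}K_\alpha(r)$ in terms of the Macdonald function, we get:
\begin{itemize}
\item $0<\phi_\alpha\le 1$, by (\ref{probability measure});
\item $\phi_\alpha(0^+)=1$;
\item $\phi_\alpha$ solves the Bessel-type equation $\phi''+\frac{1-2\alpha}{r}\phi'-\phi=0$, which can also be checked directly by differentiating under the integral sign;
\item $\phi_\alpha$ decays exponentially at infinity.
\end{itemize}
From the small-$r$ expansion of $K_\alpha$ (equivalently, by inserting (\ref{lambda puissance}) into the integral), we also get
$$
\phi_\alpha(r)=1+\frac{\Gamma(-\alpha)}{4^{\alpha}\Gamma(\alpha)}r^{2\alpha}+O(r^2),\qquad r\to0 .
$$

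\textbf{Part 1).} Set $u(t,x)=\mathcal{P}_{k,\alpha}[f](t,x)$. I would use the inversion formula to write
$$
u(t,x)=c_k^{-2}\int \phi_\alpha(t\|\xi\|)\mathcal{F}_k(f)(\xi)E_k(ix,\xi)\omega_k(\xi)d\xi .
$$
This integral converges absolutely for $t>0$, because $\phi_\alpha(t\|\xi\|)$ decays exponentially and $\mathcal{F}_k(f)\in L^2_k(\R^d)$. The same exponential decay dominates any polynomial factor $\|\xi\|^m$, and $|E_k(ix,\xi)|\le1$ together with the derivative estimate (\ref{Dunkl kernel inequality2}) then justifies differentiating under the integral in $t$ and applying $\Delta_k$ in $x$, which acts as $\Delta_kE_k(ix,\cdot)(\xi)=-\|\xi\|^2E_k(ix,\xi)$. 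The ODE for $\phi_\alpha$, scaled by $\|\xi\|$, gives
$$
\partial_t^2\phi_\alpha(t\|\xi\|)+\frac{1-2\alpha}{t}\,\partial_t\phi_\alpha(t\|\xi\|)-\|\xi\|^2\phi_\alpha(t\|\xi\|)=0,
$$
and this is exactly the extension equation in (\ref{extension problem}). For the boundary condition, Plancherel's formula gives
$$
\|u(t,\cdot)-f\|_{L^2_k(\R^d)}=c_k^{-1}\big\|(\phi_\alpha(t\|\cdot\|)-1)\mathcal{F}_k(f)\big\|_{L^2_k(\R^d)}\to0
$$
by dominated convergence, since $|\phi_\alpha-1|\le 1$.

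\textbf{Part 2).} Everything is again on the transform side, using (\ref{generateur infinitesimal L2}). For (\ref{Frac Lap-EP 2}), Plancherel's formula reduces the claim to
$$
\Big(\frac{4^{\alpha}\Gamma(\alpha)}{\Gamma(-\alpha)}\,\frac{\phi_\alpha(t\|\xi\|)-1}{t^{2\alpha}}-\|\xi\|^{2\alpha}\Big)\mathcal{F}_k(f)(\xi)\to0\quad\text{in } L^2_k(\R^d).
$$
This holds pointwise by the expansion above. For domination, I need the uniform bound
$$
|\phi_\alpha(r)-1|\lesssim r^{2\alpha}\qquad\text{for all } r>0 .
$$
It follows from $\phi_\alpha(r)-1=\frac{r^{2\alpha}}{4^\alpha\Gamma(\alpha)}\int_0^\infty(e^{-s}-1)e^{-r^2/4s}s^{-\alpha-1}ds$ (using (\ref{probability measure})), whose integral is bounded by $\int_0^\infty|e^{-s}-1|s^{-\alpha-1}ds<\infty$. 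With this bound, $\|\cdot\|^{2\alpha}\mathcal{F}_k(f)\in L^2_k(\R^d)$ is an integrable majorant and dominated convergence applies.

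For (\ref{Frac Lap-Extension problem 1}), I would differentiate under the integral defining $\phi_\alpha$:
$$
r^{1-2\alpha}\phi_\alpha'(r)=\frac{2\alpha}{4^\alpha\Gamma(\alpha)}\int_0^\infty(e^{-s}-1)e^{-r^2/4s}s^{-\alpha-1}ds\;-\;\frac{r^2}{2\cdot4^\alpha\Gamma(\alpha)}\int_0^\infty e^{-s}e^{-r^2/4s}s^{-\alpha-2}ds .
$$
The first term comes from differentiating the prefactor $r^{2\alpha}$, again written via (\ref{probability measure}); the second comes from differentiating $e^{-r^2/4s}$. By (\ref{lambda puissance}), the first term tends to $2\alpha\Gamma(-\alpha)/(4^\alpha\Gamma(\alpha))$ as $r\to0$. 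The second term behaves like $r^{2-2\alpha}\cdot r^{2\alpha-2}$ times a bounded quantity after the substitution $s=r^2\sigma$, so some care is needed. The cleaner route is to use $\frac{d}{dr}(r^\alpha K_\alpha(r))=-r^\alpha K_{\alpha-1}(r)=-r^{\alpha}K_{1-\alpha}(r)$. This gives
$$
r^{1-2\alpha}\phi_\alpha'(r)=-c\,r^{1-\alpha}K_{1-\alpha}(r),
$$
which is bounded on $(0,\infty)$ and converges as $r\to0$ to the constant matching $2\alpha\Gamma(-\alpha)/(4^\alpha\Gamma(\alpha))$. Since $t^{1-2\alpha}\partial_t\phi_\alpha(t\|\xi\|)=\|\xi\|^{2\alpha}\,(r^{1-2\alpha}\phi_\alpha'(r))|_{r=t\|\xi\|}$, the majorant is again $\|\xi\|^{2\alpha}|\mathcal{F}_k(f)(\xi)|$ and dominated convergence closes the argument. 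Differentiating $\mathcal{P}_{k,\alpha}[f]$ in $t$ inside $L^2_k(\R^d)$ is justified by the same bound applied to difference quotients.

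Finally, (\ref{CS new form}) follows from (\ref{Frac Lap-Extension problem 1}) together with Corollary (\ref{Poissin integral-t-pot}) with $\theta=2\alpha$, i.e. $\mathcal{P}_{k,\alpha}[f]=\mathcal{I}_{k,2\alpha}[(-\Delta_k)^{\alpha}f]$. That corollary requires $2\alpha<\frac{d+2\gamma}{2}$; if this fails, I would instead verify the identity directly on the transform side via (\ref{t-Riesz kernel Fourier}). The main obstacle I expect is the uniform control of $r^{1-2\alpha}\phi_\alpha'(r)$ and the exact constant in its limit. I would settle both through the Bessel-function identities, cross-checking the constant against (\ref{lambda puissance}).
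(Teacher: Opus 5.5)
Your proposal is correct. For (\ref{Frac Lap-EP 2}) it is essentially the paper's argument: the transform-side multiplier, (\ref{lambda puissance}), and dominated convergence. For Part 1) and for (\ref{Frac Lap-Extension problem 1}), however, you take a different route.

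\textbf{What the paper does.} It never uses Bessel functions. It writes
$\mathcal{P}_{k,\alpha}[f](t,x)=\frac{t^{2\alpha}}{4^{\alpha}\Gamma(\alpha)}\int_0^\infty e^{s\Delta_k}f(x)\,e^{-t^2/4s}s^{-1-\alpha}ds$. It then uses $\Delta_k e^{s\Delta_k}f=\partial_s e^{s\Delta_k}f$ and integration by parts in $s$ to express $\Delta_k\mathcal{P}_{k,\alpha}[f]$, $\partial_t\mathcal{P}_{k,\alpha}[f]$ and $\partial_t^2\mathcal{P}_{k,\alpha}[f]$ through $\mathcal{P}_{k,\alpha}[f]$, $\mathcal{P}_{k,\alpha+1}[f]$, $\mathcal{P}_{k,\alpha+2}[f]$, whose coefficients then cancel. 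The boundary value comes from Jensen's inequality and strong continuity of $e^{s\Delta_k}$, not from Plancherel.

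For (\ref{Frac Lap-Extension problem 1}) it uses $\partial_t\mathcal{P}_{k,\alpha}=\frac{2\alpha}{t}(\mathcal{P}_{k,\alpha}-\mathcal{P}_{k,\alpha+1})$ and one more integration by parts. This gives the multiplier $t^{1-2\alpha}\partial_t\phi_\alpha(t\|\xi\|)=-\frac{2\alpha\|\xi\|^2}{4^\alpha\Gamma(\alpha+1)}\int_0^\infty e^{-t^2/4s}e^{-s\|\xi\|^2}s^{-\alpha}ds$, which is precisely your identity $(r^\alpha K_\alpha)'=-r^\alpha K_{1-\alpha}$ in integral form. Dropping $e^{-t^2/4s}$ bounds it by $\frac{2\Gamma(1-\alpha)}{4^\alpha\Gamma(\alpha)}\|\xi\|^{2\alpha}$, and the limit follows by monotone convergence. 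So the step you found delicate has an elementary resolution.

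\textbf{A slip in the abandoned display.} Your two-term formula for $r^{1-2\alpha}\phi_\alpha'(r)$ is off by $2\alpha r^{-2\alpha}$. You inserted $e^{-s}-1$ in the first integral without compensating, which is why the second term appeared to blow up. Since you discard this route, it does not affect your proof.

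\textbf{Comparison.} Your route gives a transparent reduction to the scalar ODE $\phi''+\frac{1-2\alpha}{r}\phi'-\phi=0$, as in Caffarelli--Silvestre. It also turns every convergence claim into a one-line Plancherel/dominated-convergence argument. Your constants check out: $c=2^{1-\alpha}/\Gamma(\alpha)$ and $r^{1-\alpha}K_{1-\alpha}(r)\to 2^{-\alpha}\Gamma(1-\alpha)$ give $2\alpha\Gamma(-\alpha)/(4^\alpha\Gamma(\alpha))$. The paper's route avoids special functions. Its core computation in Part 1) lives on the heat-semigroup side, with the transform used only for smoothness and bounds of $e^{s\Delta_k}f$. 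It therefore adapts more readily to non-$L^2$ data (e.g.\ $L^p_k$), where Plancherel is unavailable.

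\textbf{On (\ref{CS new form}).} You are right that obtaining it from the Corollary requires $4\alpha<d+2\gamma$. Your fallback via (\ref{t-Riesz kernel Fourier}) does not remove this restriction, because that formula was itself derived assuming $R_{k,2\alpha}(t,x,\cdot)\in L^2_k(\R^d)$. Outside that range, (\ref{CS new form}) only makes sense with $\mathcal{I}_{k,2\alpha}[\cdot](t,\cdot)$ read as a Dunkl multiplier. The paper's proof carries the same implicit restriction, so this is a caveat on the statement rather than a gap in your argument.
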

\emph{Proof:}
 \textbf{1)} Let $s>0$. By virtue of Dunkl convolution property (\ref{Dunkl trans convolution}) and the fact that  $f\in L^2_k(\R^d)$, we have
\begin{equation*}\label{heat semigp L2}
e^{s\Delta_k}f(x)=p_k(s,0,.)\ast_k f(x)=c_k^{-2}\int_{\R^d}e^{-s\|\xi\|^2}\mathcal{F}_k(f)(\xi)E_k(-ix,\xi)\omega_k(\xi)d\xi.
\end{equation*}
Then,  by using  estimate (\ref{Dunkl kernel inequality2}) for the Dunkl kernel and  differentiating under the  integral sign, we conclude that $e^{s\Delta_k}f$ is of class  $C^{\infty}$ on $\R^d$. Moreover, the Cauchy-Schwarz inequality and Plancherel's formula for the Dunkl transform $\mathcal{F}_k$ imply that
\begin{align*}
\forall\ s>0,\ \forall\ x\in \R^d,\quad \quad |e^{s\Delta_k}f(x)|\lesssim s^{-\frac{d}{2}-\gamma}\|f\|_{L^2_k(\R^d)}.
\end{align*}
Therefore, we can apply Fubini's theorem to express the $\alpha$-Poisson integral of $f$ (\ref{frac Poisson integral}) as
$$
\mathcal{P}_{k,\alpha}[f](t,x)=\frac{t^{2\alpha}}{4^{\alpha}\Gamma(\alpha)}
\int_0^{\infty}e^{s\Delta_k}f(x)e^{-\frac{t^2}{4s}}\frac{ds}{s^{1+\alpha}}.
$$
From this formula, we see that the function $\mathcal{P}_{k,\alpha}[f](t,\cdot)$ is of class $C^{\infty}$ on $\R^d$. In addition, the heat equation  $\Delta_ke^{s\Delta_k}f=\partial_se^{s\Delta_k}f$,  along with  an integration by parts,   yields
\begin{align*}
\Delta_k\mathcal{P}_{k,\alpha}[f](t,x)
&=\frac{4\alpha(\alpha+1)}{t^2}\Big(\mathcal{P}_{k,\alpha+1}[f](t,x)-\mathcal{P}_{k,\alpha+2}[f](t,x)\Big).
\end{align*}
Similarly, for each fixed $x\in\R^d$, the function $t\mapsto \mathcal{P}_{k,\alpha}[f](t,x)$ is of class $C^{\infty}$ on $(0,+\infty)$, and
\begin{equation}\label{Poisson Integral der t}
\partial_t\mathcal{P}_{k,\alpha}[f](t,x)=\frac{2\alpha}{t}\Big(\mathcal{P}_{k,\alpha}[f](t,x)-\mathcal{P}_{k,\alpha+1}[f](t,x)\Big).
\end{equation}
A second differentiation in $t$ leads to
\begin{align*}
\partial_t^2\mathcal{P}_{k,\alpha}[f](t,x)=\frac{2\alpha}{t^2}\Big((2\alpha-1)\mathcal{P}_{k,\alpha}[f](t,x)-
(4\alpha+1)\mathcal{P}_{k,\alpha+1}[f](t,x)+2(\alpha+1)\mathcal{P}_{k,\alpha+2}[f](t,x)\Big).
\end{align*}
These relations establish  that the function $\mathcal{P}_{k,\alpha}[f](t,x)$ satisfies the partial differential equation in (\ref{extension problem}).\\
On the other hand, in view of relation (\ref{probability measure}) and the Jensen inequality, we have
\begin{align*}
\|\mathcal{P}_{k,\alpha}[f](t,\cdot)-f\|^2_{L^2_k(\R^d)}&\leq \frac{t^{2\alpha}}{4^{\alpha}\Gamma(\alpha)}\int_0^{\infty} \|e^{s\Delta_k}f-f\|^2_{L^2_k(\R^d)}e^{-\frac{t^2}{4s}}\frac{ds}{s^{\alpha+1}}\\
&= \frac{1}{\Gamma(\alpha)}\int_0^{\infty} \|e^{\frac{t^2}{4r}\Delta_k}f-f\|^2_{L^2_k(\R^d)}e^{-r}r^{\alpha-1}dr.
\end{align*}
Accordingly, the desired equality $\lim_{t\to 0}\mathcal{P}_{k,\alpha}[f](t,\cdot)=f$ in $L^2_k(\R^d)$ follows the following property of the Dunkl heat semigroup \cite{Rosler4}
$$
\lim_{t\to 0}\|e^{\frac{t^2}{4\theta}\Delta_k}f-f\|^2_{L^2_k(\R^d)}=0.
$$
\noindent\textbf{2)} First,  observe that the $\alpha$-Poisson integral transform  $\mathcal{P}_{k,\alpha}[\cdot](t,\cdot)$ leaves the space $L^2_k(\R^d)$ invariant, with
$$
\left\|\mathcal{P}_{k,\alpha}[f](t,\cdot)\right\|_{L^2_k(\R^d)}\leq \|f\|_{L^2_k(\R^d)},\quad \quad \forall\ f\in L^2_k(\R^d).
$$
Therefore, using expression (\ref{Poisson integral convolution}), formula(\ref{EP-PoissonK-DT}) for $\mathcal{F}_k(\mathcal{P}_{k,\alpha}(t,x,\cdot))$  and  (\ref{probability measure}) together with the properties of the Dunkl transform, we obtain
\begin{align}\label{good R}
\mathcal{F}_k\big(\mathcal{P}_{k,\alpha}[f](t,\cdot)-f\big)&=\mathcal{F}_k(f)\frac{t^{2\alpha}}{4^{\alpha}\Gamma(\alpha)}
\int_0^{\infty}\big(e^{-s\|\cdot\|^2}-1\big)e^{-\frac{t^2}{4s}}\frac{ds}{s^{1+\alpha}}\quad \text{in} \quad L^2_k(\R^d).
\end{align}
Now, taking into account the fact that $f\in H_k^{2\alpha}(\R^d)$, applying the dominated convergence theorem and using relations (\ref{generateur infinitesimal L2}) and (\ref{lambda puissance}), we conclude that
$$
\lim_{t\to0}t^{-2\alpha}\mathcal{F}_k\big(\mathcal{P}_{k,\alpha}[f](t,\cdot)-f\big)=\frac{\Gamma(-\alpha)}{4^{\alpha}\Gamma(\alpha)}\|\cdot\|^{2\alpha}\mathcal{F}_k(f)
=\frac{\Gamma(-\alpha)}{4^{\alpha}\Gamma(\alpha)}\mathcal{F}_k\big((-\Delta_k)^{\alpha}f\big)\quad \text{in} \quad L^2_k(\R^d).
$$
Finally, using Plancherel's theorem for the Dunkl transform, we obtain (\ref{Frac Lap-EP 2}). \\
Next, using the  time derivative formula (\ref{Poisson Integral der t}), identity (\ref{good R}) and integrating  by parts and recalling that $\Gamma(-\alpha):=-\Gamma(1-\alpha)/\alpha$, we arrive at the following equalities in $L^2_k(\R^d)$:
\begin{align*}
\lim_{t\to0}t^{1-2\alpha}\mathcal{F}_k\big(\partial_t\mathcal{P}_{k,\alpha}[f](t,\cdot)\big)&=2\alpha \lim_{t\to0}t^{-2\alpha}\Big(\mathcal{F}_k\big(\mathcal{P}_{k,\alpha}[f](t,\cdot)-\mathcal{F}_k\big(\mathcal{P}_{k,\alpha+1}[f](t,\cdot)\big)\Big)\\
&=2\alpha \lim_{t\to0}t^{-2\alpha}\Big(\mathcal{F}_k\big(\mathcal{P}_{k,\alpha}[f](t,\cdot)-f\big)+\mathcal{F}_k\big(f-\mathcal{P}_{k,\alpha+1}[f](t,\cdot)\big)\Big)\\
&=-2\alpha \mathcal{F}_k(f)\lim_{t\to0}\frac{\|\xi\|^2}{4^{\alpha}\Gamma(\alpha+1)}\int_0^{\infty}e^{-\frac{t^2}{4s}}e^{-s\|\xi\|^2}\frac{ds}{s^{\alpha}}\\
&=2\alpha\frac{\Gamma(-\alpha)}{4^{\alpha}\Gamma(\alpha)}\|\cdot\|^{2\alpha}\mathcal{F}_k(f),
\end{align*}
which establishes (\ref{Frac Lap-Extension problem 1}).
Finally, we obtain (\ref{CS new form}) from (\ref{Poissin integral-t-pot}) and  the relationship (\ref{Frac Lap-Extension problem 1}), which states that  $\mathcal{P}_{k,\alpha}[\cdot]=I_{k,2\alpha}(-\Delta_k)^{\alpha}$.

\fd

The following result shows that the Dunkl kernel $E_k(i\xi,\cdot)$, which is bounded and smooth on $\R^d$, also satisfies the Caffarelli-Silvestre type relation (\ref{Frac Lap-EP 2})
\begin{Prop}
For any $x, \xi\in\R^d$, we have
\begin{equation}\label{frac lap Dunkl kernel}
\begin{split}
(-\Delta_k)^{\alpha}\big(E_k(i\xi,\cdot)\big)(x)&=\|\xi\|^{2\alpha}E_k(i\xi,x)\\
&=\lim_{t\to0}\frac{4^{\alpha}\Gamma(\alpha)}{\Gamma(-\alpha)}
t^{-2\alpha}\big(\mathcal{P}_{k,\alpha}\big[E_k(i\xi,\cdot)](t,x)-E_k(i\xi,x)\big).
\end{split}
\end{equation}
\end{Prop}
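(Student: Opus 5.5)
The first equality in (\ref{frac lap Dunkl kernel}) is exactly (\ref{Dunkl kerenl-Lap}), so only the Caffarelli--Silvestre limit needs proof. The plan is to compute $\mathcal{P}_{k,\alpha}\big[E_k(i\xi,\cdot)\big](t,x)$ in closed form and then pass to the limit using (\ref{lambda puissance}).

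\textbf{Step 1: the Poisson integral of the Dunkl kernel.} Start from the heat kernel identity $e^{s\Delta_k}\big(E_k(i\xi,\cdot)\big)(x)=\mathcal{F}_k\big(p_k(s,x,\cdot)\big)(-\xi)=e^{-s\|\xi\|^2}E_k(i\xi,x)$ from Appendix A.3; it is the $\alpha=1$ analogue of the computation used in the proof of (\ref{Dunkl kerenl-Lap}). Since $|E_k(i\xi,y)|\le 1$ and $p_k\ge0$ has unit mass, Fubini applies in (\ref{frac Poisson integral}) combined with (\ref{frac Poisson kernel}), giving
$$
\mathcal{P}_{k,\alpha}\big[E_k(i\xi,\cdot)\big](t,x)=E_k(i\xi,x)\,\frac{t^{2\alpha}}{4^{\alpha}\Gamma(\alpha)}\int_0^{\infty}e^{-s\|\xi\|^2}e^{-\frac{t^2}{4s}}\frac{ds}{s^{\alpha+1}}.
$$
This is the same computation as (\ref{EP-PoissonK-DT}) with $\xi$ replaced by $-\xi$.

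\textbf{Step 2: subtract the limit value.} Using (\ref{probability measure}) to write $E_k(i\xi,x)$ as the same integral without the factor $e^{-s\|\xi\|^2}$, we get
$$
t^{-2\alpha}\Big(\mathcal{P}_{k,\alpha}\big[E_k(i\xi,\cdot)\big](t,x)-E_k(i\xi,x)\Big)=\frac{E_k(i\xi,x)}{4^{\alpha}\Gamma(\alpha)}\int_0^{\infty}\big(e^{-s\|\xi\|^2}-1\big)e^{-\frac{t^2}{4s}}\frac{ds}{s^{\alpha+1}}.
$$

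\textbf{Step 3: let $t\to0$.} The integrand is dominated in absolute value by $\big|e^{-s\|\xi\|^2}-1\big|s^{-\alpha-1}$. This is integrable on $(0,\infty)$ because it behaves like $\|\xi\|^2 s^{-\alpha}$ near $0$ and like $s^{-\alpha-1}$ near $\infty$, with $0<\alpha<1$. The factor $e^{-t^2/(4s)}$ increases to $1$ as $t\to0$, so dominated convergence applies. By (\ref{lambda puissance}) with $\lambda=\|\xi\|^2$, the limit integral equals $\Gamma(-\alpha)\|\xi\|^{2\alpha}$. For $\xi=0$ both sides vanish trivially. Multiplying by $4^{\alpha}\Gamma(\alpha)/\Gamma(-\alpha)$ then yields $\|\xi\|^{2\alpha}E_k(i\xi,x)$, pointwise in $x$. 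Since $|E_k|\le1$, the convergence is even uniform in $x$.

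The only point needing any care is the justification of Fubini in Step 1. Because $E_k(i\xi,\cdot)$ is not in $L^2_k(\R^d)$, Theorem \ref{Thm ExtP} cannot be invoked directly. However, the integrand is absolutely integrable thanks to the positivity of $p_k$, the bound $|E_k(i\xi,\cdot)|\le 1$, and $\|p_k(s,x,\cdot)\|_{L^1_k(\R^d)}=1$, so I expect no real obstacle.
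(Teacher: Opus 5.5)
Your proposal is correct and follows essentially the same route as the paper. Both arguments write $\mathcal{P}_{k,\alpha}[E_k(i\xi,\cdot)](t,x)$ as $E_k(i\xi,x)$ times the scalar subordination integral, subtract via (\ref{probability measure}), pass to the limit using (\ref{lambda puissance}), and cite (\ref{Dunkl kerenl-Lap}) for the first equality. The differences are only cosmetic: you use dominated rather than monotone convergence, and you spell out the Fubini justification that the paper leaves implicit in its appeal to (\ref{EP-PoissonK-DT}).
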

\emph{Proof:}  According to relation  (\ref{EP-PoissonK-DT}),   the $\alpha$-Poisson transform of the function $E_k(i\xi,\cdot)$ is given by
\begin{align*}
\mathcal{P}_{k,\alpha}[E_k(i\xi,\cdot)](t,x)=E_k(i\xi,x)\frac{t^{2\alpha}}{4^{\alpha}\Gamma(\alpha)}\int_0^{\infty}e^{-s\|\xi\|^2}e^{-t^2/{4s}}\frac{ds}{s^{\alpha+1}}.
\end{align*}
Therefore
\begin{align*}
&\lim_{t\to0}\frac{4^{\alpha}\Gamma(\alpha)}{\Gamma(-\alpha)}t^{-2\alpha}\Big(\mathcal{P}_{k,\alpha}[E_k(i\xi,\cdot)](t,x)-E_k(i\xi,x)\Big)\\
&=\lim_{t\to0}\frac{E_k(i\xi,x)}{\Gamma(-\alpha)}\int_0^{\infty}\big(e^{-s\|\xi\|^2}-1\big)e^{-t^2/{4s}}\frac{ds}{s^{\alpha+1}}\\
&=\|\xi\|^{2\alpha}E_k(i\xi,x)\\
&=(-\Delta_k)^{\alpha}\big(E_k(i\xi,\cdot)\big)(x),
\end{align*}
where we have used  the monotone convergence theorem and (\ref{lambda puissance}) in the third equality, and relation (\ref{Dunkl kerenl-Lap}) in the last equality.

\fd


\appendix
 \section{Appendix}
\subsection{Dunkl kernel and Dunkl transform}
$\bullet$ The Dunkl kernel $E_k$ is a positive and symmetric kernel on $\R^d\times\R^d$, defined by
\begin{equation}\label{Dunkl kernel def}
E_k(x,y):=V_k(e^{\prs{\cdot}{y}})(x)=\int_{\R^d}e^{\prs{z}{y}}d\mu_x(z).
\end{equation}
For $y\in \R^d$, $E_k(\cdot,y)$ is the unique solution of the system
$$
\forall\ \xi\in\R^d, \quad D_{\xi}f(x)=\prs{\xi}{y}f(x)\quad \text{and}\quad f(0)=1.
$$
The Dunkl kernel extends to an analytic function on $\C^d\times\C^d$.  Furthermore, it satisfies the following properties \cite{Dunkl2, dunklxu, Dejeu}
\begin{enumerate}[labelsep=0.5em,left=0em]
\item  For all $\lambda\in \C$, $x,y\in \C^d$ and all $g\in W$, we have
$E_k(\lambda x,y)=E_k(x,\lambda y)$ and $E_k(gx,gy)=E_k(x,y)$.
\item \emph{Boundedness on the imaginary axis:} For all $x,y\in\R^d$, we have
\begin{equation}\label{Dunkl kernel inequality1}
|E_k(-ix,y)|\leq 1.
\end{equation}
\item \emph{Estimates for partial derivatives:} For all $x\in \R^d$, all $y\in\C^d$ and all multi-indices $\nu\in\N^d$,
\begin{equation}\label{Dunkl kernel inequality2}
\big|\partial_y^{\nu}E_k(x,y)\big|\leq \|x\|^{|\nu|}\max_{g\in W}e^{Re\prs{gx}{y}}.
\end{equation}
\end{enumerate}
$\bullet$ The Dunkl transform of a function $f\in L^1_k(\R^d)$ is defined by \cite{Dejeu, Rosler4}
\begin{equation}\label{Dunkltransf}
\mathcal{F}_k(f)(\xi):=\int_{\R^d}f(x)E_k(-ix,\xi)\omega_k(x)dx,\quad \xi\in\R^d.
\end{equation}
 The Dunkl transform shares many properties with the usual Euclidean Fourier transform \cite{Dunkl3, Dejeu, Rosler4}. In particular,
\begin{enumerate}[labelsep=0.5em,left=0em]
\item For $f\in \mathcal{S}(\R^d)$ and  $\xi\in\R^d$, we have
\begin{equation*}\label{derivation dunkl transf}
D_{\xi}\mathcal{F}_k(f)=-i\mathcal{F}_k(\prs{\xi}{.}f), \quad  \mathcal{F}_k(D_{\xi} f)=i\prs{\xi}{.}\mathcal{F}_k(f).
\end{equation*}
\item \emph{Isomorphism on the Schwartz space:} $\mathcal{F}_k$ is an isomorphism of $\mathcal{S}(\R^d)$ onto itself and its inverse is given by
\begin{equation}\label{Dunkltransfinverse}
\mathcal{F}_k^{-1}(f)(x)=c_k^{-1}\mathcal{F}_k(f)(-x), x\in\R^d.
\end{equation}
where $c_k$ is the Mehta constant (\ref{Mehta}).
\item \emph{Preservation of radiality:} The Dunkl transform of a radial function is again a radial function.
\item \emph{Injectivity:} The operator $\mathcal{F}_k : L^1_k(\R^d)\longrightarrow \mathcal{C}_0(\R^d)$ is injective.
\item \emph{Inversion formula:} If $f$ and $\mathcal{F}_k(f)$ are in $L^1_k(\R^d)$, then
$
f(x)=c_k^{-2}\mathcal{F}_k\big(\mathcal{F}_k(f)\big)(-x).
$
\item \emph{Plancherel theorem:} $c_k^{-1}\mathcal{F}_k$ extends to an isometric automorphism of $L^2_k(\R^d)$.
\item The Dunkl transform is a topological isomorphism of the space $\mathcal{S}'(\R^d)$ onto itself. For $T\in \mathcal{S}'(\R^d)$, its Dunkl transform is defined by
$$
\prs{\mathcal{F}_k(T)}{\varphi}:=\prs{T}{\mathcal{F}_k(\varphi)}, \quad \varphi\in \mathcal{S}(\R^d).
$$
\end{enumerate}

\subsection{Dunkl translation operators and Dunkl convolution}
By means of the Dunkl intertwining operator and its inverse, the Dunkl translation operators $\tau_x, x\in \R^d$, are defined
on $\mathcal{C}^{\infty}(\R^d)$ by \cite{Trimeche2}
\begin{equation}\label{translation Cinfty}
\forall\ y\in\R^d,\quad \tau_xf(y)=\int_{\R^d}V_k\circ T_z\circ V_k^{-1}(f)(y)d\mu_x(z),
\end{equation}
where $T_x$ is the classical translation operator given by $T_xf(y)=f(x+y)$.\\
The operators $\tau_x$, $x\in \R^d$, satisfy the following properties:
\begin{enumerate}[labelsep=0.5em,left=0em]
\item \emph{Product formula:} for all $x,y,\xi\in \R^d$,
\begin{equation}\label{product formula}
\tau_xE_k(\xi,\cdot)(y)=E_k(\xi,x)E_k(\xi,y).
\end{equation}
\item \emph{Continuity:} For all $x\in\R^d$, the operator $\tau_x$ is continuous from  $\mathcal{C}^{\infty}(\R^d)$ into itself.
\item For all $f\in \mathcal{C}^{\infty}(\R^d)$ and all $x,y\in \R^d$, we have
$\tau_xf(0)=f(x)$ and $\tau_xf(y)=\tau_yf(x)$.
\item  \emph{Commutation with Dunkl operators:}  For all $f\in \mathcal{C}^{\infty}(\R^d)$ and all $x,\xi\in \R^d$: $\tau_x(D_{\xi}f)=D_{\xi}(\tau_xf)$.
\item \emph{Action on radial functions:} If $f=\widetilde{f}(\|\cdot\|)\in \mathcal{C}^{\infty}(\R^d)$ is radial, then according to \cite{Rosler2} we have
  \begin{equation}\label{P6}
\forall\ x\in\R^d,\quad \tau_xf(y)=\int_{\R^d}\widetilde{f}(\sqrt{\|x\|^2+\|y\|^2+2\prs{x}{z}})d\mu_y(z).
\end{equation}
\item \emph{Duality type relation:} Let  $f\in \mathcal{C}^{\infty}(\R^d)$ and $\phi\in \mathcal{D}(\R^d)$.  According to \cite{GalRej} (Proposition 2.1), the following holds:
\begin{equation}\label{Duality-transl}
\int_{\R^d}\tau_xf(y)\phi(y)\omega_k(y)dy=\int_{\R^d}f(y)\tau_{-x}\phi(y)\omega_k(y)dy,\quad \forall\ x\in \R^d.
\end{equation}
\end{enumerate}
On the other hand, when $f\in L^2_k(\R^d)$, $\tau_xf$ is the $L^2_k(\R^d)$-function defined as a Dunkl transform multiplier:
\begin{equation}\label{translation L2}
\mathcal{F}_k(\tau_xf)=E_k(ix,\cdot)\mathcal{F}_k(f).
\end{equation}
If $f\in\mathcal{S}(\R^d)$, then the formulas (\ref{translation Cinfty}) and (\ref{translation L2}) coincide,
$\tau_xf\in\mathcal{S}(\R^d)$ and we have \cite{Trimeche2}:
\begin{equation}\label{translation Schwartz}
\tau_xf(y)=c_k^{-2}\int_{\R^d}\mathcal{F}_k(f)(\xi)E_k(ix,\xi)E_k(iy,\xi)\omega_k(\xi)d\xi,\quad y\in\R^d.
\end{equation}
Notice that this formula remains  valid  whenever the two functions $f$ and $\mathcal{F}_k(f)$ are both in  $L^1_k(\R^d)$.\\
Classically, the Dunkl convolution product of two $L^2_k(\R^d)$-functions $f$ and $g$ is defined by
$$
f\ast_kg(x)=\int_{\R^d}f(y)\tau_xg(-y)\omega_k(y)dy.
$$
Using the Dunkl transform, we have
\begin{equation}\label{Dunkl trans convolution}
f\ast_kg(x)=c_k^{-2}\int_{\R^d}\mathcal{F}_k(f)(\xi)\mathcal{F}_k(g)(\xi)E_k(ix,\xi)\omega_k(\xi)d\xi.
\end{equation}

\subsection{Dunkl heat kernel}
 The Dunkl heat semigroup  is defined  by \cite{Rosler3,  Rosler4}
\begin{equation}\label{heat semigroup def}
e^{t\Delta_k}f(x)=p_k(t,0,\cdot)\ast_kf(x)=\int_{\R^d}p_k(t,x,y)f(y)\omega_k(y)dy, \quad t>0, \ f\in \mathcal{S}(\R^d),
\end{equation}
where $p_k(t,x,y)$ is the $\Delta_k$-heat kernel defined by (\ref{heat kernel 1 Intr}).
This kernel satisfies the following properties:
\begin{enumerate}[labelsep=0.5em,left=0em]
\item \emph{Symmetry:}  For each $t>0$ and $x,y\in \R^d$,  $p_k(t,x,y)=p_k(t,y,x)$.
\item \emph{$L^1_k(\R^d)$-Norm and Dunkl Transform:} For every $t>0$ and $x\in \R^d$, $\|p_k(t,x,\cdot)\|_{L^1_k(\R^d)}=1$ and
\begin{equation}\label{Dunkl trans heat}
\mathcal{F}_k(p_k(t,x,\cdot))(\xi)=e^{-t\|\xi\|^2}E_k(-ix,\xi),\quad\quad  \forall\ \xi\in\R^d.
\end{equation}
In particular, for all $t>0$ and all $x,y\in \R^d$,
\begin{align}
p_k(t,x,y) &=\label{heat Fourier2}c_k^{-2}\int_{\R^d}e^{-t\|\xi\|^2}E_k(-ix,\xi)E_k(iy,\xi)\omega_k(\xi)d\xi.
\end{align}
\item \emph{Semigroup Property:} For all $t,s>0$ and all $x,y\in\R^d$,
\begin{equation}\label{heat semi group prop}
p_k(t+s,x,y)=\int_{\R^d}p_k(t,x,z)p_k(s,y,z)\omega_k(z)dz.
\end{equation}
\end{enumerate}

\bigskip

\bigskip

\noindent \textbf{Conflict of Interest Statement:}  No conflict of interest was reported by the author.

\bigskip

\noindent\textbf{Data Availability Statement:}  The manuscript has no associated data.

\bigskip

\noindent\textbf{Funding Declaration:} The author declares that no funding was received for this research.

\end{document}